\documentclass[11pt]{article}
\usepackage[margin=1in]{geometry}
\usepackage{amsmath,amssymb,amsthm,mathtools}
\usepackage{enumitem}
\usepackage{hyperref}
\usepackage{tikz-cd}
\usepackage{bm}
\usetikzlibrary{arrows.meta}

\theoremstyle{plain}
\newtheorem{theorem}{Theorem}[section]
\newtheorem{lemma}[theorem]{Lemma}
\newtheorem{proposition}[theorem]{Proposition}
\newtheorem{corollary}[theorem]{Corollary}
\theoremstyle{definition}
\newtheorem{definition}[theorem]{Definition}

\newtheorem{example}[theorem]{Example}
\newtheorem{remark}[theorem]{Remark}

\newcommand{\kk}{k}
\newcommand{\Zn}{\mathbb{Z}^n}
\newcommand{\NN}{\mathbb{N}}
\newcommand{\ZZ}{\mathbb{Z}}
\newcommand{\RR}{\mathbb{R}}
\newcommand{\pp}{\mathfrak{p}}
\newcommand{\mm}{\mathfrak{m}}

\newcommand{\link}{\operatorname{lk}}
\newcommand{\del}{\operatorname{del}}
\newcommand{\st}{\operatorname{star}}
\newcommand{\supp}{\operatorname{supp}}

\newcommand{\Ext}{\operatorname{Ext}}
\newcommand{\rank}{\operatorname{rank}}

\newcommand{\rH}{\widetilde{H}}
\newcommand{\Hloc}[2]{H^{#1}_{#2}}

\title{Localized Persistent Commutative Algebra}

\author{Kaiyue He\thanks{Syracuse University \texttt{hekaiyue@hotmail.com}}
  \and  Faisal Suwayyid \thanks{King Fahd University of Petroleum and Minerals \texttt{faisal.suwayyid@kfupm.edu.sa}}
  \and 
  Guo-Wei Wei\thanks{University of Georgia \texttt{Guowei.Wei@uga.edu}}
}

\date{}

\begin{document}
\maketitle

\begin{abstract}
We develop a localized persistent theory of commutative algebra via the local cohomology for Stanley--Reisner rings. 
The theory is modeled on the Suwayyid--Wei persistence of the Stanley--Reisner theory
\cite{SuwayyidWeiPSRT} and its functorial development for graphs and hypergraphs
\cite{SuwayyidWeiGraphs}, in which one persists invariants of the face ring across a filtration, such as its graded Betti numbers and $f$- and $h$-vectors. That framework is built from the minimal free
resolution and is thus Tor-theoretic; we work instead on the injective side, persisting invariants that
arise from local cohomology, and the resulting per-vertex modules record information localized at an
individual vertex, complementing the global picture recorded by maximal-support local cohomology. The
construction rests on an exact, multigraded decomposition (Theorem~\ref{thm:structure}) that reduces
local cohomology at a vertex prime to ordinary (maximal-support) local cohomology of the link and the
deletion of the vertex; combined with Hochster's formula, it yields a closed combinatorial description
of every multigraded piece (Corollary~\ref{cor:static-comb}). At the level of graded dimensions, this decomposition is the vertex-prime case of Rahimi's bigraded local-cohomology formula \cite{Rahimi}. We refine this description by identifying the degree-zero and positive-$x_i$-degree pieces with the
maximal-support local cohomology of the deletion and link, respectively. Building on this structure, we
introduce per-vertex persistent local-cohomology numbers, a persistent links--Hochster formula,
interval decompositions of the resulting reversed-arrow persistence modules, and a bottleneck-stability
theorem. 
\end{abstract}

\tableofcontents

\section{Motivation, setup, and notation}\label{sec:setup}
 
The introduction of persistent Stanley--Reisner theory by Suwayyid and Wei \cite{SuwayyidWeiPSRT} has enabled successful applications of commutative algebra to machine learning, data science, and biology \cite{suwayyid2025cakr, wei2026commutative, zhang2026commutative, feng2025caml, zia2025gbnl}. This framework provides powerful algebraic tools, including persistent facet ideals, persistent $f$- and $h$-vectors, and persistent graded Betti numbers. The connections  between persistent commutative algebra and traditional topological data analysis \cite{carlsson2009topology}  have also been investigated \cite{ren2025interpretability, hu2025commutative}. In this sense, commutative algebra provides a distinctive collection of algebraic and topological tools for data science that is not readily accessible through other mathematical, statistical, physical, or engineering approaches.

However, the Suwayyid--Wei persistence framework for Stanley--Reisner theory primarily captures global algebraic invariants of data, such as the graded Betti numbers, $f$- and $h$ vectors. Therefore is not well suited for local analysis, which is important for wide class of applications in data science. For example, one may wants to understand the function of an individual atom in a molecule or an individual residue in a protein. 

To overcome this limitation, we introduce a localized persistent commutative algebra theory. The theory is motivated by an algebraic technique : localizing the ring at prime ideals. The local behavior of a ring $R$ near a point in $\operatorname{Spec} R$ can be captured by the ring $R_{\pp}$ and the derived functor $\Hloc{\bullet}{\pp}(-)$ from the $\pp$-torsion functor. For a Stanley--Reisner ring this mechanism has a combinatorial
interpretation. Under the standing convention that every singleton $\{i\}$ is a face,
the vertex prime $\pp_i=(x_j:j\neq i)$ contains $I_\Delta$, so it descends to a prime
of $\kk[\Delta]$ with
\[
\kk[\Delta]/\pp_i\kk[\Delta]\;\cong\;\kk[x_i],
\]
the coordinate line through the vertex $i$. The assignment $i\mapsto\pp_i$ therefore
realizes the vertex set of $\Delta$ inside $\operatorname{Spec}\kk[\Delta]$, and
localizing at $\pp_i$ is precisely the algebraic operation of restricting attention to
the vertex $i$. This is the localization we persist.

The similar distinction between global and local invariants also appears in the topological side. Ordinary persistent homology of a filtration is a global summary,
and local homology $H_\bullet(X,X\setminus x)$ was introduced into topological data
analysis for exactly the complementary purpose of probing a space at a chosen point,
in particular for the recovery of stratifications from samples
\cite{BendichCSEHM,BendichWangMukherjee}. For a simplicial complex, local homology at
a vertex is the reduced homology of its link \cite[\S63]{Munkres}, and it is precisely
the link that Theorem~\ref{thm:structure} produces as one of the two summands of
$\Hloc{\bullet}{\pp_i}(\kk[\Delta])$; \S\ref{sec:scope} makes this comparison precise.
The construction of this paper may thus be read in two ways: as the injective-side,
per-vertex counterpart of the Tor-side theory of
\cite{SuwayyidWeiPSRT,SuwayyidWeiGraphs}, and as an algebraic refinement of persistent
local homology in which the deletion summand and the $x_i$-action carry data that the
purely topological invariant does not record.

We follow the conventions of 
 Suwayyid-Wei persistence \cite{SuwayyidWeiPSRT} and its
functorial development for graphs and hypergraphs \cite{SuwayyidWeiGraphs}. Let $\kk$ be a field and
$S=\kk[x_1,\dots,x_n]$ the polynomial ring with its standard $\ZZ$-grading $\deg x_i=1$ and its
finer $\Zn$-grading $\deg x_i=\mathbf e_i$. Let $\Delta$ be an abstract simplicial complex on the
vertex set $[n]=\{1,\dots,n\}$, with the convention that every singleton $\{i\}$ is a face. The
\emph{Stanley--Reisner ideal} is
\[
I_\Delta=\big(x_{i_1}\cdots x_{i_r}:\{i_1,\dots,i_r\}\notin\Delta\big)\subseteq S,
\qquad
\kk[\Delta]=S/I_\Delta .
\]
For $W\subseteq[n]$ write $\Delta_W=\{\sigma\in\Delta:\sigma\subseteq W\}$ for the induced
subcomplex, and $\mathbf 1_W=\sum_{i\in W}\mathbf e_i\in\{0,1\}^n$ for its indicator multidegree.

For a face $\sigma\in\Delta$ we use the three standard local operations:
\begin{align*}
\link_\Delta(\sigma)&=\{\tau\in\Delta:\tau\cap\sigma=\varnothing,\ \tau\cup\sigma\in\Delta\}
&&\text{(link)},\\
\del_\Delta(\sigma)&=\{\tau\in\Delta:\tau\cap\sigma=\varnothing\}
&&\text{(deletion)},\\
\st_\Delta(\sigma)&=\{\tau\in\Delta:\tau\cup\sigma\in\Delta\}
&&\text{(closed star)} .
\end{align*}
We will only need these for a single vertex $\sigma=\{i\}$, abbreviated $\link_\Delta(i)$,
$\del_\Delta(i)$. Note $\del_\Delta(i)$ is the induced subcomplex
$\Delta_{[n]\setminus\{i\}}$, while
$\link_\Delta(i)=\{\tau\subseteq[n]\setminus\{i\}:
\tau\cup\{i\}\in\Delta\}$. Both are complexes whose
vertex sets are contained in $[n]\setminus\{i\}$, and
$\link_\Delta(i)\subseteq\del_\Delta(i)$.

Fix a vertex $i$. We write $S'=\kk[x_j:j\neq i]$ for the polynomial ring in the remaining variables, with maximal
graded ideal $\mm'=(x_j:j\neq i)$. The Stanley--Reisner rings $\kk[\del_\Delta(i)]$ and
$\kk[\link_\Delta(i)]$ are quotients of $S'$. For a finitely generated $\ZZ^m$-graded module $M$
over a polynomial ring and a monomial ideal $\mathfrak a$, $\Hloc{\bullet}{\mathfrak a}(M)$ denotes
local cohomology with support in $\mathfrak a$; we recall its computation by the (stable Koszul,
or \v{C}ech) complex below. Reduced simplicial (co)homology with coefficients in $\kk$ is written
$\rH_\bullet(-;\kk)$, $\rH^\bullet(-;\kk)$; over a field these are dual and have equal dimensions.

\paragraph{The vertex prime.} The object of study is local cohomology supported at the
\emph{vertex prime}
\[
\pp_i:=(x_j:j\neq i)\subseteq S,\qquad \dim S/\pp_i=1,
\]
the defining ideal of the coordinate line through vertex $i$. Note $\pp_i$ is generated by
all variables except $x_i$, and $\pp_i$ is the extension to $S$ of the
maximal ideal $\mm'$ of $S'$; equivalently $S=S'[x_i]$ and $\pp_i=\mm' S$.

\section{Local cohomology at a vertex prime: the static theory}\label{sec:static}

\subsection{The multigraded \v{C}ech complex}\label{ssec:cech}

We recall the standard computation of local cohomology with monomial support in the
$\Zn$-graded form we use. Let $T\subseteq[n]$ and $\mathfrak a_T=(x_j:j\in T)$. The (cohomological)
\v{C}ech complex of an $S$-module $M$ with respect to the generators $\{x_j\}_{j\in T}$ is
\begin{equation}\label{eq:cech}
\check C^\bullet_T(M):\quad
0\to M\to\bigoplus_{j\in T} M_{x_j}\to
\bigoplus_{\substack{F\subseteq T\\|F|=2}} M_{x_F}\to\cdots\to M_{x_T}\to 0,
\end{equation}
with $x_F=\prod_{j\in F}x_j$; the term in cohomological degree $p$ is
$\bigoplus_{F\subseteq T,\,|F|=p}M_{x_F}$. The differential is the alternating sum of the canonical maps between the
localization summands. We use the natural ordering $1<\cdots<n$. For
$F\subseteq T$ and $j\in T\setminus F$, set
\(
\nu_F(j):=\bigl|\{u\in F:u<j\}\bigr|.
\)
On the summand $M_{x_F}$ in cohomological degree $p=|F|$, the differential is
defined by
\[
d^p\left(\frac{m}{x_F^N}\right)
=
\sum_{j\in T\setminus F}
(-1)^{\nu_F(j)}
\frac{m x_j^N}{x_{F\cup\{j\}}^N},
\]
where the term indexed by $j$ is placed in the summand
$M_{x_{F\cup\{j\}}}$ of cohomological degree $p+1$.
Thus each map from $M_{x_F}$ to $M_{x_{F\cup\{j\}}}$ has the fixed sign
$(-1)^{\nu_F(j)}$, while the signs alternate among the subsets obtained by
removing one element from a fixed set $G$. We use the conventions
$x_\varnothing=1$ and $M_{x_\varnothing}=M$. 

Here $M_{x_F}$ denotes the localization of $M$ at the
multiplicative set $\{1,x_F,x_F^2,\dots\}$; equivalently,
$M_{x_F}=M\otimes_S S_{x_F}$. It is classical
\cite[\S3.5]{BrunsHerzog}, \cite[\S A1.4]{Eisenbud} that
$\Hloc{q}{\mathfrak a_T}(M)\cong H^q(\check C^\bullet_T(M))$ for $q\geq0$; for local cohomology with
monomial support specifically, see also \cite{EisenbudMustataStillman}.
When $M$ is $\Zn$-graded and the $x_j$ are homogeneous, each $M_{x_F}$ is $\Zn$-graded and the
differentials are degree-preserving, so $\Hloc{q}{\mathfrak a_T}(M)$ is $\Zn$-graded and may be
computed one multidegree at a time.

For $M=\kk[\Delta]$ the localizations have a transparent monomial description.

\begin{lemma}[Graded pieces of localizations of a Stanley--Reisner ring]\label{lem:localization}
Let $F\subseteq[n]$. For $a\in\Zn$, the monomial $x^a := \prod x_i^{a_i}$ represents a nonzero element of
$\kk[\Delta]_{x_F}$ in multidegree $a$ if and only if
\begin{enumerate}[label=\textup{(\roman*)},nosep]
\item $a_j\geq 0$ for every $j\notin F$, and
\item $\supp_+(a)\cup F\in\Delta$, where $\supp_+(a)=\{j:a_j>0\}$.
\end{enumerate}
In that case $\big(\kk[\Delta]_{x_F}\big)_a=\kk\cdot x^a$, and otherwise it is $0$. In particular
$\kk[\Delta]_{x_F}\neq 0$ iff $F\in\Delta$.
\end{lemma}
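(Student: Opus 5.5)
We prove the statement by describing $\kk[\Delta]_{x_F}$ through its monomial basis. Since $I_\Delta$ is a squarefree monomial ideal, $\kk[\Delta]$ has $\kk$-basis the monomials $x^b$ with $b\in\NN^n$ and $\supp(b)\in\Delta$. Each graded piece is either $\kk\cdot x^b$ or $0$. Localization is exact and commutes with the $\Zn$-grading. So $(\kk[\Delta]_{x_F})_a$ is spanned by the fractions $x^b/x_F^N$ with $b-N\mathbf 1_F=a$. Any such fraction, if nonzero, equals the formal Laurent monomial $x^a$. Hence each graded piece is at most one-dimensional, and it is spanned by $x^a$ exactly when some representative $x^b/x_F^N$ is nonzero.

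Next we characterize when a fraction vanishes. In the localization, $x^b/x_F^N=0$ iff $x_F^M x^b\in I_\Delta$ for some $M\ge0$. Since $I_\Delta$ is squarefree monomial, this holds iff $\supp(b)\cup F\notin\Delta$ when $M\ge1$; we may always take $M\ge1$, because enlarging $M$ can only help. So $x^b/x_F^N\neq0$ iff $\supp(b)\cup F\in\Delta$.

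Now we translate these conditions into conditions on $a$. Suppose a representative with $b=a+N\mathbf 1_F\in\NN^n$ exists. For $j\notin F$ this forces $b_j=a_j\ge0$, which is condition (i). Conversely, if (i) holds, then taking $N$ large makes $b\in\NN^n$, with $b_j>0$ for every $j\in F$. In that case $\supp(b)=\supp_+(a)\cup F$, because for $j\notin F$ we have $b_j>0$ iff $a_j>0$. The only delicate point is to check that the nonvanishing criterion does not depend on the choice of $N$. It does not: for every valid $N$, $\supp(b)\cup F=\supp_+(a)\cup F$, since the coordinates in $F$ are absorbed by the union regardless of whether $b_j$ is zero there. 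Therefore the piece is nonzero iff (i) holds and $\supp_+(a)\cup F\in\Delta$, which is condition (ii). In that case the piece equals $\kk\cdot x^a$.

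Finally, for the last claim: if $F\in\Delta$, take $a=0$. Condition (i) holds trivially, and condition (ii) becomes $F\in\Delta$, so the piece in degree $0$ is nonzero. If $F\notin\Delta$, then $\supp_+(a)\cup F\supseteq F$ is never a face, because $\Delta$ is closed under taking subsets. So every graded piece vanishes, equivalently $x_F\in I_\Delta$ is nilpotent (indeed zero) in $\kk[\Delta]$.

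The main obstacle is purely bookkeeping: checking that the vanishing test is independent of the representative chosen.
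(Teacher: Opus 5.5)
Your proof is correct and follows essentially the same route as the paper. Both arguments represent a class in degree $a$ by $x^{a+N\mathbf 1_F}/x_F^N$ for large $N$, read off condition (i) from the coordinates outside $F$, and identify the relevant support (after taking the union with $F$) with $\supp_+(a)\cup F$ to obtain (ii); your explicit kernel criterion $x_F^Mx^b\in I_\Delta$ just spells out the ``nonzero for large $N$'' step that the paper takes from the direct-limit description.
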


\begin{proof}
This is the standard $\Zn$-graded description of the localizations of a Stanley--Reisner ring; see
\cite[Lemma~5.3.6]{BrunsHerzog}, and \cite[Lemma~1.1]{Rahimi} for the bigraded form used below.
In outline, $\kk[\Delta]_{x_F}=\varinjlim_N x_F^{-N}\kk[\Delta]$ along multiplication by $x_F$, so a class
in multidegree $a$ is represented by $x^{a+N\mathbf 1_F}$ for $N\gg0$ and is nonzero iff
$a+N\mathbf 1_F\in\NN^n$ and $\supp(a+N\mathbf 1_F)\in\Delta$ for large $N$. Adding multiples of
$\mathbf 1_F$ changes no coordinate outside $F$, so the first condition is (i), and
$\supp(a+N\mathbf 1_F)=\supp_+(a)\cup F$ for large $N$ gives (ii). The graded piece is then at most
one-dimensional, spanned by $x^a$.
\end{proof}

Next we give an example illustrating the previous lemma.

\begin{example}\label{ex:localization}
Let $\Delta$ be the path $x_1\!-\!x_2\!-\!x_3$ on $\{1,2,3\}$, with facets $\{1,2\}$ and $\{2,3\}$.
Its only non-faces are $\{1,3\}$ and $\{1,2,3\}$, so $I_\Delta=(x_1x_3)$ and
$\kk[\Delta]=\kk[x_1,x_2,x_3]/(x_1x_3)$. We test when the class of $x^a$ is nonzero in various
localizations; recall the criterion is (i)~$a_j\ge0$ for $j\notin F$, and (ii)~$\supp_+(a)\cup F\in\Delta$.

\emph{Inverting $x_2$ only \textup{(}$F=\{2\}$, and $\{2\}\in\Delta$\textup{)}.} Here $\kk[\Delta]_{x_2}\neq0$.
\begin{itemize}[nosep]
\item $a=(1,0,0)$, i.e.\ $x^a=x_1$: condition (i) holds ($a_1,a_3\ge0$), and $\supp_+(a)\cup F=\{1,2\}\in\Delta$,
so the class is \emph{nonzero}.
\item $a=(1,0,1)$, i.e.\ $x^a=x_1x_3$: condition (i) holds, but $\supp_+(a)\cup F=\{1,2,3\}\notin\Delta$,
so the class is \emph{zero}---consistent with $x_1x_3=0$ in $\kk[\Delta]$.
\item $a=(0,-2,1)$, i.e.\ $x^a=x_3x_2^{-2}$: the negative exponent is on $x_2\in F$, (i) holds,
and $\supp_+(a)\cup F=\{2,3\}\in\Delta$, so the class is \emph{nonzero}.
\end{itemize}

\emph{Inverting $x_1$ $x_3$ \textup{(}$F=\{1,3\}$\textup{)}.} Since $\{1,3\}\notin\Delta$ we have
$x_1x_3=0$ in $\kk[\Delta]$, so we are inverting a nilpotent (indeed zero) element: the localization
$\kk[\Delta]_{x_1x_3}$ is the \emph{zero ring}.

\emph{Inverting $x_1$ $x_2$ \textup{(}$F=\{1,2\}\in\Delta$\textup{)}.} Here $\kk[\Delta]_{x_1x_2}\neq0$.
The class of $x^a$ with $a=(-3,-1,0)$ is nonzero (negatives only on $F$; $\supp_+(a)\cup F=\{1,2\}\in\Delta$),
whereas for $a=(-3,-1,1)$ it is zero, since $\supp_+(a)\cup F=\{1,2,3\}\notin\Delta$. Thus even after
inverting $x_1,x_2$, appending a positive power of $x_3$ kills the class.
\end{example}

\subsection{The splitting along a vertex}\label{ssec:split}

The \v{C}ech complex computing $\Hloc{\bullet}{\pp_i}$ inverts only the variables $x_j$, $j\neq i$;
hence it is a complex of $S'$-modules whose differentials preserve the $x_i$-degree. Write
$T=[n]\setminus\{i\}$, so $\pp_i=\mathfrak a_T$. Every face of $\Delta$ either omits $i$ or contains
$i$, so every monomial of $\kk[\Delta]$ has $x_i$-exponent (the coordinate $a_i$
 of its multidegree $a=(a_1,\dots,a_n)$) equals
 $0$ or $\geq1$, giving the
$\Zn$-graded $\kk$-vector space decomposition
\begin{equation}\label{eq:ring-split}
\kk[\Delta]\;=\;\underbrace{\kk[\del_\Delta(i)]}_{\text{monomials with }a_i=0}\ \oplus\
\underbrace{\bigoplus_{c\geq1} x_i^{\,c}\cdot\kk[\link_\Delta(i)]}_{\text{monomials with }a_i\geq 1}.
\end{equation}

\begin{lemma}[Chain-level splitting of the \v{C}ech complex]\label{lem:chain-split}
Let $T=[n]\setminus\{i\}$. As a complex of $\Zn$-graded $\kk$-vector spaces,
$\check C^\bullet_T(\kk[\Delta])$ splits as a direct sum indexed by the $x_i$-exponent $c\in\ZZ$
(the coordinate $a_i$ of the multidegree):
\[
\check C^\bullet_T(\kk[\Delta])\;=\;\bigoplus_{c\in\ZZ}\ \check C^\bullet_T(\kk[\Delta])^{(c)},
\]
where $\check C^\bullet_T(\kk[\Delta])^{(c)}$ is the $\kk$-span of the basis monomials $x^a$ with
$a_i=c$. Writing $\check C^\bullet_{\mm'}(-)$ for the \v{C}ech complex over $S'$ with respect to
$\mm'=(x_j:j\neq i)$, there are isomorphisms of complexes of $\Zn$-graded $\kk$-vector spaces
\[
\check C^\bullet_T(\kk[\Delta])^{(c)}\;\cong\;
\begin{cases}
\check C^\bullet_{\mm'}\!\big(\kk[\del_\Delta(i)]\big), & c=0,\\[2pt]
x_i^{\,c}\cdot \check C^\bullet_{\mm'}\!\big(\kk[\link_\Delta(i)]\big), & c\geq1,\\[2pt]
0, & c\leq -1 .
\end{cases}
\]
\end{lemma}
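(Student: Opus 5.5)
The plan is to work termwise in the \v{C}ech complex using Lemma~\ref{lem:localization}, and then check that the differentials match. First, the splitting by $c=a_i$. Since $i\notin T$, every $F\subseteq T$ omits $i$. By condition (i) of Lemma~\ref{lem:localization}, every nonzero graded piece of $\kk[\Delta]_{x_F}$ therefore has $a_i\ge 0$. The differential sends $m/x_F^N$ to terms $m x_j^N/x_{F\cup\{j\}}^N$ with $j\ne i$, which is multidegree-preserving and in particular preserves $a_i$. Hence each cohomological term decomposes as $\bigoplus_c(\cdot)^{(c)}$, the differentials respect this decomposition, and the pieces with $c\le -1$ vanish.

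Next I identify the terms for fixed $c\ge 0$ and $F\subseteq T$. By Lemma~\ref{lem:localization}, $(\kk[\Delta]_{x_F})_a$ with $a_i=c$ is $\kk\cdot x^a$ exactly when $a_j\ge0$ for $j\notin F$ and $\supp_+(a)\cup F\in\Delta$. Write $a=(a',c)$ with $a'\in\ZZ^{T}$.
\begin{itemize}[nosep]
\item If $c=0$, then $\supp_+(a)=\supp_+(a')\subseteq T$, so the condition reads $\supp_+(a')\cup F\in\Delta_T=\del_\Delta(i)$.
\item If $c\ge1$, then $\supp_+(a)=\supp_+(a')\cup\{i\}$, so the condition reads $\supp_+(a')\cup F\in\link_\Delta(i)$, using the description $\link_\Delta(i)=\{\tau\subseteq T:\tau\cup\{i\}\in\Delta\}$.
\end{itemize}
Applying Lemma~\ref{lem:localization} again, now over $S'$ to the complexes $\del_\Delta(i)$ and $\link_\Delta(i)$ on vertex set $T$, these are exactly the conditions for $x^{a'}$ to be nonzero in $\kk[\del_\Delta(i)]_{x_F}$, respectively $\kk[\link_\Delta(i)]_{x_F}$. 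The lemma applies verbatim to complexes on $T$ even if some singletons are missing, since the condition is then simply never met.

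So for each $F$ I define the $\kk$-linear map $x^{a'}\mapsto x^{(a',0)}$, respectively $x^{a'}\mapsto x^{(a',c)}=x_i^c x^{a'}$. This is a bijection of monomial bases, so it gives a $\Zn$-graded isomorphism on each summand in each cohomological degree. Here the target $x_i^c\cdot\check C_{\mm'}$ is regraded by shifting $a_i$ by $c$.

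The step needing care is compatibility with the differentials, and this is the main obstacle. Both \v{C}ech complexes are indexed by the same subsets $F\subseteq T$ with the same signs $(-1)^{\nu_F(j)}$ computed in the ordering of $T$ inherited from $[n]$. The component map $M_{x_F}\to M_{x_{F\cup\{j\}}}$ is the canonical localization map, which sends the basis monomial $x^a$ to the same monomial $x^a$, or to $0$ when it dies. Since the identification is the identity on exponents in $T$ and adds the fixed exponent $c$ at $i$, it commutes with each component map. Concretely, $x^{a'}$ dies in $\kk[\del_\Delta(i)]_{x_{F\cup j}}$ iff $x^{(a',0)}$ dies in $\kk[\Delta]_{x_{F\cup j}}$, by the same face-condition equivalence as above, and similarly in the link case.

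I would also note that $\nu_F(j)$ only counts elements of $F\subseteq T$, so the signs agree. This yields the isomorphisms of complexes and, summing over $c$, the claimed splitting, which is consistent with \eqref{eq:ring-split} at cohomological degree $0$.
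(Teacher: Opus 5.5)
Your proposal is correct and follows essentially the same route as the paper. You split by the $x_i$-exponent, which the \v{C}ech differentials preserve because only the $x_j$ with $j\neq i$ are inverted, and you discard $c\le -1$ via condition (i). You then apply Lemma~\ref{lem:localization} on both sides, using $\supp_+(a)=\supp_+(b)$ when $c=0$ and $\supp_+(a)=\supp_+(b)\cup\{i\}$ when $c\ge1$, to match monomial bases, and the resulting bijection commutes with the identical \v{C}ech differentials. Your checks on the signs $(-1)^{\nu_F(j)}$ and on singletons missing from $\link_\Delta(i)$ are details the paper leaves implicit, and they are sound.
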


\begin{proof}
The \v{C}ech complex $\check C^\bullet_T(\kk[\Delta])$ inverts only the variables $x_j$, $j\neq i$, so its
differentials preserve the coordinate $a_i$; this is the bigrading of \cite[\S1]{Rahimi} for the vertex
prime, the first factor being the $x_i$-degree $c$. Hence the complex is the direct sum of its
$x_i$-degree-$c$ subcomplexes $\check C^\bullet_T(\kk[\Delta])^{(c)}$, and by
Lemma~\ref{lem:localization} every basis monomial of every term has $a_i\ge0$, so the summands with
$c\le-1$ are $0$. To identify the remaining pieces, fix $c\ge0$, $b\in\ZZ^{T}$, and $a=(c;b)$: by
Lemma~\ref{lem:localization}, $x^a$ is nonzero in $\kk[\Delta]_{x_F}$ ($F\subseteq T$) iff $b_j\ge0$ for
$j\in T\setminus F$ and $\supp_+(a)\cup F\in\Delta$. Since $\supp_+(a)=\supp_+(b)$ for $c=0$ and
$\supp_+(a)=\supp_+(b)\cup\{i\}$ for $c\ge1$, this reads $\supp_+(b)\cup F\in\del_\Delta(i)$ (for $c=0$),
resp.\ $\supp_+(b)\cup F\in\link_\Delta(i)$ (for $c\ge1$)---which, again by Lemma~\ref{lem:localization}
over $S'$, is exactly the condition for $x^b$ nonzero in $\kk[\del_\Delta(i)]_{x_F}$, resp.\
$\kk[\link_\Delta(i)]_{x_F}$. The bijection $x^a\mapsto x^b$ ($c=0$), resp.\ $x^a\mapsto x_i^{\,c}\cdot x^b$
($c\ge1$), commutes with the identical \v{C}ech differentials, which gives the stated isomorphisms of
complexes.
\end{proof}

\begin{remark}
Decomposition \eqref{eq:ring-split} is the $\Zn$-graded shadow of the classical bigrading of
$\kk[\Delta]$ along a monomial prime used to study $\Hloc{\bullet}{\pp_i}$
\cite{Rahimi}; Lemma~\ref{lem:chain-split} makes the \v{C}ech-level consequence explicit for
the vertex prime. It is parallel to Hochster's decomposition of $\Hloc{\bullet}{\mm}(\kk[\Delta])$
\cite[\S II.4]{StanleyCCA} and the poset/sheaf splittings of Brun--Bruns--R\"omer
\cite{BrunBrunsRomer}.
\end{remark}

\subsection{The structure theorem}\label{ssec:structure}

Taking cohomology in Lemma~\ref{lem:chain-split} gives the central structural result.

\begin{theorem}[Structure of vertex-prime local cohomology]\label{thm:structure}
Let $\Delta$ be a simplicial complex on $[n]$ and $i\in[n]$. For every $q\geq0$ there is an
isomorphism of $\Zn$-graded $S'$-modules
\[
\Hloc{q}{\pp_i}\!\big(\kk[\Delta]\big)\;\cong\;
\underbrace{\Hloc{q}{\mm'}\!\big(\kk[\del_\Delta(i)]\big)}_{x_i\text{-degree }0}\ \oplus\
\Big(\underbrace{\Hloc{q}{\mm'}\!\big(\kk[\link_\Delta(i)]\big)}_{\text{repeated in each }x_i\text{-degree}\,\geq1}
\otimes_\kk\, x_i\,\kk[x_i]\Big),
\]
where $S'=\kk[x_j:j\neq i]$ has maximal graded ideal $\mm'=(x_j:j\neq i)$, and
$\del_\Delta(i)=\Delta_{[n]\setminus\{i\}}$, $\link_\Delta(i)$ are the deletion and link of the vertex
$i$, regarded as complexes over $S'$. The factor $x_i\,\kk[x_i]=\bigoplus_{c\geq1}\kk x_i^c$ records the
$x_i$-grading of the second summand. 

Equivalently, in a fixed multidegree $a=(c;b)$ with $c=a_i\in\ZZ$ and $b\in\ZZ^{[n]\setminus\{i\}}$,
\[
\Hloc{q}{\pp_i}\!\big(\kk[\Delta]\big)_a\;\cong\;
\begin{cases}
\Hloc{q}{\mm'}\!\big(\kk[\del_\Delta(i)]\big)_b, & c=0,\\[2pt]
\Hloc{q}{\mm'}\!\big(\kk[\link_\Delta(i)]\big)_b, & c\geq1,\\[2pt]
0, & c\leq-1 .
\end{cases}
\]
\end{theorem}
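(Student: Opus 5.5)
The plan is to take cohomology of the chain-level splitting in Lemma~\ref{lem:chain-split} and then check that the resulting identification respects the module structure.

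\emph{Step 1: degreewise identification.} Since $\pp_i=\mathfrak a_T$ with $T=[n]\setminus\{i\}$, the \v{C}ech description of \S\ref{ssec:cech} gives
\[
\Hloc{q}{\pp_i}(\kk[\Delta])\cong H^q\bigl(\check C^\bullet_T(\kk[\Delta])\bigr).
\]
Cohomology commutes with direct sums of complexes, so the decomposition of Lemma~\ref{lem:chain-split} yields
\[
H^q\bigl(\check C^\bullet_T(\kk[\Delta])\bigr)=\bigoplus_{c}H^q\bigl(\check C^\bullet_T(\kk[\Delta])^{(c)}\bigr).
\]
The lemma's isomorphisms of complexes then give three cases:
\begin{itemize}[nosep]
\item for $c=0$, the summand is $H^q(\check C^\bullet_{\mm'}(\kk[\del_\Delta(i)]))$;
\item for $c\ge1$, it is $x_i^c\cdot H^q(\check C^\bullet_{\mm'}(\kk[\link_\Delta(i)]))$;
\item for $c\le-1$, it is $0$.
\end{itemize}
Applying the \v{C}ech computation over $S'$ with respect to $\mm'$ identifies the first two with $\Hloc{q}{\mm'}$ of the deletion and of the link. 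These isomorphisms are degree-preserving in the $b$-coordinates, and that gives the ``equivalently'' multidegree formula directly.

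\emph{Step 2: assembling the module statement.} Summing over $c\ge1$ rewrites $\bigoplus_{c\ge1}x_i^c\cdot H$ as $H\otimes_\kk x_i\kk[x_i]$, with the $\Zn$-grading matching because $\deg x_i^c=c\,\mathbf e_i$. For the $S'$-module structure, note that multiplication by $x_j$ ($j\ne i$) on the \v{C}ech complex fixes $a_i$. It therefore preserves each summand $\check C^{(c)}$. Under the bijections $x^a\mapsto x^b$ and $x^a\mapsto x_i^c x^b$ it corresponds to multiplication by $x_j$ on the $S'$-complexes. The reason is that in Lemma~\ref{lem:localization} the vanishing criterion is the same on both sides, and it is this same criterion that governs whether the product $x_jx^a$ is zero.

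So the lemma's isomorphisms are isomorphisms of complexes of $\Zn$-graded $S'$-modules, not merely of $\kk$-vector spaces. This compatibility is the main point needing care, since the lemma is stated only over $\kk$. I would verify it monomial by monomial: $x_j\cdot x^a=x^{a+\mathbf e_j}$, and $x^{a+\mathbf e_j}$ is nonzero in $\kk[\Delta]_{x_F}$ exactly when $\supp_+(b+\mathbf e_j)\cup F$, together with $\{i\}$ when $c\ge1$, is a face. For $c\ge1$ this is precisely the condition for $x^{b+\mathbf e_j}$ to be nonzero in $\kk[\link_\Delta(i)]_{x_F}$. For $c=0$, the condition that $\supp_+(b+\mathbf e_j)\cup F$ be a face is the condition for $x^{b+\mathbf e_j}$ to be nonzero in $\kk[\del_\Delta(i)]_{x_F}$.

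Cohomology is functorial for $S'$-linear chain maps, so the induced isomorphism on $H^q$ is $S'$-linear, and this completes the plan. I would not claim $x_i$-linearity, since the statement only asks for an isomorphism of $S'$-modules.
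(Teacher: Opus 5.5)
Your proposal is correct and follows essentially the same route as the paper: take cohomology of the chain-level splitting of Lemma~\ref{lem:chain-split}, identify the $c=0$, $c\ge1$, and $c\le-1$ pieces with $\Hloc{q}{\mm'}$ of the deletion, $\Hloc{q}{\mm'}$ of the link, and $0$ respectively, and then check that the bijections $x^a\mapsto x^b$ and $x^a\mapsto x_i^{\,c}x^b$ commute with multiplication by $x_j$ ($j\neq i$), so that the induced isomorphism is $S'$-linear. Your monomial-by-monomial check of this $S'$-linearity via Lemma~\ref{lem:localization} makes explicit a step that the paper asserts in a single line.
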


\begin{proof}
Local cohomology is the cohomology of the \v{C}ech complex, and cohomology commutes with the direct
sum decomposition of Lemma~\ref{lem:chain-split} (a finite direct sum in each multidegree). The
summand $\check C^\bullet_T(\kk[\Delta])^{(0)}$ (the multidegrees $a$ with $a_i=0$) equals
$\check C^\bullet_{\mm'}(\kk[\del_\Delta(i)])$, whose cohomology is
$\Hloc{q}{\mm'}(\kk[\del_\Delta(i)])$; for each $c\geq1$ the summand
$\check C^\bullet_T(\kk[\Delta])^{(c)}$ (the multidegrees with $a_i=c$) is the degree shift
$x_i^{\,c}\cdot\check C^\bullet_{\mm'}(\kk[\link_\Delta(i)])$, whose cohomology is
$\Hloc{q}{\mm'}(\kk[\link_\Delta(i)])$; the summands with $c\leq-1$ vanish. The chain-level
isomorphisms of Lemma~\ref{lem:chain-split} are $S'$-linear: multiplication by $x_j$ ($j\neq i$)
preserves the $x_i$-degree $c$ and commutes with the bijections $x^a\mapsto x^b$ ($c=0$) and
$x^a\mapsto x_i^{\,c}x^b$ ($c\ge1$), so the induced isomorphism on cohomology is one of $\Zn$-graded
$S'$-modules. Combining over all $c$ gives the stated isomorphism, with the link summand appearing
once for each $c\geq1$, i.e.\ tensored with $x_i\kk[x_i]$.
\end{proof}

\begin{example}\label{ex:two-summands}
Let $\Delta$ be the complex on $\{0,1,2,3\}$ with facets $\{0,1\}$,
$\{0,2\}$, and the isolated vertex $\{3\}$ (Figure~\ref{fig:two-summands}).
Take $i=0$; then
$\link_\Delta(0)=\{\varnothing,\{1\},\{2\}\}$ is two points,
regarded as a complex over $S'=\kk[x_1,x_2,x_3]$ with $3$ as a ghost
vertex, while
$\del_\Delta(0)=\{\varnothing,\{1\},\{2\},\{3\}\}$ is three points over
$S'$.

\begin{figure}[h]
\centering
\begin{tikzpicture}[scale=1.5,
    vtx/.style={circle,fill=black,inner sep=1.7pt},
    vlbl/.style={font=\small}]
\coordinate (v1) at (-1.3,0);
\coordinate (v0) at (0,0);
\coordinate (v2) at (1.3,0);
\coordinate (v3) at (2.7,0);
\draw[thick] (v1)--(v0)--(v2);
\foreach \p in {v0,v1,v2,v3} \node[vtx] at (\p) {};
\node[vlbl,above] at (v0) {$0$};
\node[vlbl,above] at (v1) {$1$};
\node[vlbl,above] at (v2) {$2$};
\node[vlbl,above] at (v3) {$3$};
\end{tikzpicture}
\caption{The complex $\Delta$ of Example~\ref{ex:two-summands}.}
\label{fig:two-summands}
\end{figure}

\emph{Local cohomology of $d$ isolated points.} For $\Gamma$ a set of $d$ isolated points,
$\kk[\Gamma]=\kk[x_1,\dots,x_d]/(x_jx_k:j<k)$ is one-dimensional Cohen--Macaulay, so only
$\Hloc{1}{\mm'}$ is nonzero, and Theorem~\ref{thm:hochster-classical} reads its graded pieces off the
links of faces: at $\sigma=\varnothing$ (link $=\Gamma$) it is $\dim_\kk\rH^{0}(\Gamma;\kk)=d-1$ in
multidegree $0$; at $\sigma=\{j\}$ (link $=\{\varnothing\}$) it is
$\dim_\kk\rH^{-1}(\{\varnothing\};\kk)=1$ in each multidegree $-m\mathbf e_j$ ($m\ge1$); all other pieces
vanish. Thus $\Hloc{1}{\mm'}(\kk[\del_\Delta(0)])$ has dimension $2$ at $(0,0,0)$ and $1$ at each
$-m\mathbf e_j$ ($j\in\{1,2,3\}$), while $\Hloc{1}{\mm'}(\kk[\link_\Delta(0)])$ has dimension $1$ at
$(0,0,0)$ and at each $-m\mathbf e_j$ ($j\in\{1,2\}$), and vanishes
in every multidegree with nonzero $x_3$-coordinate.

\emph{A representative \v{C}ech calculation.} The only faces of $\Delta$ inside $T=\{1,2,3\}$ are the
three vertices, so by Lemma~\ref{lem:localization} $\kk[\Delta]_{x_F}=0$ for $|F|\ge2$ and
\eqref{eq:cech} is the two-term complex
\[
0\to\kk[\Delta]\xrightarrow{\ d\ }\kk[\Delta]_{x_1}\oplus\kk[\Delta]_{x_2}\oplus\kk[\Delta]_{x_3}\to0,
\qquad d(u)=(u,u,u),
\]
in cohomological degrees $0,1$. The associated primes of $\kk[\Delta]$ are $(x_2,x_3)$, $(x_1,x_3)$, and $(x_0,x_1,x_2)$, none of which contains $\pp_0=(x_1,x_2,x_3)$, so $\Gamma_{\pp_0}(\kk[\Delta])=0$, $d$ is injective, and only $\Hloc{1}{\pp_0}(\kk[\Delta])=\operatorname{coker}d$ survives. By Lemma~\ref{lem:localization} the only
face containing $3$ is $\{3\}$, so $\kk[\Delta]_{x_3}$ is supported in the multidegrees $(0,0,0,a_3)$
alone---in particular at $a_0=0$---whereas $\kk[\Delta]_{x_1},\kk[\Delta]_{x_2}$ are supported at all
$a_0\ge0$. Hence in each $x_0$-degree $c\ge1$ the $x_3$-summand drops out and $\operatorname{coker}d$ is
computed from $\kk[\Delta]_{x_1}\oplus\kk[\Delta]_{x_2}$, i.e.\ from the two-point link, while in
$x_0$-degree $0$ all three summands contribute, i.e.\ the three-point deletion. Concretely, at
$\alpha=(0,0,0,0)$ one has $d(1)=(1,1,1)$ in $\kk^3$, so
$\Hloc{1}{\pp_0}(\kk[\Delta])_{(0,0,0,0)}\cong\kk^2$; at $\alpha=(c,0,0,0)$ with $c\ge1$ only two
summands appear and $d(x_0^c)=(x_0^c,x_0^c)$, so $\Hloc{1}{\pp_0}(\kk[\Delta])_{(c,0,0,0)}\cong\kk$.

These match the graded dimensions above: $\Hloc{1}{\pp_0}(\kk[\Delta])$ agrees with
$\Hloc{1}{\mm'}(\kk[\del_\Delta(0)])$ in $x_0$-degree $0$ and with
$\Hloc{1}{\mm'}(\kk[\link_\Delta(0)])$ in each $x_0$-degree $c\ge1$, which is the splitting of
Theorem~\ref{thm:structure},
\[
\Hloc{1}{\pp_0}(\kk[\Delta])\cong\Hloc{1}{\mm'}(\kk[\del_\Delta(0)])\oplus
\big(\Hloc{1}{\mm'}(\kk[\link_\Delta(0)])\otimes_\kk x_0\kk[x_0]\big).
\]
\end{example}

\begin{remark}[The two summands]\label{rem:reading}
The two summands carry complementary information. The deletion summand
$\Hloc{q}{\mm'}(\kk[\del_\Delta(i)])$ is independent of how $i$ attaches to the rest of $\Delta$; it is
the local cohomology of the complex obtained from $\Delta$ by deleting the vertex $i$. The link summand
$\Hloc{q}{\mm'}(\kk[\link_\Delta(i)])$ depends only on the faces $\sigma$ of $\Delta$ with
$\sigma\cup\{i\}\in\Delta$, that is, on the structure of $\Delta$ at the vertex $i$. Thus the $i$-local
content of $\Hloc{\bullet}{\pp_i}(\kk[\Delta])$ is carried by the link summand, in accordance with the
fact that $\pp_i$ is the defining ideal of the coordinate line through vertex $i$.
\end{remark}

\subsection{The combinatorial (Hochster) dictionary}\label{ssec:hochster}

Theorem~\ref{thm:structure} reduces everything to maximal-support local cohomology of the link and
deletion, for which Hochster's classical formula gives a complete combinatorial description. We state
it in the precise $\Zn$-graded, link-indexed form we need and include a short self-contained proof,
both because the exact form matters below and because the cochain-level isomorphism in the proof is
what makes the persistent version (Theorem~\ref{thm:persistent-hochster}) natural; we then compare
with the standard references in Remark~\ref{rem:hochster-source}.

\begin{theorem}[Hochster's local-cohomology formula, a special version of
{\cite[Theorem~2 and Lemma, p.~273]{Grabe}}.]
\label{thm:hochster-classical}
Let $\Gamma$ be a simplicial complex on the vertex set $[n]=\{1,\dots,n\}$, with Stanley--Reisner ring
$\kk[\Gamma]$ over $\kk[x_1,\dots,x_n]$ and maximal graded ideal $\mm$. Then the $\Zn$-graded Hilbert
series of local cohomology is
\[
\sum_{a\in\Zn}\dim_\kk \Hloc{q}{\mm}(\kk[\Gamma])_a\,\mathbf x^a
=\sum_{\sigma\in\Gamma}\dim_\kk\rH^{\,q-|\sigma|-1}\!\big(\link_\Gamma(\sigma);\kk\big)
\prod_{j\in\sigma}\frac{x_j^{-1}}{1-x_j^{-1}} .
\]
In particular, for a multidegree $a$ with $\supp_-(a)=\{j:a_j<0\}=\sigma$ a face of $\Gamma$ and
$\supp_+(a)=\varnothing$,
\[
\dim_\kk \Hloc{q}{\mm}(\kk[\Gamma])_a=\dim_\kk\rH^{\,q-|\sigma|-1}\!\big(\link_\Gamma(\sigma);\kk\big),
\]
and all graded pieces not of this form (i.e.\ with $\supp_+(a)\neq\varnothing$, or with
$\supp_-(a)\notin\Gamma$) vanish.
\end{theorem}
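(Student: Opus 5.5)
We prove the formula one multidegree at a time, using the $\Zn$-graded \v{C}ech complex $\check C^\bullet_{[n]}(\kk[\Gamma])$ of \eqref{eq:cech} together with the monomial description of Lemma~\ref{lem:localization}. Fix $a\in\Zn$ and put $\sigma=\supp_-(a)$, $P=\supp_+(a)$. By Lemma~\ref{lem:localization}, the summand $(\kk[\Gamma]_{x_F})_a$ equals $\kk$ when $F\supseteq\sigma$ and $P\cup F\in\Gamma$, and is $0$ otherwise. Hence the degree-$a$ strand of the \v{C}ech complex has a basis indexed by the faces $F$ with $\sigma\subseteq F$ and $F\cup P\in\Gamma$. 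Its differential sends $F$ to $F\cup\{j\}$ with sign $(-1)^{\nu_F(j)}$. If $\sigma\notin\Gamma$, or $\sigma\cup P\notin\Gamma$, there are no such $F$ and the strand is zero. This already gives part of the asserted vanishing.

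\textbf{Case $P=\varnothing$.} Write $F=\sigma\sqcup G$. The admissible $F$ then correspond exactly to the faces $G$ of $\link_\Gamma(\sigma)$, with $G=\varnothing$ allowed. The $\kk$-span of $\{e_F\}$ in cohomological degree $p=|\sigma|+|G|$ is therefore identified with the augmented simplicial cochains $\widetilde C^{\,p-|\sigma|-1}(\link_\Gamma(\sigma);\kk)$. One checks that the \v{C}ech differential becomes the simplicial coboundary. The sign $(-1)^{\nu_F(j)}$ differs from $(-1)^{\nu_G(j)}$ by $(-1)^{|\{u\in\sigma:u<j\}|}$. To absorb this discrepancy, rescale the basis by setting $e_F\mapsto \varepsilon(\sigma,G)\,e_G$, where $\varepsilon(\sigma,G)$ is the sign of the shuffle that sorts $\sigma\sqcup G$. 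This is the standard sign bookkeeping, and I expect it to be the only fiddly point; it does not affect dimensions in any case. Taking cohomology gives $\Hloc{q}{\mm}(\kk[\Gamma])_a\cong \rH^{\,q-|\sigma|-1}(\link_\Gamma(\sigma);\kk)$. This holds whenever $\sigma\in\Gamma$, regardless of the actual negative values of $a$ on $\sigma$.

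\textbf{Case $P\neq\varnothing$.} Here the strand is acyclic, and this is the main obstacle. The admissible $F$ are the faces $F\supseteq\sigma$ of $\st_\Gamma(P)$ with $F\cup P\in\Gamma$. Fix $j\in P$. Since $a_j>0$, we have $j\notin\sigma$, and $F\cup P\in\Gamma$ if and only if $(F\cup\{j\})\cup P\in\Gamma$. So $F\mapsto F\triangle\{j\}$ is an involution on the index set. Let $h$ send $e_F$ to $\pm e_{F\setminus\{j\}}$ when $j\in F$, and to $0$ otherwise. Then $h$ is a contracting homotopy, with $dh+hd=\pm\mathrm{id}$ on each basis element. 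Equivalently, the strand is the cochain complex of a cone, that is, the tensor product with the acyclic complex $\kk\to\kk$ in the $j$-coordinate. Hence all pieces with $\supp_+(a)\ne\varnothing$ vanish.

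\textbf{Hilbert series.} For a fixed face $\sigma$, the multidegrees $a$ with $\supp_+(a)=\varnothing$ and $\supp_-(a)=\sigma$ are exactly those with $a_j\le-1$ for $j\in\sigma$ and $a_j=0$ otherwise. Their generating function is $\prod_{j\in\sigma}x_j^{-1}/(1-x_j^{-1})$. Summing the dimension from the case $P=\varnothing$ over all faces $\sigma$ gives the displayed series. We noted above that the degree-$a$ strand, and hence the graded piece, vanishes whenever $\supp_-(a)\notin\Gamma$. Together with the case $P\neq\varnothing$, this gives all the asserted vanishing.
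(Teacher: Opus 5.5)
Your proposal is correct and follows essentially the same route as the paper. You take the multidegree-$a$ strand of $\check C^\bullet_{[n]}(\kk[\Gamma])$ from Lemma~\ref{lem:localization}, identify it for $\supp_+(a)=\varnothing$ with $\widetilde C^{\,\bullet-|\sigma|-1}(\link_\Gamma(\sigma);\kk)$ via a sign twist, and show it is acyclic (a cone) otherwise; the differences are cosmetic: you write out the contracting homotopy and the summation for the Hilbert series where the paper cites \cite[Lemma~5.3.5]{BrunsHerzog} and the standard references, and your shuffle sign is the paper's $\varepsilon_\sigma(G)=(-1)^{|\{(s,g)\in\sigma\times G:\,s<g\}|}$ if you concatenate $G$ before $\sigma$ (the other order only changes the differential by the harmless global sign $(-1)^{|\sigma|}$).
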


\begin{proof}
The Hilbert-series identity is Hochster's formula; for the $\Zn$-graded, link-indexed form used here see
\cite[Theorem~2 and Lemma, p.~273]{Grabe}, \cite[Theorem~II.4.1]{StanleyCCA}, \cite[Theorem~5.3.8]{BrunsHerzog}, and
\cite[Theorem~13.13]{MillerSturmfels}. We record only the cochain-level identification that the persistent
version below requires. Fix $a\in\Zn$ and set $\sigma_-=\supp_-(a)$, $\sigma_+=\supp_+(a)$. By
Lemma~\ref{lem:localization} the degree-$p$ term of the multidegree-$a$ component of
$\check C^\bullet_\mm(\kk[\Gamma])$ has $\kk$-basis $\{F:\sigma_-\subseteq F,\ \sigma_+\cup F\in\Gamma,\ |F|=p\}$.
If $\sigma_+\neq\varnothing$ or $\sigma_-\notin\Gamma$ this component is acyclic (a cone, resp.\ void)
\cite[Lemma~5.3.5]{BrunsHerzog}, which gives the vanishing. If $\sigma_+=\varnothing$ and $\sigma:=\sigma_-\in\Gamma$, the nonzero basis elements in cohomological degree $p$ are indexed by the faces $F\in\Gamma$ satisfying $\sigma\subseteq F$ and $|F|=p$. Write $G:=F\setminus\sigma\in\link_\Gamma(\sigma)$. For every face $G$ disjoint from $\sigma$, define
\(N_\sigma(G):=\bigl|\{(s,g)\in\sigma\times G:s<g\}\bigr|\) and \(\varepsilon_\sigma(G):=(-1)^{N_\sigma(G)}\).
Let $e_F$ denote the basis element $x^a$ in the \v{C}ech summand indexed by $F$, and let $G^\vee$ denote the simplicial cochain dual to the oriented face $G$, with orientation induced by the natural ordering of $[n]$. Define
\[
\Psi_a^p(e_F)
:=
\varepsilon_\sigma(F\setminus\sigma)\,(F\setminus\sigma)^\vee.
\]
Since $|F\setminus\sigma|=p-|\sigma|$, these maps define an isomorphism of graded $\kk$-vector spaces
\[
\Psi_a^\bullet\colon
\check C^\bullet_\mm(\kk[\Gamma])_a
\longrightarrow
\widetilde C^{\,\bullet-|\sigma|-1}
\bigl(\link_\Gamma(\sigma);\kk\bigr).
\]

We verify that $\Psi_a^\bullet$ respects the differentials. Let $j\notin F$ with $F\cup\{j\}\in\Gamma$, and put $G=F\setminus\sigma$. Using the notation
\(
\nu_A(j):=\bigl|\{u\in A:u<j\}\bigr|
\)
for any $A\subseteq[n]$, we have $\nu_F(j)=\nu_G(j)+\nu_\sigma(j)$. Moreover,
\[
\varepsilon_\sigma(G\cup\{j\})
=
\varepsilon_\sigma(G)(-1)^{\nu_\sigma(j)}.
\]
Consequently,
\[
(-1)^{\nu_F(j)}\varepsilon_\sigma(G\cup\{j\})
=
(-1)^{\nu_G(j)}\varepsilon_\sigma(G).
\]
The left-hand side is the coefficient obtained by first applying the \v{C}ech differential and then $\Psi_a^{p+1}$, whereas the right-hand side is the coefficient obtained by first applying $\Psi_a^p$ and then the simplicial coboundary. Hence
\[
\Psi_a^{p+1}\circ d_{\check C}
=
\delta\circ\Psi_a^p.
\]
Thus $\Psi_a^\bullet$ is an isomorphism of cochain complexes, and therefore
\[
\Hloc{q}{\mm}(\kk[\Gamma])_a
\cong
\rH^{\,q-|\sigma|-1}
\bigl(\link_\Gamma(\sigma);\kk\bigr).
\]
This isomorphism is defined from the faces of $\Gamma$ alone; in particular it is natural for inclusions of
complexes on $[n]$, a fact used in the proof of Theorem~\ref{thm:persistent-hochster}.
\end{proof}

\begin{remark}[Relation to the standard references and applicability]\label{rem:hochster-source}
The formula in Theorem~\ref{thm:hochster-classical} is due to Hochster; the multigraded
identification of the graded pieces with reduced cohomology of links, with the explicit cochain
isomorphism used in the proof, is Gr\"abe's \cite[Theorem~2 and Lemma, p.~273]{Grabe}. It appears as
\cite[Theorem~II.4.1]{StanleyCCA} and \cite[Theorem~5.3.8]{BrunsHerzog}, and as
\cite[Theorem~13.13]{MillerSturmfels}. We have given the short proof in full because the precise
$\Zn$-graded, link-indexed form---rather than the coarsely graded statement---is what the persistent
theory below requires, and the same cochain-level isomorphism (now made natural in $\Gamma$) drives
the persistent formula of Theorem~\ref{thm:persistent-hochster}. Two points of comparison with the
textbook statements are worth making explicit, since the notation differs.

\emph{(a) Non-vertex variables.} We apply Theorem~\ref{thm:hochster-classical} to
$\Gamma=\link_\Delta(i)$ or $\del_\Delta(i)$ over $S'$, which need not use every variable as a vertex.
This is harmless: if $\{j\}\notin\Gamma$ then $x_j\in I_\Gamma$, so every \v{C}ech summand in a
multidegree with $j\in F$ vanishes, and the formula over $S'$ agrees with the one over the subring on
the vertices of $\Gamma$. The proof of Theorem~\ref{thm:hochster-classical} nowhere assumes $\Gamma$
uses every variable.

\emph{(b) Empty face and the degree-$(0,\dots,0)$ piece.} Since $\link_\Gamma(\varnothing)=\Gamma$, the
$\sigma=\varnothing$ term gives $\Hloc{q}{\mm}(\kk[\Gamma])_{(0,\dots,0)}\cong\widetilde
H^{q-1}(\Gamma;\kk)$. The degree convention is the standard reduced normalization, and two boundary
cases must be kept distinct. For the complex $\{\varnothing\}$ whose only face is the empty face,
$\widetilde H^{-1}(\{\varnothing\};\kk)=\kk$; for the void complex $\varnothing_{\mathrm{void}}$ with no
faces at all, $\widetilde H^{\bullet}(\varnothing_{\mathrm{void}};\kk)=0$. These occur for different
$\sigma$: when $\sigma$ is a \emph{facet} of $\Gamma$, the only $\tau$ with $\tau\cap\sigma=\varnothing$
and $\tau\cup\sigma\in\Gamma$ is $\tau=\varnothing$ (a nonempty such $\tau$ would give a face
$\tau\cup\sigma\supsetneq\sigma$, contradicting maximality), so $\link_\Gamma(\sigma)=\{\varnothing\}$
and $\widetilde H^{\,-1}(\link_\Gamma(\sigma);\kk)=\kk$; the void complex arises only when
$\sigma\notin\Gamma$, contributing $0$. Authors who phrase Hochster's formula with homology $\widetilde
H_{q-|\sigma|-1}$ rather than cohomology obtain the same dimensions, since over $\kk$ reduced homology
and cohomology of a finite complex are dual and equidimensional.
\end{remark}

Combining Theorems~\ref{thm:structure} and~\ref{thm:hochster-classical} yields the static
combinatorial description that we will persist. We state the piece that the filtration will use.

\begin{corollary}[Combinatorial description of vertex-prime local cohomology]\label{cor:static-comb}
Fix $i\in[n]$ and a multidegree $a=(c;b)$ with $b\in\ZZ^{[n]\setminus\{i\}}$. Let
$\sigma=\supp_-(b)\subseteq[n]\setminus\{i\}$ and assume $\supp_+(b)=\varnothing$ (all other
multidegrees give $0$). Then
\[
\dim_\kk\Hloc{q}{\pp_i}\!\big(\kk[\Delta]\big)_{(c;b)}=
\begin{cases}
\dim_\kk\rH^{\,q-|\sigma|-1}\!\big(\link_{\del_\Delta(i)}(\sigma);\kk\big), & c=0,\ \sigma\in\del_\Delta(i),\\[4pt]
\dim_\kk\rH^{\,q-|\sigma|-1}\!\big(\link_{\link_\Delta(i)}(\sigma);\kk\big), & c\geq1,\ \sigma\in\link_\Delta(i),\\[4pt]
0, & \text{otherwise.}
\end{cases}
\]
Moreover $\link_{\del_\Delta(i)}(\sigma)=\link_\Delta(\sigma)\cap\del_\Delta(i)$ and
$\link_{\link_\Delta(i)}(\sigma)=\link_\Delta(\sigma\cup\{i\})$.
\end{corollary}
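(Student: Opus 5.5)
The plan is to compose Theorem~\ref{thm:structure} with Theorem~\ref{thm:hochster-classical}, one multidegree at a time, and then check the two link identities directly from the definitions.

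\emph{Reduction to the link and deletion.} Fix $a=(c;b)$. By the graded form of Theorem~\ref{thm:structure}, $\Hloc{q}{\pp_i}(\kk[\Delta])_a$ vanishes for $c\le-1$. It equals $\Hloc{q}{\mm'}(\kk[\del_\Delta(i)])_b$ for $c=0$, and $\Hloc{q}{\mm'}(\kk[\link_\Delta(i)])_b$ for $c\ge1$.

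\emph{Applying Hochster's formula over $S'$.} Apply Theorem~\ref{thm:hochster-classical} with $\Gamma=\del_\Delta(i)$, respectively $\Gamma=\link_\Delta(i)$, viewed as complexes on the vertex set $[n]\setminus\{i\}$ over $S'$. Remark~\ref{rem:hochster-source}(a) justifies this even though some variables of $S'$ may not be vertices of $\Gamma$.
\begin{itemize}[nosep]
\item If $\supp_+(b)\neq\varnothing$, or if $\sigma=\supp_-(b)\notin\Gamma$, the piece vanishes. This gives the ``otherwise'' case.
\item If $\supp_+(b)=\varnothing$ and $\sigma\in\Gamma$, the dimension equals $\dim_\kk\rH^{q-|\sigma|-1}(\link_\Gamma(\sigma);\kk)$. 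Substituting the two choices of $\Gamma$ yields the first two lines of the display.
\end{itemize}

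\emph{The two link identities.} Both are elementwise set computations for $\sigma\subseteq[n]\setminus\{i\}$.
\begin{itemize}[nosep]
\item A set $\tau$ lies in $\link_{\del_\Delta(i)}(\sigma)$ iff $\tau\in\del_\Delta(i)$, $\tau\cap\sigma=\varnothing$, and $\tau\cup\sigma\in\del_\Delta(i)$. Since $i\notin\sigma$, the condition $\tau\cup\sigma\in\del_\Delta(i)$ is equivalent to $\tau\cup\sigma\in\Delta$ together with $i\notin\tau$. So the conditions say exactly $\tau\in\link_\Delta(\sigma)$ and $i\notin\tau$, i.e.\ $\tau\in\link_\Delta(\sigma)\cap\del_\Delta(i)$.
\item A set $\tau$ lies in $\link_{\link_\Delta(i)}(\sigma)$ iff $\tau\cap\sigma=\varnothing$, $i\notin\tau$, and $\tau\cup\sigma\cup\{i\}\in\Delta$. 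Note that $\tau\in\link_\Delta(i)$ is implied by downward closure. These conditions are exactly the definition of $\tau\in\link_\Delta(\sigma\cup\{i\})$, because $\tau\cap(\sigma\cup\{i\})=\varnothing$ encodes both disjointness requirements.
\end{itemize}

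The only point needing care is the bookkeeping of edge cases, namely that ghost vertices of $S'$ and the empty versus void complex are handled consistently. This is settled by Remark~\ref{rem:hochster-source}(a),(b). For example, $\sigma\in\link_\Delta(i)$ exactly when $\sigma\cup\{i\}\in\Delta$, so the stated case conditions match precisely where Hochster's formula gives nonzero contributions. I do not expect a genuine obstacle.
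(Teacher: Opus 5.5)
Your proposal is correct and follows the paper's proof essentially verbatim. Like the paper, you substitute Theorem~\ref{thm:hochster-classical} (with $\Gamma=\del_\Delta(i)$ for $c=0$ and $\Gamma=\link_\Delta(i)$ for $c\ge1$) into the multidegree form of Theorem~\ref{thm:structure}, and you verify the two link identities by the same elementwise membership check; your appeal to Remark~\ref{rem:hochster-source} for ghost vertices and void links is sound bookkeeping that the paper leaves implicit.
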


\begin{proof}
Substitute Theorem~\ref{thm:hochster-classical} (for $\Gamma=\del_\Delta(i)$ when $c=0$, and
$\Gamma=\link_\Delta(i)$ when $c\geq1$) into Theorem~\ref{thm:structure}; the vanishing statements
match. For the last sentence: a face $\tau$ of $\del_\Delta(i)$ lies in $\link_{\del_\Delta(i)}(\sigma)$
iff $\tau\cap\sigma=\varnothing$, $i\notin\tau$, and $\tau\cup\sigma\in\Delta$, i.e.\
$\tau\in\link_\Delta(\sigma)$ and $i\notin\tau$, which is $\link_\Delta(\sigma)\cap\del_\Delta(i)$.
Similarly $\tau\in\link_{\link_\Delta(i)}(\sigma)$ iff $\tau\cap\sigma=\varnothing$ and
$\tau\cup\sigma\in\link_\Delta(i)$, i.e.\ $(\tau\cup\sigma)\cup\{i\}\in\Delta$, which (as
$i\notin\tau\cup\sigma$) means $\tau\in\link_\Delta(\sigma\cup\{i\})$.
\end{proof}

\section{The persistent theory}\label{sec:persistent}

\subsection{Filtrations and the reversed structure maps}\label{ssec:filtration}

Following \cite{SuwayyidWeiPSRT}, a monotonic function $f:\Delta\to\RR$ ($f(\tau)\le f(\sigma)$ whenever
$\tau\subseteq\sigma$) induces a filtration
\[
\Delta^t_f=\{\sigma\in\Delta:f(\sigma)\le t\},\qquad \Delta^s_f\subseteq\Delta^t_f\ (s\le t),
\]
a nested family of subcomplexes of $\Delta$ on the fixed vertex set $[n]$. We write $\Delta^t$ when
$f$ is understood. For $s\le t$ the inclusion $\Delta^s\subseteq\Delta^t$ gives the reverse inclusion
of Stanley--Reisner ideals $I_{\Delta^t}\subseteq I_{\Delta^s}$, hence a surjection of rings
\begin{equation}\label{eq:ring-surj}
\kk[\Delta^t]\twoheadrightarrow\kk[\Delta^s],\qquad s\le t .
\end{equation}

The functor $\Hloc{q}{\pp_i}=R^q\Gamma_{\pp_i}$ is covariant in the module argument. Applying it to
\eqref{eq:ring-surj} gives, for each $q$ and each $s\le t$, a $\Zn$-graded $\kk$-linear map
\begin{equation}\label{eq:struct-map}
\theta^{\,t\to s}_{i,q}:\ \Hloc{q}{\pp_i}\!\big(\kk[\Delta^t]\big)\longrightarrow
\Hloc{q}{\pp_i}\!\big(\kk[\Delta^s]\big),\qquad s\le t,
\end{equation}
running \emph{opposite} to the filtration direction. These maps are functorial:
$\theta^{\,s\to s}_{i,q}=\mathrm{id}$ and $\theta^{\,t\to s}_{i,q}\circ\theta^{\,u\to t}_{i,q}
=\theta^{\,u\to s}_{i,q}$ for $s\le t\le u$, because $\Gamma_{\pp_i}$ is a functor and
\eqref{eq:ring-surj} is functorial in the filtration. This is the local-cohomology counterpart of
the reversed Tor-side maps of \cite{SuwayyidWeiGraphs}; the persistence module it defines is an $\RR^{\mathrm{op}}$-indexed
(equivalently, a ``reversed-arrow'') persistence module.

\begin{remark}[Compatibility with the structure theorem]\label{rem:maps-split}
The surjection \eqref{eq:ring-surj} preserves the $x_i$-degree (it is a map of $\Zn$-graded rings).
Hence under Theorem~\ref{thm:structure} the map $\theta^{\,t\to s}_{i,q}$ respects the
deletion/link splitting: in $x_i$-degree $0$ it is the map
$\Hloc{q}{\mm'}(\kk[\del_{\Delta^t}(i)])\to\Hloc{q}{\mm'}(\kk[\del_{\Delta^s}(i)])$ induced by the
surjection $\kk[\del_{\Delta^t}(i)]\twoheadrightarrow\kk[\del_{\Delta^s}(i)]$ (which exists because
$\del_{\Delta^s}(i)\subseteq\del_{\Delta^t}(i)$), and in each $x_i$-degree $c\geq1$ it is the
analogous map for the link, induced by $\link_{\Delta^s}(i)\subseteq\link_{\Delta^t}(i)$. Both
inclusions of complexes hold because passing to deletion and link commutes with the inclusion of
subcomplexes on a fixed vertex set: $\sigma\in\Delta^s$ and ($\sigma\cup\{i\}\in\Delta^s$) imply the
same for $\Delta^t\supseteq\Delta^s$.
\end{remark}

We now extend the levelwise splitting of Remark~\ref{rem:maps-split} to an isomorphism of
persistence modules and record the remaining $x_i$-action. Recall that the variable $x_i$ is not
inverted in the \v{C}ech complex computing $\Hloc{\bullet}{\pp_i}$, so each
$\Hloc{q}{\pp_i}(\kk[\Delta^t])$ is a $\Zn$-graded module over $\kk[x_i]$, with $x_i$ raising the
$x_i$-degree. The maps $\theta^{t\to s}_{i,q}$ are $\kk[x_i]$-linear because they are induced by the
$\Zn$-graded ring surjections \eqref{eq:ring-surj}.

We first recall the persistence-module terminology that will be used below.
\begin{definition}[Persistence module, {\cite{ChazalDeSilvaGlisseOudot}}; functorial formulation
{\cite{BubenikScott}}]\label{def:pers-module}
Let $(P,\le)$ be a poset, regarded as a category with a unique morphism $s\to t$ whenever $s\le t$. A
\emph{$P$-indexed persistence module} of $\kk$-vector spaces is a functor $\mathbb U\colon
P\to\mathrm{Vec}_\kk$; concretely, a family $(U_t)_{t\in P}$ of $\kk$-vector spaces with transition
maps $u^s_t\colon U_s\to U_t$ for $s\le t$ satisfying $u^t_t=\mathrm{id}_{U_t}$ and
$u^t_r\circ u^s_t=u^s_r$ for $s\le t\le r$. A morphism of persistence modules is a natural
transformation of functors. We take $P=\RR$ with its usual order or its opposite $\RR^{\mathrm{op}}$;
the modules arising from \eqref{eq:struct-map}, whose maps run \emph{against} the filtration, are
$\RR^{\mathrm{op}}$-indexed. Such an $\mathbb U$ is \emph{pointwise finite-dimensional} \textup{(}pfd\textup{)}
if every $U_t$ is finite-dimensional.
\end{definition}

For $\alpha\in\Zn$, define the $\RR^{\mathrm{op}}$-indexed persistence module
\[
\mathbb V^q_{i,\alpha}(f)
:=\Big(\Hloc{q}{\pp_i}(\kk[\Delta^t_f])_\alpha,\ v^{t\to s}_\alpha\Big)_{t\in\RR},
\]
where $v^{t\to s}_\alpha$ is the restriction of $\theta^{t\to s}_{i,q}$ to the
multidegree-$\alpha$ component.

\begin{definition}[Deletion and link persistence modules]\label{def:DL-modules}
Fix a vertex $i$, a degree $q$, and a filtration $(\Delta^t_f)_{t\in\RR}$. For $b\in\ZZ^{[n]\setminus\{i\}}$
let $\mathbb D^q_{i,b}(f)$ and $\mathbb L^q_{i,b}(f)$ be the $\RR^{\mathrm{op}}$-indexed persistence
modules
\[
\mathbb D^q_{i,b}(f)\ :=\ \Big(\Hloc{q}{\mm'}(\kk[\del_{\Delta^t}(i)])_b,\ \eta^{t\to s}\Big),
\qquad
\mathbb L^q_{i,b}(f)\ :=\ \Big(\Hloc{q}{\mm'}(\kk[\link_{\Delta^t}(i)])_b,\ \zeta^{t\to s}\Big),
\]
where $\eta^{t\to s}$ and $\zeta^{t\to s}$ are the maps induced on $\Hloc{q}{\mm'}(-)_b$ by the ring
surjections $\kk[\del_{\Delta^t}(i)]\twoheadrightarrow\kk[\del_{\Delta^s}(i)]$ and
$\kk[\link_{\Delta^t}(i)]\twoheadrightarrow\kk[\link_{\Delta^s}(i)]$ of Remark~\ref{rem:maps-split}.
These satisfy the axioms of Definition~\ref{def:pers-module} by the functoriality already
established for $\theta^{t\to s}_{i,q}$ in \S\ref{ssec:filtration}, applied now to the nested
deletions $\del_{\Delta^s}(i)\subseteq\del_{\Delta^t}(i)$ and links
$\link_{\Delta^s}(i)\subseteq\link_{\Delta^t}(i)$ of Remark~\ref{rem:maps-split}.

\end{definition}

Proposition~\ref{prop:reduction} below shows that each of
$\mathbb D^q_{i,b}(f)$ and $\mathbb L^q_{i,b}(f)$ depends on $b$ only through
$\sigma=\supp_-(b)$, and that only finitely many squarefree representatives
$b=-\mathbf 1_\sigma$ are needed. We therefore write
$\mathbb D^q_{i,\sigma}(f)$ and $\mathbb L^q_{i,\sigma}(f)$ for the resulting
face-indexed modules.

\begin{theorem}[Summand decomposition of the persistence module]\label{thm:summand-iso}
Fix a vertex $i$ and a degree $q$, and let $(\Delta^t_f)_{t\in\RR}$ be the filtration induced by a
monotonic function $f\colon\Delta\to\RR$.
\begin{enumerate}[label=\textup{(\alph*)},nosep,leftmargin=2.2em]
\item \textup{(Multidegreewise.)} For every $c\in\ZZ$ and
$b\in\ZZ^{[n]\setminus\{i\}}$, there is an isomorphism of
$\RR^{\mathrm{op}}$-indexed persistence modules
\[
\mathbb V^q_{i,(c;b)}(f)\ \cong\
\begin{cases}
\mathbb D^q_{i,b}(f), & c=0,\\[2pt]
\mathbb L^q_{i,b}(f), & c\ge 1,\\[2pt]
0, & c\le -1 .
\end{cases}
\]
In particular, for fixed $b$, all modules $\mathbb V^q_{i,(c;b)}(f)$ with $c\ge1$ are canonically
isomorphic to $\mathbb L^q_{i,b}(f)$.

\item \textup{(Graded module-valued naturality.)} The levelwise isomorphisms of
Theorem~\ref{thm:structure} are natural in $t$. More precisely, let
\[
\Phi_t\colon \Hloc{q}{\pp_i}(\kk[\Delta^t_f])\xrightarrow{\ \cong\ }
\Hloc{q}{\mm'}(\kk[\del_{\Delta^t_f}(i)])\oplus
\Big(\Hloc{q}{\mm'}(\kk[\link_{\Delta^t_f}(i)])\otimes_\kk x_i\kk[x_i]\Big)
\]
be the isomorphism of $\Zn$-graded $S'$-modules from
Theorem~\ref{thm:structure}. Let $\eta^{t\to s}$ and $\zeta^{t\to s}$ denote the full
$\Zn$-graded $S'$-linear maps induced by the corresponding deletion and link surjections. Then,
for every $s\le t$,
\[
\Phi_s\circ\theta^{t\to s}_{i,q}
=
\bigl(\eta^{t\to s}\oplus
(\zeta^{t\to s}\otimes\mathrm{id}_{x_i\kk[x_i]})\bigr)\circ\Phi_t.
\]
Consequently, $\{\Phi_t\}_{t\in\RR}$ is a natural isomorphism of functors
\[
\RR^{\mathrm{op}}\longrightarrow \operatorname{GrMod}_{\Zn}(S').
\]
The displayed target is here regarded with its evident $S'$-module structure. Its
noncomponentwise $x_i$-action is described in Proposition~\ref{prop:xi-action}.
\end{enumerate}
\end{theorem}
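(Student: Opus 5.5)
The plan is to prove naturality at the chain level and then pass to cohomology. Part~(a) will follow by restricting part~(b) to a single multidegree.

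For $s\le t$, the surjection $\pi\colon\kk[\Delta^t]\twoheadrightarrow\kk[\Delta^s]$ of \eqref{eq:ring-surj} induces a map of \v{C}ech complexes $\check C^\bullet_T(\pi)\colon\check C^\bullet_T(\kk[\Delta^t])\to\check C^\bullet_T(\kk[\Delta^s])$, with $T=[n]\setminus\{i\}$. This map acts termwise on each localization $\kk[\Delta^t]_{x_F}\to\kk[\Delta^s]_{x_F}$, and on cohomology it induces $\theta^{t\to s}_{i,q}$. This uses the classical naturality of the isomorphism $\Hloc{q}{\mathfrak a_T}(M)\cong H^q(\check C^\bullet_T(M))$ in $M$.

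By Lemma~\ref{lem:localization}, each graded piece $(\kk[\Delta]_{x_F})_a$ is either $\kk\cdot x^a$ or $0$. Hence the map sends the basis monomial $x^a$ to $x^a$ when the class is nonzero in $\kk[\Delta^s]_{x_F}$, and to $0$ otherwise. In particular it preserves the $x_i$-exponent $c$, so it respects the splitting of Lemma~\ref{lem:chain-split}.

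The key step is to check that, under the monomial bijections of Lemma~\ref{lem:chain-split}, this componentwise map coincides with the \v{C}ech map induced by the deletion surjection (for $c=0$) or the link surjection (for $c\ge1$). Fix $a=(c;b)$. Both maps send $x^b$ to $x^b$, or $x_i^c x^b$ to $x_i^cx^b$, exactly when the relevant face condition holds at the smaller complex; otherwise they send it to $0$. The two face conditions agree by the same equivalences used in the proof of Lemma~\ref{lem:chain-split}:
\[
\supp_+(a)\cup F\in\Delta^s \iff \supp_+(b)\cup F\in\del_{\Delta^s}(i) \quad (c=0),
\]
\[
\supp_+(a)\cup F\in\Delta^s \iff \supp_+(b)\cup F\in\link_{\Delta^s}(i) \quad (c\ge1).
\]
So the squares of chain complexes commute on the nose, degree by degree. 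Taking cohomology, and using that cohomology commutes with the direct sum over $c$, gives
\[
\Phi_s\circ\theta^{t\to s}_{i,q}=\bigl(\eta^{t\to s}\oplus(\zeta^{t\to s}\otimes\mathrm{id})\bigr)\circ\Phi_t.
\]
The maps involved are $S'$-linear because multiplication by $x_j$ ($j\ne i$) commutes with all the monomial maps. Together with the levelwise isomorphisms of Theorem~\ref{thm:structure} and the functoriality recorded in \S\ref{ssec:filtration} and Definition~\ref{def:DL-modules}, this gives the natural isomorphism asserted in~(b).

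For~(a), restrict the commuting square to the multidegree $(c;b)$. For $c=0$ the restriction is the square relating $v^{t\to s}_{(0;b)}$ to $\eta^{t\to s}_b$. For $c\ge1$ it is the square relating $v^{t\to s}_{(c;b)}$ to $\zeta^{t\to s}_b$, via the shift by $x_i^c$. For $c\le-1$ both sides vanish by Theorem~\ref{thm:structure}. The isomorphisms for different $c\ge1$ all factor through the same identification with $\mathbb L^q_{i,b}(f)$, which gives the stated canonicity.

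I expect the main obstacle to be bookkeeping rather than substance. One must make sure that $\Phi_t$ is literally the isomorphism induced on cohomology by the chain-level bijection of Lemma~\ref{lem:chain-split}, so that no hidden choices enter. One must also confirm that the \v{C}ech signs $(-1)^{\nu_F(j)}$ are the same on both sides. They are, since $F\subseteq T$, and both complexes use the same ordering of $T$ and the same sign convention; this means the chain map commutes with the differentials on both sides.
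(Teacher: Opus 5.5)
Your proposal is correct and follows essentially the same route as the paper. Both arguments rest on one fact: the \v{C}ech map induced by $\kk[\Delta^t]\twoheadrightarrow\kk[\Delta^s]$ preserves the $x_i$-degree splitting of Lemma~\ref{lem:chain-split} and agrees with the deletion/link maps under its monomial bijections. The paper takes this from Remark~\ref{rem:maps-split}, whereas you verify it explicitly via the face-condition equivalences, and the remaining difference in order (the paper proves the multidegreewise squares of (a) and assembles them into (b), while you prove (b) at chain level and restrict to get (a)) is immaterial.
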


\begin{proof}
\emph{(a)} Fix $c\in\ZZ$ and $b\in\ZZ^{[n]\setminus\{i\}}$. At each level $t$, the multidegree
form of Theorem~\ref{thm:structure} gives an isomorphism
\[
\phi_t^{c,b}\colon
\Hloc{q}{\pp_i}(\kk[\Delta^t_f])_{(c;b)}\xrightarrow{\ \cong\ }
\begin{cases}
\Hloc{q}{\mm'}(\kk[\del_{\Delta^t_f}(i)])_b, & c=0,\\[2pt]
\Hloc{q}{\mm'}(\kk[\link_{\Delta^t_f}(i)])_b, & c\ge1,\\[2pt]
0, & c\le-1 .
\end{cases}
\]
For $s\le t$, Remark~\ref{rem:maps-split} shows that the square formed by
$\phi_t^{c,b}$, $\phi_s^{c,b}$, and the corresponding transition maps commutes. In the case
$c=0$, the target transition map is $\eta^{t\to s}$; in the case $c\ge1$, it is
$\zeta^{t\to s}$; and in the case $c\le-1$, both sides are zero. Thus
$\{\phi_t^{c,b}\}_{t\in\RR}$ is a natural isomorphism of
$\RR^{\mathrm{op}}$-indexed persistence modules. For $c\ge1$, neither the levelwise
identification nor the transition maps depend on $c$, which gives the final assertion.

\emph{(b)} The map $\Phi_t$ is obtained by assembling the maps $\phi_t^{c,b}$ over all
multidegrees $(c;b)$. All maps in the asserted identity preserve the $\Zn$-grading. Therefore, the
identity may be checked in each multidegree separately, where it is exactly the commutative square
proved in part~(a). Each $\Phi_t$, $\eta^{t\to s}$, and $\zeta^{t\to s}$ is $S'$-linear, so the
result is a natural isomorphism with values in $\operatorname{GrMod}_{\Zn}(S')$.
\end{proof}

\begin{proposition}[The $S$-module structure of the decomposition]
\label{prop:xi-action}
For each $t$, let
\[
D_t^q
:=
\Hloc{q}{\mm'}\bigl(\kk[\del_{\Delta_f^t}(i)]\bigr),
\qquad
L_t^q
:=
\Hloc{q}{\mm'}\bigl(\kk[\link_{\Delta_f^t}(i)]\bigr).
\]
The inclusion
\[
\link_{\Delta_f^t}(i)\subseteq \del_{\Delta_f^t}(i)
\]
induces a quotient of Stanley--Reisner rings
\[
\kk[\del_{\Delta_f^t}(i)]
\twoheadrightarrow
\kk[\link_{\Delta_f^t}(i)]
\]
and hence a $\Zn$-graded $S'$-linear map
\[
\rho_t^q\colon D_t^q\longrightarrow L_t^q.
\]

Equip
\[
D_t^q\oplus\bigl(L_t^q\otimes_\kk x_i\kk[x_i]\bigr)
\]
with an $S=S'[x_i]$-module structure by retaining its natural
$S'$-module structure and defining multiplication by $x_i$ by
\[
x_i\cdot(u,0)
=
\bigl(0,\rho_t^q(u)\otimes x_i\bigr),
\qquad u\in D_t^q,
\]
and
\[
x_i\cdot\bigl(0,v\otimes x_i^c\bigr)
=
\bigl(0,v\otimes x_i^{c+1}\bigr),
\qquad
v\in L_t^q,\quad c\geq1.
\]
With this action, the isomorphism
\[
\Phi_t\colon
\Hloc{q}{\pp_i}\bigl(\kk[\Delta_f^t]\bigr)
\xrightarrow{\ \cong\ }
D_t^q\oplus\bigl(L_t^q\otimes_\kk x_i\kk[x_i]\bigr)
\]
of Theorem~\ref{thm:summand-iso} is an isomorphism of
$\Zn$-graded $S$-modules.

Moreover, for $s\leq t$, the maps $\rho_t^q$ are compatible with the
persistence structure:
\[
\rho_s^q\circ\eta^{t\to s}
=
\zeta^{t\to s}\circ\rho_t^q.
\]
Consequently, the family $\{\Phi_t\}_{t\in\RR}$ is a natural
isomorphism of $\Zn$-graded $S$-module-valued persistence modules.
\end{proposition}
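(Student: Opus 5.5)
The plan is to work at the chain level, where $\Phi_t$ is defined by the explicit basis bijections of Lemma~\ref{lem:chain-split}, and to check how multiplication by $x_i$ moves basis monomials between the pieces $\check C^\bullet_T(\kk[\Delta^t])^{(c)}$. Multiplication by $x_i$ on $\check C^\bullet_T(\kk[\Delta^t])$ is a chain map, since $x_i$ is not inverted and the \v{C}ech differentials are built from canonical localization maps. It sends the $x_i$-degree-$c$ summand to the $x_i$-degree-$(c+1)$ summand, and it induces the $x_i$-action on $\Hloc{q}{\pp_i}(\kk[\Delta^t])$. It therefore suffices to identify the chain maps $x_i\colon(c)\to(c+1)$ under the isomorphisms of Lemma~\ref{lem:chain-split}, and then pass to cohomology.

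\emph{Case $c\ge1$.} A basis monomial $x^a=x_i^c x^b$ in the summand indexed by $F\subseteq T$ is sent to $x_i^{c+1}x^b$. Its support condition is $\supp_+(b)\cup F\cup\{i\}\in\Delta^t$, which does not depend on $c\ge1$. So under $x^a\mapsto x_i^c\cdot x^b$, multiplication by $x_i$ is the identity of $\check C^\bullet_{\mm'}(\kk[\link_{\Delta^t}(i)])$ followed by a shift from the $x_i^c$ copy to the $x_i^{c+1}$ copy. On cohomology this gives $v\otimes x_i^c\mapsto v\otimes x_i^{c+1}$.

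\emph{Case $c=0$.} Now $x^b$ is sent to $x_i x^b$. This is nonzero exactly when $\supp_+(b)\cup F\cup\{i\}\in\Delta^t$, which is exactly when $x^b$ survives in $\kk[\link_{\Delta^t}(i)]_{x_F}$. On the other hand, the quotient map $\kk[\del_{\Delta^t}(i)]\to\kk[\link_{\Delta^t}(i)]$ localizes to $x^b\mapsto x^b$ or $0$ by the same criterion (Lemma~\ref{lem:localization}). So at chain level the map $x_i\colon(0)\to(1)$ is identified with $\check C^\bullet_{\mm'}$ applied to this quotient, composed with the shift into the $x_i^1$ copy. Taking cohomology gives $(u,0)\mapsto(0,\rho^q_t(u)\otimes x_i)$.

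The one point needing care is that the \v{C}ech sign conventions agree on both sides. They do, because the complexes for $\Delta^t$, its deletion and its link all use the same index sets $F\subseteq T$ with the same signs $(-1)^{\nu_F(j)}$, and both identifications send basis monomials to basis monomials with coefficient $+1$. With this checked, $\Phi_t$ intertwines the $x_i$-action with the stated action. Since $\Phi_t$ is already $S'$-linear by Theorem~\ref{thm:structure}, it is an isomorphism of $\Zn$-graded $S$-modules. This chain-level check is the step I expect to be the main obstacle.

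For the compatibility $\rho^q_s\circ\eta^{t\to s}=\zeta^{t\to s}\circ\rho^q_t$, consider the square of ring surjections from $\kk[\del_{\Delta^t}(i)]$ to $\kk[\link_{\Delta^s}(i)]$. Both composites are the canonical quotient $S'/I_{\del_{\Delta^t}(i)}\to S'/I_{\link_{\Delta^s}(i)}$, so the square commutes. Functoriality of $\Hloc{q}{\mm'}$ then gives the identity.

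Finally, Theorem~\ref{thm:summand-iso}(b) gives naturality of $\Phi_t$ as $S'$-linear maps. The transition maps on the target, $\eta\oplus(\zeta\otimes\mathrm{id})$, commute with the $x_i$-action defined above: on the $x_i^{c}$, $c\ge1$, copies this is trivial, and on the $D$ summand it is exactly the displayed compatibility. The maps $\theta^{t\to s}_{i,q}$ are $\kk[x_i]$-linear. Hence $\{\Phi_t\}$ is a natural isomorphism of $\Zn$-graded $S$-module-valued persistence modules.
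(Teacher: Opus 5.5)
Your proposal is correct and follows essentially the same route as the paper. Both identify multiplication by $x_i$ at the \v{C}ech-chain level under the bijections of Lemma~\ref{lem:chain-split}: it is the \v{C}ech map of the quotient $\kk[\del_{\Delta^t}(i)]\twoheadrightarrow\kk[\link_{\Delta^t}(i)]$ from $x_i$-degree $0$ to $1$, and the plain shift for $c\ge1$; both then pass to cohomology and deduce the compatibility $\rho^q_s\circ\eta^{t\to s}=\zeta^{t\to s}\circ\rho^q_t$ from the commuting square of quotients, and your extra sign check and your check that $\eta^{t\to s}\oplus(\zeta^{t\to s}\otimes\mathrm{id})$ respects the new $x_i$-action are details the paper leaves implicit.
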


\begin{proof}
Under the chain-level decomposition of
Lemma~\ref{lem:chain-split}, the $x_i$-degree-$0$ component of
$\check C^\bullet_T(\kk[\Delta_f^t])$ is identified with
\[
\check C^\bullet_{\mm'}
\bigl(\kk[\del_{\Delta_f^t}(i)]\bigr),
\]
whereas its $x_i$-degree-$1$ component is identified with
\[
x_i\check C^\bullet_{\mm'}
\bigl(\kk[\link_{\Delta_f^t}(i)]\bigr).
\]
Multiplication by $x_i$ sends a monomial in the deletion component to
the corresponding degree-$1$ monomial precisely when its support lies
in the link; otherwise the product is zero. Under the above
identifications, this is exactly the chain map induced by the quotient
\[
\kk[\del_{\Delta_f^t}(i)]
\twoheadrightarrow
\kk[\link_{\Delta_f^t}(i)].
\]
Passing to cohomology gives the map $\rho_t^q$. In every positive
$x_i$-degree, multiplication by $x_i$ is the ordinary shift
\[
v\otimes x_i^c\longmapsto v\otimes x_i^{c+1}.
\]
Therefore, $\Phi_t$ is $S$-linear.

Finally, the square of quotient maps
\[
\begin{tikzcd}
\kk[\del_{\Delta_f^t}(i)]
  \arrow[r,two heads]
  \arrow[d,two heads]
&
\kk[\link_{\Delta_f^t}(i)]
  \arrow[d,two heads]
\\
\kk[\del_{\Delta_f^s}(i)]
  \arrow[r,two heads]
&
\kk[\link_{\Delta_f^s}(i)]
\end{tikzcd}
\]
commutes. Applying $\Hloc{q}{\mm'}(-)$ yields
\[
\rho_s^q\circ\eta^{t\to s}
=
\zeta^{t\to s}\circ\rho_t^q,
\]
which proves naturality.
\end{proof}

\subsection{Persistent per-vertex local cohomology numbers}\label{ssec:numbers}

\begin{proposition}[Reduction to finitely many face-indexed modules]\label{prop:reduction}
Fix a vertex $i$, a degree $q$, and the filtration $(\Delta^t)_{t\in\RR}$ of a monotonic $f$. Let
$\alpha=(c;b)$ with $c\in\ZZ$, $b\in\ZZ^{[n]\setminus\{i\}}$, and set $\sigma:=\supp_-(b)$,
$\Gamma^t:=\del_{\Delta^t}(i)$ if $c=0$ and $\Gamma^t:=\link_{\Delta^t}(i)$ if $c\ge1$.
\begin{enumerate}[label=\textup{(\alph*)},nosep,leftmargin=2.2em]
\item \textup{(Support and finiteness.)} If $c\le-1$ or $\supp_+(b)\ne\varnothing$, then
$\mathbb V^q_{i,\alpha}=0$. Otherwise, for every $t$,
\[
\Hloc{q}{\pp_i}(\kk[\Delta^t])_\alpha\ \cong\ \Hloc{q}{\mm'}(\kk[\Gamma^t])_b\ \cong\
\rH^{\,q-|\sigma|-1}\big(\link_{\Gamma^t}(\sigma);\kk\big),
\]
a finite-dimensional $\kk$-vector space, nonzero only when $\sigma\in\Gamma^t$.
\item \textup{(Constancy.)} When $\supp_+(b)=\varnothing$, $\mathbb V^q_{i,\alpha}$ depends on $b$ only
through $\sigma$, and for $c\ge1$ is independent of $c$; it is therefore canonically isomorphic to the
module at the representative $b=-\mathbf 1_\sigma$, and, for $c\ge1$, at $c=1$.
\item \textup{(Finite family.)} Consequently the $\Zn$-graded family
$\{\mathbb V^q_{i,\alpha}\}_{\alpha\in\Zn}$ reduces, up to canonical isomorphism, to the finite
face-indexed collection
\[
\{\mathbb V^q_{i,(0;-\mathbf 1_\sigma)}:\sigma\in\del_\Delta(i)\}\ \cup\
\{\mathbb V^q_{i,(1;-\mathbf 1_\sigma)}:\sigma\in\link_\Delta(i)\},
\]
each pointwise finite-dimensional. The total sum $\bigoplus_{\alpha\in\Zn}\mathbb V^q_{i,\alpha}$ is
\emph{not} pointwise finite-dimensional whenever a link term is nonzero, since it is repeated in every
$x_i$-degree $c\geq1$, or whenever
$\rH^{\,q-|\sigma|-1}(\link_{\Gamma^t}(\sigma);\kk)\ne0$
for some $\sigma\ne\varnothing$, being then nonzero on the entire orthant
$\{b:\supp_+(b)=\varnothing,\ \supp_-(b)=\sigma\}$; the invariant is the finite family, not that sum.
\end{enumerate}
\end{proposition}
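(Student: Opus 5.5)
The plan is to read everything off Theorem~\ref{thm:summand-iso}(a) and the cochain-level isomorphism $\Psi_a^\bullet$ from the proof of Theorem~\ref{thm:hochster-classical}, applied to $\Gamma^t$ over $S'$.

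\emph{Part (a).} For $c\le-1$, Theorem~\ref{thm:summand-iso}(a) gives $\mathbb V^q_{i,\alpha}=0$ directly. For $c\ge0$ the same result identifies $\mathbb V^q_{i,\alpha}$ with $\mathbb D^q_{i,b}$ or $\mathbb L^q_{i,b}$, i.e.\ with $\Hloc{q}{\mm'}(\kk[\Gamma^t])_b$ levelwise. I then apply Theorem~\ref{thm:hochster-classical} to $\Gamma^t$ over $S'$; Remark~\ref{rem:hochster-source}(a) covers the ghost vertices. This gives vanishing when $\supp_+(b)\neq\varnothing$ or $\sigma\notin\Gamma^t$. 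Otherwise it gives the displayed isomorphism with $\rH^{q-|\sigma|-1}(\link_{\Gamma^t}(\sigma);\kk)$, which is finite-dimensional because $\Gamma^t$ is a finite complex.

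\emph{Part (b).} This is the substantive step, since the transition maps must be matched and not just the dimensions. Fix $\sigma$ and two multidegrees $b,b'$ with $\supp_+=\varnothing$ and $\supp_-=\sigma$. By Lemma~\ref{lem:localization}, the multidegree-$b$ strand of $\check C^\bullet_{\mm'}(\kk[\Gamma^t])$ has basis $\{e_F:\sigma\subseteq F\in\Gamma^t\}$. This index set depends only on $\sigma$, and the \v{C}ech differential has the same signs $(-1)^{\nu_F(j)}$ in both strands. Hence $e_F\mapsto e_F$ is a chain isomorphism between the $b$ and $b'$ strands; equivalently, $(\Psi_{b'})^{-1}\circ\Psi_b$ is such an isomorphism. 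The transition maps induced by $\kk[\Gamma^t]\twoheadrightarrow\kk[\Gamma^s]$ send $x^b$ in summand $F$ to $x^b$ in summand $F$, or to $0$ if $F\notin\Gamma^s$. These maps therefore commute with $e_F\mapsto e_F$, and the identification is natural in $t$. So it yields an isomorphism of persistence modules to the one at $b=-\mathbf 1_\sigma$. Independence of $c\ge1$ is the last assertion of Theorem~\ref{thm:summand-iso}(a). Composing the two identifications gives the canonical isomorphism with the module at $(1;-\mathbf 1_\sigma)$. The only point needing care is checking that the quotient-induced \v{C}ech map is indeed ``identity on surviving basis elements.'' This holds because localization is exact and the map sends monomial classes to monomial classes.

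\emph{Part (c).} By (a) and (b), every nonzero $\mathbb V^q_{i,\alpha}$ is canonically isomorphic to one indexed by $(0;-\mathbf 1_\sigma)$ with $\sigma\in\Gamma^t\subseteq\del_\Delta(i)$, or by $(1;-\mathbf 1_\sigma)$ with $\sigma\in\link_{\Delta^t}(i)\subseteq\link_\Delta(i)$. Each such module is pfd by (a). For the failure of pfd for the total sum, suppose some level $t$ has a nonzero link term. By (b) it then appears in every $c\ge1$, giving an infinite-dimensional sum at level $t$. Similarly, if $\sigma\neq\varnothing$ contributes nonzero cohomology, then the same space appears at every $b$ in the orthant with $\supp_-(b)=\sigma$ and $\supp_+(b)=\varnothing$, which is infinite. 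In both cases the sum at level $t$ is infinite-dimensional.
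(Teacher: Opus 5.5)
Your proposal is correct and follows essentially the paper's argument. Part (a) uses Theorem~\ref{thm:summand-iso}(a) together with Hochster's formula. Part (b) uses the cochain-level identification, natural in the complex: the paper passes through $\Psi$ to the restriction maps on link cochains, while you compare the $b$- and $b'$-strands directly via $e_F\mapsto e_F$, which is the same identification. Part (c) then follows directly from (a) and (b). One minor point: that the quotient-induced \v{C}ech map sends the class of $x^b$ either to itself or to $0$ is just functoriality of localization, not exactness.
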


\begin{proof}
\emph{(a)} The two isomorphisms are the multidegree form of Theorem~\ref{thm:structure} and
Theorem~\ref{thm:hochster-classical}; the target is a reduced cohomology group of the finite complex
$\link_{\Gamma^t}(\sigma)$, hence finite-dimensional, and it vanishes when $\sigma\notin\Gamma^t$
(void link), when $\supp_+(b)\ne\varnothing$ (Theorem~\ref{thm:hochster-classical}), and when $c\le-1$
(Theorem~\ref{thm:structure}). 

\emph{(b)} Suppose $\supp_+(b)=\varnothing$. By the cochain isomorphism
constructed in the proof of Theorem~\ref{thm:hochster-classical}, the
multidegree-$b$ component $\check C^\bullet_{\mm'}(\kk[\Gamma^t])_b$
is naturally isomorphic to
$\widetilde C^{\,\bullet-|\sigma|-1}
\bigl(\link_{\Gamma^t}(\sigma);\kk\bigr)$, where
$\sigma=\supp_-(b)$.
Under this identification, for every $s\leq t$, the cochain map induced
by the surjection $\kk[\Gamma^t]\twoheadrightarrow\kk[\Gamma^s]$
corresponds to the restriction cochain map induced by
$\link_{\Gamma^s}(\sigma)\subseteq\link_{\Gamma^t}(\sigma)$. This
remains valid when $\sigma\notin\Gamma^s$, in which case both target
complexes are zero. Hence the persistence module depends on $b$ only
through $\sigma=\supp_-(b)$. For $c\geq1$, the surjection
\eqref{eq:ring-surj} acts by the same rule in every $x_i$-degree by
Theorem~\ref{thm:summand-iso}(a), so $\mathbb V^q_{i,\alpha}$ is
independent of $c$ in that range. Canonically isomorphic persistence
modules have equal barcodes. 
\emph{(c)} is immediate from (b), the faces of $\del_\Delta(i)$ and
$\link_\Delta(i)$ being finite in number; the non-pfd statement follows
because every nonzero link term is repeated in all $x_i$-degrees
$c\geq1$, while for $\sigma\ne\varnothing$ a nonzero term is repeated
over the entire orthant
$\{b:\supp_+(b)=\varnothing,\ \supp_-(b)=\sigma\}$.
\end{proof}

\begin{definition}[Persistent per-vertex local cohomology numbers]\label{def:numbers}
Let $(\Delta^t)_{t\in\RR}$ be the filtration induced by a monotonic
$f\colon\Delta\to\RR$. For a vertex $i$, a cohomological degree $q$, a
multidegree $\alpha\in\Zn$, and $s\le t$, define
\[
\lambda^{\,q,\,s\to t}_{i,\alpha}
:=\rank\Big(
\theta^{\,t\to s}_{i,q}\colon
\Hloc{q}{\pp_i}(\kk[\Delta^t])_\alpha
\longrightarrow
\Hloc{q}{\pp_i}(\kk[\Delta^s])_\alpha
\Big).
\]
This rank is finite by Proposition~\ref{prop:reduction}(a). If
$\alpha=(c;b)$ and $\sigma=\supp_-(b)$, Proposition~\ref{prop:reduction}
shows that the value vanishes unless $\supp_+(b)=\varnothing$ and $\sigma$ is
a face of $\del_\Delta(i)$ when $c=0$, or of $\link_\Delta(i)$ when $c\ge1$.
In the nonzero cases it depends only on $\sigma$ and on whether $c=0$ or
$c\ge1$. Using the representatives $b=-\mathbf 1_\sigma$ and $c\in\{0,1\}$,
write
\[
\lambda^{\,q,\,s\to t}_{i,\del,\sigma}
:=\lambda^{\,q,\,s\to t}_{i,(0;-\mathbf 1_\sigma)}
\quad(\sigma\in\del_\Delta(i)),
\qquad
\lambda^{\,q,\,s\to t}_{i,\link,\sigma}
:=\lambda^{\,q,\,s\to t}_{i,(1;-\mathbf 1_\sigma)}
\quad(\sigma\in\link_\Delta(i)).
\]
We do not sum over all multidegrees: by Proposition~\ref{prop:reduction}(c),
the resulting total module need not be pointwise finite-dimensional. When $s=t$, these numbers recover \(\dim_\kk\Hloc{q}{\mm'}\bigl(\kk[\del_{\Delta^t}(i)]\bigr)_{-\mathbf 1_\sigma}\) and \(\dim_\kk\Hloc{q}{\mm'}\bigl(\kk[\link_{\Delta^t}(i)]\bigr)_{-\mathbf 1_\sigma}\), respectively.
\end{definition}

\noindent The superscript $s\to t$ records the filtration direction
$\Delta^s\subseteq\Delta^t$, whereas the underlying map
$\theta^{t\to s}_{i,q}$ runs from level $t$ to level $s$. This agrees with
the reversed-arrow convention of \eqref{eq:struct-map} and with the Tor-side
convention of \cite{SuwayyidWeiGraphs}.

\subsection{The persistent links--Hochster formula}\label{ssec:persistent-hochster}

We now identify the persistent numbers with ranks of maps on reduced (co)homology of links and
deletions of \emph{induced subcomplexes of the filtration}. This is the local-cohomology, vertex-
local analogue of the persistent Hochster formula of \cite{SuwayyidWeiGraphs}.

\begin{theorem}[Persistent links--Hochster formula]\label{thm:persistent-hochster}
Let $(\Delta^t)_{t\in\RR}$ be the filtration of a monotonic $f:\Delta\to\RR$, fix $i\in[n]$,
$q\geq0$, $s\le t$, and a multidegree $\alpha=(c;b)$ with $b\in\ZZ^{[n]\setminus\{i\}}$. Write
$\sigma=\supp_-(b)$ and assume $\supp_+(b)=\varnothing$ (all other $\alpha$ give
$\lambda^{q,s\to t}_{i,\alpha}=0$). Set $d=q-|\sigma|-1$. Then:
\[
\lambda^{\,q,\,s\to t}_{i,\alpha}=
\begin{cases}
\rank\Big(\rH^{\,d}\big(\link_{\Delta^t}(\sigma)\cap\del_{\Delta^t}(i);\kk\big)\to
\rH^{\,d}\big(\link_{\Delta^s}(\sigma)\cap\del_{\Delta^s}(i);\kk\big)\Big), & c=0,\\[6pt]
\rank\Big(\rH^{\,d}\big(\link_{\Delta^t}(\sigma\cup\{i\});\kk\big)\to
\rH^{\,d}\big(\link_{\Delta^s}(\sigma\cup\{i\});\kk\big)\Big), & c\geq1,
\end{cases}
\]
where each map is the restriction in reduced cohomology induced by the inclusion of the
corresponding subcomplex of $\Delta^s$ into that of $\Delta^t$ (with the convention $\link_\Gamma(\sigma)=\varnothing_{\mathrm{void}}$ when $\sigma\notin\Gamma$, so
a summand's reduced cohomology is $0$ at any level where its indexing face is not yet present). Equivalently, by duality over $\kk$,
each rank equals the rank of the induced map on reduced \emph{homology} in degree $d$, running in the
filtration direction $\Delta^s\hookrightarrow\Delta^t$.
\end{theorem}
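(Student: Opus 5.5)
The plan is to compose three identifications, each natural in the filtration parameter, and then read off ranks. First, by Theorem~\ref{thm:summand-iso}(a) the persistence module $\mathbb V^q_{i,(c;b)}(f)$ is isomorphic, as an $\RR^{\mathrm{op}}$-indexed module, to $\mathbb D^q_{i,b}(f)$ when $c=0$ and to $\mathbb L^q_{i,b}(f)$ when $c\ge1$. Isomorphic persistence modules have equal ranks of transition maps. Hence $\lambda^{q,s\to t}_{i,\alpha}$ equals the rank of $\eta^{t\to s}$, respectively $\zeta^{t\to s}$, on the multidegree-$b$ piece of $\Hloc{q}{\mm'}(\kk[\Gamma^t])\to\Hloc{q}{\mm'}(\kk[\Gamma^s])$. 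Here $\Gamma^t=\del_{\Delta^t}(i)$ or $\link_{\Delta^t}(i)$. The vanishing for $\supp_+(b)\neq\varnothing$ is already recorded in Proposition~\ref{prop:reduction}(a).

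Second, I would apply the cochain-level isomorphism $\Psi_b^\bullet$ from the proof of Theorem~\ref{thm:hochster-classical}, with $\Gamma=\Gamma^t$ and $\Gamma=\Gamma^s$ over $S'$. Remark~\ref{rem:hochster-source}(a) makes this legitimate even though not every variable of $S'$ need be a vertex. The key check is that the map $\check C^\bullet_{\mm'}(\kk[\Gamma^t])_b\to\check C^\bullet_{\mm'}(\kk[\Gamma^s])_b$ induced by the ring surjection corresponds under $\Psi_b$ to restriction of cochains along $\link_{\Gamma^s}(\sigma)\subseteq\link_{\Gamma^t}(\sigma)$. The argument runs as follows.
\begin{itemize}[nosep]
\item The surjection sends the basis element $e_F$ (the monomial $x^b$ in the summand indexed by $F$) to $e_F$ if $F\in\Gamma^s$, and to $0$ otherwise, since that summand vanishes.
\item The sign $\varepsilon_\sigma(F\setminus\sigma)$ depends only on $\sigma$ and $F$, not on the complex.
\item Therefore $\Psi$ intertwines this map with the restriction map $G^\vee\mapsto G^\vee$ or $0$, which is exactly the cochain restriction.
\end{itemize}
When $\sigma\notin\Gamma^s$ the target complex is zero, which matches the void-link convention. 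The same holds on the source side when $\sigma\notin\Gamma^t$. Passing to cohomology, the rank of the local-cohomology map in degree $q$ equals the rank of the restriction map $\rH^{d}(\link_{\Gamma^t}(\sigma);\kk)\to\rH^{d}(\link_{\Gamma^s}(\sigma);\kk)$, where $d=q-|\sigma|-1$.

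Third, I would substitute the identities $\link_{\del_{\Delta}(i)}(\sigma)=\link_\Delta(\sigma)\cap\del_\Delta(i)$ and $\link_{\link_\Delta(i)}(\sigma)=\link_\Delta(\sigma\cup\{i\})$ from Corollary~\ref{cor:static-comb}, applied at both levels $s$ and $t$. These identifications are equalities of subcomplexes of $[n]$. They are therefore compatible with the inclusions induced by $\Delta^s\subseteq\Delta^t$, so the restriction maps match. For the homology formulation, I would note that over a field the restriction map on reduced cohomology is the linear dual of the map on reduced homology induced by the inclusion. A map and its dual have the same rank.

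I expect the main obstacle to be the naturality of $\Psi$, specifically the sign bookkeeping. One must confirm that the signs $\varepsilon_\sigma$ are intrinsic to the pair $(\sigma,F)$, so that a single choice works uniformly across all levels of the filtration. One must also handle the degenerate cases cleanly: $\sigma$ present at level $t$ but not at level $s$, and the empty face. The remaining steps are bookkeeping on top of Theorem~\ref{thm:summand-iso} and Corollary~\ref{cor:static-comb}.
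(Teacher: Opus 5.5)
Your proposal is correct and follows essentially the same route as the paper. The paper's Step~1 is exactly your naturality check for $\Psi$: each $e_F$ is kept or killed, the signs $\varepsilon_\sigma(F\setminus\sigma)$ do not depend on the ambient complex, and the void-link case gives zero. Its Step~2 is your reduction to the deletion/link summands, done via Theorem~\ref{thm:structure} and Remark~\ref{rem:maps-split} (which Theorem~\ref{thm:summand-iso}(a) packages), followed by the link identities of Corollary~\ref{cor:static-comb} and dualization for the homological form; the only cosmetic omission is that you never state explicitly that $c\le -1$ gives zero, which Theorem~\ref{thm:summand-iso}(a) also covers.
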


\begin{proof}
The proof has two steps: extending the chain-level identification of Theorem~\ref{thm:hochster-classical}
to a \emph{natural} statement in the complex, then applying it to the deletion and link filtrations.

\emph{Step 1: naturality.} By Theorem~\ref{thm:hochster-classical}, for any complex $\Gamma$ on $[n]$ and any multidegree $b$
with $\supp_+(b)=\varnothing$ and $\sigma:=\supp_-(b)\in\Gamma$ there is an isomorphism of
finite-dimensional $\kk$-vector spaces
\begin{equation}\label{eq:grabe}
\Hloc{q}{\mm}(\kk[\Gamma])_b\ \cong\ \rH^{\,q-|\sigma|-1}\big(\link_\Gamma(\sigma);\kk\big).
\end{equation}
We recall the cochain-level construction from the proof of that theorem, since it is its
\emph{naturality} that Step~2 requires. In cohomological degree $p$ the multidegree-$b$ component of
$\check C^\bullet_\mm(\kk[\Gamma])$ is $\bigoplus_{|F|=p}\big(\kk[\Gamma]_{x_F}\big)_b$, where $F$ ranges
over the $p$-element subsets of $[n]$ indexing the \v{C}ech summands $\kk[\Gamma]_{x_F}$ of
\eqref{eq:cech}, with $x_F=\prod_{j\in F}x_j$. By Lemma~\ref{lem:localization}, for this fixed $b$ each
such summand is at most one-dimensional---it is $\kk\cdot x^b$ when $\sigma\subseteq F$ and $F\in\Gamma$,
and $0$ otherwise---so the nonzero basis elements in multidegree $b$ are indexed by the finitely many
faces $F$ of $\Gamma$ containing $\sigma$, and both sides of \eqref{eq:grabe} are finite-dimensional
$\kk$-vector spaces. The correspondence
\[
\{F\in\Gamma:\sigma\subseteq F\}\ \xrightarrow{\ \sim\ }\ \link_\Gamma(\sigma),
\qquad
F\mapsto F\setminus\sigma,
\]
together with the signs introduced in the proof of
Theorem~\ref{thm:hochster-classical}, defines the cochain isomorphism
\[
\Psi_{\Gamma,b}^\bullet\colon
\check C^\bullet_\mm(\kk[\Gamma])_b
\longrightarrow
\widetilde C^{\,\bullet-|\sigma|-1}
\bigl(\link_\Gamma(\sigma);\kk\bigr),
\]
under which the basis element $e_F$ is sent to
$\varepsilon_\sigma(F\setminus\sigma)(F\setminus\sigma)^\vee$. Since
$|F\setminus\sigma|=|F|-|\sigma|$, \eqref{eq:grabe} follows on passing to cohomology.

This isomorphism is natural for inclusions of complexes on $[n]$: for a subcomplex
$\Gamma'\subseteq\Gamma$, the surjection $\kk[\Gamma]\twoheadrightarrow\kk[\Gamma']$ induces on the
multidegree-$b$ components the map carrying the basis monomial indexed by $F$ to itself if
$F\in\Gamma'$ and to $0$ otherwise. Under $\Psi_{\Gamma,b}^\bullet$ and
$\Psi_{\Gamma',b}^\bullet$, this is exactly the restriction cochain map induced by
$\link_{\Gamma'}(\sigma)\subseteq\link_\Gamma(\sigma)$, because the factor
$\varepsilon_\sigma(F\setminus\sigma)$ is independent of the ambient complex.
Passing to cohomology, the square
\[
\begin{tikzcd}
\Hloc{q}{\mm}(\kk[\Gamma])_b \arrow[r,"\cong"] \arrow[d]
&
\rH^{\,q-|\sigma|-1}(\link_\Gamma(\sigma);\kk)
\arrow[d,"\mathrm{restr}"]
\\
\Hloc{q}{\mm}(\kk[\Gamma'])_b \arrow[r,"\cong"]
&
\rH^{\,q-|\sigma|-1}(\link_{\Gamma'}(\sigma);\kk)
\end{tikzcd}
\]
commutes, the left vertical map being induced by
$\kk[\Gamma]\twoheadrightarrow\kk[\Gamma']$ and the right by the link inclusion. When
$\sigma\notin\Gamma'$, the lower row is $0$: by
Theorem~\ref{thm:hochster-classical},
$\Hloc{q}{\mm}(\kk[\Gamma'])_b=0$, while
$\link_{\Gamma'}(\sigma)=\varnothing_{\mathrm{void}}$ gives
$\rH^{\,q-|\sigma|-1}(\link_{\Gamma'}(\sigma);\kk)=0$. Thus the square commutes for every subcomplex
$\Gamma'\subseteq\Gamma$ on $[n]$, whether or not $\sigma\in\Gamma'$; if $\sigma\notin\Gamma$, then
$\sigma\notin\Gamma'$ as well, both ends vanish, and
$\lambda^{q,s\to t}_{i,\alpha}=0$.

\emph{Step 2: application to deletion and link of $i$.} By Theorem~\ref{thm:structure} in the form of
its multidegree statement, and Remark~\ref{rem:maps-split}, the structure map $\theta^{t\to s}_{i,q}$
in multidegree $\alpha=(c;b)$ is: for $c=0$, the map
$\Hloc{q}{\mm'}(\kk[\del_{\Delta^t}(i)])_b\to\Hloc{q}{\mm'}(\kk[\del_{\Delta^s}(i)])_b$ induced by the
inclusion $\del_{\Delta^s}(i)\subseteq\del_{\Delta^t}(i)$;
and for $c\ge1$, the analogous map for $\link_{\Delta^s}(i)\subseteq\link_{\Delta^t}(i)$. Applying the
commuting square of Step~1 with $\Gamma=\del_{\Delta^t}(i)\supseteq\Gamma'=\del_{\Delta^s}(i)$ (resp.\
$\Gamma=\link_{\Delta^t}(i)\supseteq\Gamma'=\link_{\Delta^s}(i)$) and taking ranks, we obtain that
$\lambda^{q,s\to t}_{i,\alpha}=\rank(\theta^{t\to s}_{i,q}|_\alpha)$ equals the rank of the restriction
map on $\widetilde H^{\,q-|\sigma|-1}$ of the corresponding links. Finally, the identifications
\[
\link_{\del_{\Delta'}(i)}(\sigma)=\link_{\Delta'}(\sigma)\cap\del_{\Delta'}(i),
\qquad
\link_{\link_{\Delta'}(i)}(\sigma)=\link_{\Delta'}(\sigma\cup\{i\})
\]
(Corollary~\ref{cor:static-comb}, applied at each filtration level $\Delta'=\Delta^s,\Delta^t$) turn
these into the two displayed formulas. All other multidegrees $\alpha$ (those with
$\supp_+(b)\ne\varnothing$, or with $c\le-1$) give $\Hloc{q}{\pp_i}(\kk[\Delta^\bullet])_\alpha=0$ on
both ends by Theorem~\ref{thm:structure}, hence $\lambda^{q,s\to t}_{i,\alpha}=0$.

For the homological reformulation, recall that over a field $\kk$ the reduced simplicial cochain
complex is the degreewise $\kk$-linear dual of the reduced chain complex, with coboundary the transpose
of boundary; since $Hom_\kk(-,\kk)$ is exact over a field, taking cohomology commutes with
dualization, so $\rH^{\,d}(X;\kk)\cong\big(\rH_{\,d}(X;\kk)\big)^{\ast}$ naturally in $X$ (universal
coefficients over a field). Consequently the restriction map in reduced cohomology induced by a
subcomplex inclusion $\link_{\Gamma^s}(\sigma)\hookrightarrow\link_{\Gamma^t}(\sigma)$ is the
$\kk$-dual of the map in reduced homology induced by the \emph{same} inclusion, the latter running in
the filtration direction $s\to t$. As a linear map of finite-dimensional $\kk$-vector spaces and its
dual have equal rank, the two ranks coincide, which is the homological form of the theorem.
\end{proof}

\begin{remark}[Relation to the Tor-side persistent Hochster formula]\label{rem:tor-side}
The homological reformulation makes the comparison with \cite{SuwayyidWeiGraphs} obvious. Their
persistent Hochster formula expresses Tor-side persistent Betti numbers of $\kk[\Delta^\bullet]$ as ranks of
induced maps on $\rH^{|W|-i-1}((\Delta^\bullet)_W;\kk)$ over induced subcomplexes $W$. Here the
relevant complexes are not induced subcomplexes on a vertex set $W$, but \emph{links and deletions of
a single vertex} $i$ (and their iterated links along $\sigma$). The vertex-prime construction thus
records the filtration's reduced (co)homology localized at the star of $i$, rather than summed over
all $W$. This is the precise sense in which it is a \emph{per-vertex} refinement.
\end{remark}

\subsection{Finiteness and interval decomposition}\label{ssec:finite-type}

We now organize the persistent numbers into persistence modules and show they are interval
decomposable. Fix $i$, $q$, and a multidegree $\alpha$, and consider the family defined at \ref{def:pers-module}
\[
\mathbb V^{q}_{i,\alpha}\ :=\ \Big(\,\Hloc{q}{\pp_i}(\kk[\Delta^t])_\alpha,\ \theta^{t\to s}_{i,q}\,\Big),
\]
indexed by $t\in\RR$ with the reversed structure maps
\eqref{eq:struct-map}. Since $\RR^{\mathrm{op}}$ is totally ordered,
the standard interval-decomposition theorem applies directly. We write
$\kk_J$ for the $\RR^{\mathrm{op}}$-indexed interval module supported
on an interval $J\subseteq\RR$. We record the relevant finiteness.

\begin{theorem}[Finite type and interval decomposition]\label{thm:finite-type}
For every vertex $i$, degree $q$, and multidegree $\alpha\in\Zn$, the
persistence module $\mathbb V^q_{i,\alpha}(f)$ is pointwise
finite-dimensional and constructible with critical values contained in the
finite set
\[
\operatorname{Crit}(f):=\{f(\tau):\tau\in\Delta\}.
\]
Equivalently, as a finite-type $\RR^{\mathrm{op}}$-indexed persistence
module, it has a finite interval decomposition
\[
\mathbb V^q_{i,\alpha}(f)\cong
\bigoplus_{\ell=1}^{N_{i,q,\alpha}}\kk_{J_\ell},
\]
unique up to permutation of the interval summands. The finite family of
isomorphism types that must be retained is precisely the face-indexed family
listed in Proposition~\ref{prop:reduction}(c); in particular, every such
module is $q$-tame.
\end{theorem}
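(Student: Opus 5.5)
The plan is to reduce everything to finite combinatorial data of the filtration and then invoke the standard structure theorem for finite-type persistence modules over a totally ordered index set.

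\emph{Pointwise finiteness.} Proposition~\ref{prop:reduction}(a) already shows that each $\Hloc{q}{\pp_i}(\kk[\Delta^t])_\alpha$ is either $0$ or isomorphic to $\rH^{\,q-|\sigma|-1}(\link_{\Gamma^t}(\sigma);\kk)$, with $\Gamma^t$ the deletion or link of $i$ in $\Delta^t$. This is the reduced cohomology of a finite complex, hence finite-dimensional.

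\emph{Constructibility.} Order $\operatorname{Crit}(f)=\{c_1<\dots<c_m\}$. The complex $\Delta^t_f=\{\sigma:f(\sigma)\le t\}$ is constant for $t\in[c_k,c_{k+1})$, equals $\Delta^{c_m}$ for $t\ge c_m$, and is the void-on-faces complex (only faces with $f\le t$, none) for $t<c_1$. On each such interval the ring surjections \eqref{eq:ring-surj} are identities, so the maps $\theta^{t\to s}_{i,q}$ are identities by functoriality. Hence $\mathbb V^q_{i,\alpha}(f)$ is determined by the finitely many spaces at $t=c_1,\dots,c_m$ (and the value below $c_1$), together with the maps between consecutive critical values. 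One point needs care: the empty face and the singleton convention. If some vertex has not yet entered, the complex is still a complex on $[n]$, with ghost vertices handled as in Remark~\ref{rem:hochster-source}(a). For $t$ below all values the module is whatever $\kk[\Delta^t]$ gives, possibly $\kk[\varnothing]$ or zero; it is constant there in either case, so the intervals of constancy are half-open $[c_k,c_{k+1})$, closed on the left in the filtration parameter.

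\emph{Decomposition.} A pfd module over $\RR^{\mathrm{op}}$ that is constant on the cells of a finite partition is the pullback of a representation of a finite linearly ordered quiver of type $A_m$ (arrows reversed). By Gabriel's theorem, or more generally Crawley-Boevey's theorem for pfd modules over totally ordered sets \cite{ChazalDeSilvaGlisseOudot}, it splits as a finite direct sum of interval modules. Pulling the quiver intervals back along the cell map gives intervals $J_\ell\subseteq\RR$ with endpoints in $\operatorname{Crit}(f)\cup\{\pm\infty\}$. Uniqueness up to permutation follows from Krull--Schmidt (Azumaya), since interval modules have local endomorphism ring $\kk$. Finiteness of $N_{i,q,\alpha}$ is bounded by $\sum_k\dim$ of the finitely many distinct spaces. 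Since the module is pfd it is $q$-tame, every rank of a structure map being finite.

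\emph{Finite family.} The last assertion follows directly from Proposition~\ref{prop:reduction}(b),(c): every $\mathbb V^q_{i,\alpha}$ is canonically isomorphic either to zero or to one of the face-indexed modules at $(0;-\mathbf 1_\sigma)$ or $(1;-\mathbf 1_\sigma)$. Isomorphic modules have the same barcode, so only those isomorphism types need be retained.

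The main obstacle I expect is purely bookkeeping in the constructibility step, specifically the endpoint conventions. I must check that the reversed-arrow indexing produces intervals of the correct half-open type and that nothing changes at non-critical $t$. The sublevel-set definition makes $t\mapsto\Delta^t$ right-continuous, so I will verify directly that all transition maps within $[c_k,c_{k+1})$ are identities.
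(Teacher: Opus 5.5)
Your proposal is correct and follows essentially the paper's proof. Like the paper, you get pointwise finiteness from Proposition~\ref{prop:reduction}(a), observe that $\Delta^t_f$ is constant between consecutive critical values so the structure maps are identities there, decompose the resulting finite-type module into finitely many intervals, and read off the face-indexed family from Proposition~\ref{prop:reduction}(b),(c); you simply make explicit what the paper leaves to citation (the reduction to a type-$A_m$ quiver with Gabriel's theorem, and uniqueness via Krull--Schmidt--Azumaya, where the paper invokes Crawley-Boevey).
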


\begin{proof}
Pointwise finite-dimensionality follows from
Proposition~\ref{prop:reduction}(a). Since $\Delta$ is finite,
$\operatorname{Crit}(f)$ is finite. If an interval $[s,t]$ contains no value
of $\operatorname{Crit}(f)$, then $\Delta^s_f=\Delta^t_f$, and the induced
structure map on every multidegree is the identity. Thus $\mathbb V^q_{i,\alpha}(f)$ is a constructible finite-type
$\RR^{\mathrm{op}}$-indexed persistence module. A pointwise finite-dimensional finite-type persistence
module admits a finite interval decomposition; uniqueness follows from the
standard uniqueness theorem for interval decompositions
\cite[Theorem~1.1]{CrawleyBoevey} (see also
\cite[Theorem~1.1]{BotnanCrawleyBoevey}). The final assertion follows from
Proposition~\ref{prop:reduction}(c), and $q$-tameness follows from pointwise
finite-dimensionality.
\end{proof}

\begin{definition}[Per-vertex local-cohomology barcodes]\label{def:barcode}
For a vertex $i$, degree $q$, and multidegree $\alpha$, let
$\mathbb V^q_{i,\alpha}\cong\bigoplus_{\ell}\kk_{J_\ell}$ be the interval
decomposition supplied by Theorem~\ref{thm:finite-type}, unique by that theorem.
The \emph{per-vertex local-cohomology barcode} in multidegree $\alpha$ is the
multiset of its intervals,
\[
\mathcal B^q_{i,\alpha}(f)\ :=\ \{\,J_\ell\,\}_{\ell},
\qquad\text{where }\ \mathbb V^q_{i,\alpha}\cong\bigoplus_{\ell}\kk_{J_\ell}.
\]
By Proposition~\ref{prop:reduction}(b), $\mathbb V^q_{i,\alpha}$ depends on
$\alpha=(c;b)$ only through the case $c\in\{0\}$ vs.\ $c\ge1$ and the face
$\sigma=\supp_-(b)$, so $\mathcal B^q_{i,\alpha}(f)$ is determined by the squarefree
representatives $\alpha=(0;-\mathbf 1_\sigma)$ and $\alpha=(1;-\mathbf 1_\sigma)$;
we abbreviate the resulting face-indexed barcodes by
\[
\mathcal B^q_{i,\del,\sigma}(f)\ :=\ \mathcal B^q_{i,(0;-\mathbf 1_\sigma)}(f)
\quad(\sigma\in\del_\Delta(i)),
\qquad
\mathcal B^q_{i,\link,\sigma}(f)\ :=\ \mathcal B^q_{i,(1;-\mathbf 1_\sigma)}(f)
\quad(\sigma\in\link_\Delta(i)).
\]
The \emph{per-vertex barcode of $(i,q)$} is the finite face-indexed collection
\(\mathcal B_i^q(f):=\bigl(\{\mathcal B^q_{i,\del,\sigma}(f)\}_{\sigma},\)
\newline
\(\{\mathcal B^q_{i,\link,\sigma}(f)\}_{\sigma}\bigr)\).
Equivalently, by
Theorem~\ref{thm:persistent-hochster}, $\mathcal B^q_{i,\del,\sigma}(f)$ and
$\mathcal B^q_{i,\link,\sigma}(f)$ are the barcodes of the reversed one-parameter
persistence modules with ranks $\lambda^{q,s\to t}_{i,\del,\sigma}$ and
$\lambda^{q,s\to t}_{i,\link,\sigma}$, i.e.\ of the reduced cohomology of the links
and deletions of the induced filtration near vertex $i$. 
\end{definition}

The persistent links--Hochster formula makes the per-vertex barcodes explicit in the
two lowest reduced cohomological degrees.  We record these formulas for the
face-indexed barcodes of Definition~\ref{def:barcode}.

\begin{proposition}
\label{cor:minus-one-bars}
Fix a vertex $i$ and a face
$\sigma\subseteq[n]\setminus\{i\}$, and set $q=|\sigma|$, so that
$q-|\sigma|-1=-1$.
\begin{enumerate}[label=\textup{(\alph*)},leftmargin=2.2em]
	\item If $\sigma\in\del_\Delta(i)$, then the deletion barcode
	$\mathcal B^q_{i,\del,\sigma}(f)$ consists of the single possible interval
	\begin{equation}
	\label{eq:minus-one-deletion}
		\left[
		f(\sigma),
		\min_{\substack{v\notin\sigma\cup\{i\}\\
		\sigma\cup\{v\}\in\Delta}}
		f(\sigma\cup\{v\})
		\right).
	\end{equation}

	\item If $\sigma\in\link_\Delta(i)$, equivalently
	$\tau:=\sigma\cup\{i\}\in\Delta$, then the link barcode
	$\mathcal B^q_{i,\link,\sigma}(f)$ consists of the single possible interval
	\begin{equation}
	\label{eq:minus-one-link}
		\left[
		f(\tau),
		\min_{\substack{v\notin\tau\\
		\tau\cup\{v\}\in\Delta}}
		f(\tau\cup\{v\})
		\right).
	\end{equation}
\end{enumerate}
In both formulas the minimum of the empty set is understood to be $\infty$,
and a zero-length interval is omitted.
\end{proposition}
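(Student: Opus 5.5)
By Theorem~\ref{thm:persistent-hochster} with $d=q-|\sigma|-1=-1$, the module $\mathbb V^q_{i,(0;-\mathbf 1_\sigma)}(f)$ (resp.\ $\mathbb V^q_{i,(1;-\mathbf 1_\sigma)}(f)$) is identified, as an $\RR^{\mathrm{op}}$-indexed persistence module, with $t\mapsto\rH^{-1}(K^t;\kk)$. Here $K^t:=\link_{\Delta^t}(\sigma)\cap\del_{\Delta^t}(i)$ in case (a) and $K^t:=\link_{\Delta^t}(\tau)$ in case (b). The structure maps are the restrictions induced by $K^s\subseteq K^t$. The naturality of this identification is exactly Step~1 of that proof: it is a natural isomorphism of persistence modules, not merely an equality of ranks. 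So the plan is to compute this reduced $(-1)$-cohomology module directly and read off its unique interval decomposition (Theorem~\ref{thm:finite-type}).

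The key elementary fact is the following. For a simplicial complex $K$, the augmented cochain complex has $\widetilde C^{-1}(K)=\kk$ when $\varnothing\in K$ and $0$ when $K$ is void. The coboundary $\widetilde C^{-1}\to\widetilde C^{0}$ sends $1$ to the sum of the vertex cochains. Hence $\rH^{-1}(K;\kk)\cong\kk$ iff $K=\{\varnothing\}$, and $\rH^{-1}(K;\kk)=0$ otherwise, whether $K$ is void or has a vertex. Moreover, for $K'\subseteq K$ both equal to $\{\varnothing\}$, the restriction on $\widetilde C^{-1}$ is the identity of $\kk$, so the induced map on $\rH^{-1}$ is an isomorphism.

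Next we determine when $K^t=\{\varnothing\}$. In case (a), $\varnothing\in K^t$ iff $\sigma\in\Delta^t$, i.e.\ iff $t\ge f(\sigma)$. A vertex $v$ lies in $K^t$ iff $v\notin\sigma\cup\{i\}$ and $\sigma\cup\{v\}\in\Delta^t$, i.e.\ iff $t\ge f(\sigma\cup\{v\})$. So $K^t=\{\varnothing\}$ exactly for $t\in[f(\sigma),m)$, where $m=\min f(\sigma\cup\{v\})$ over the indicated $v$, with $m=\infty$ if there are none. Monotonicity of $f$ gives $f(\sigma)\le f(\sigma\cup\{v\})$, so this is a genuine, possibly empty, interval.

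Case (b) is identical with $\tau$ in place of $\sigma$. Here $\varnothing\in\link_{\Delta^t}(\tau)$ iff $\tau\in\Delta^t$, and $v\in\link_{\Delta^t}(\tau)$ iff $v\notin\tau$ and $\tau\cup\{v\}\in\Delta^t$. The excluded index $i$ is already in $\tau$, which matches \eqref{eq:minus-one-link}.

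Finally, the module is $\kk$ on the interval $J=[f(\sigma),m)$ (resp.\ $[f(\tau),m')$) and $0$ elsewhere. All transition maps between points of $J$ are isomorphisms, since $J$ is convex and both ends are $\{\varnothing\}$. The module is therefore isomorphic to the interval module $\kk_J$, and by the uniqueness in Theorem~\ref{thm:finite-type} its barcode is $\{J\}$, or empty when $J$ has zero length. The only point needing care is the bookkeeping of the augmented degree $-1$ and of the void complex, as in Remark~\ref{rem:hochster-source}(b). Specifically, one must confirm that the void complex and complexes with vertices both give $0$, and that the maps between nonzero stages are identities on the augmentation coordinate. I expect this to be the main (mild) obstacle; everything else follows directly from Theorem~\ref{thm:persistent-hochster}.
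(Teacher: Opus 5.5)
Your proposal is correct and follows the paper's proof. Both use Theorem~\ref{thm:persistent-hochster} to reduce to the reversed module $t\mapsto\rH^{-1}(K^t;\kk)$, use that $\rH^{-1}(K;\kk)\cong\kk$ exactly when $K=\{\varnothing\}$, and read off the interval on which $K^t=\{\varnothing\}$. Your explicit check that the transition maps on that interval are identities on the augmentation coordinate, so that the module really is $\kk_J$, spells out a step the paper leaves implicit.
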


\begin{proof}
By Theorem~\ref{thm:persistent-hochster}, the deletion barcode is the barcode of
the reversed reduced-cohomology persistence module
\[
\widetilde H^{-1}
\bigl(
\link_{\Delta^t}(\sigma)\cap\del_{\Delta^t}(i);\kk
\bigr),
\]
while the link barcode is the barcode of
\[
\widetilde H^{-1}
\bigl(
\link_{\Delta^t}(\sigma\cup\{i\});\kk
\bigr).
\]
For a simplicial complex $K$, one has
$\widetilde H^{-1}(K;\kk)\cong\kk$ precisely when
$K=\{\varnothing\}$, and it is zero for both the void complex and every
complex containing a vertex.

For the deletion summand, the empty face enters
$\link_{\Delta^t}(\sigma)\cap\del_{\Delta^t}(i)$ exactly when
$\sigma$ enters the filtration, namely at $t=f(\sigma)$.  The complex remains
equal to $\{\varnothing\}$ until the first vertex $v\neq i$ satisfying
$\sigma\cup\{v\}\in\Delta$ appears in the link, which occurs at
$t=f(\sigma\cup\{v\})$.  This gives \eqref{eq:minus-one-deletion}.

For the link summand, writing $\tau=\sigma\cup\{i\}$, the empty face enters
$\link_{\Delta^t}(\tau)$ at $t=f(\tau)$ and survives until the first vertex
$v\notin\tau$ with $\tau\cup\{v\}\in\Delta$ appears.  This gives
\eqref{eq:minus-one-link}.  Since $f$ is monotonic, no higher-dimensional
coface can enter before all of its one-vertex subcofaces.
\end{proof}

\begin{proposition}
\label{cor:msf-bars}
Fix a vertex $i$ and a face
$\sigma\subseteq[n]\setminus\{i\}$, and set $q=|\sigma|+1$, so that
$q-|\sigma|-1=0$.  Let $(K^t)_{t\in\RR}$ denote either of the two filtrations
\[
K^t_{\del}
=
\link_{\Delta^t}(\sigma)\cap\del_{\Delta^t}(i),
\qquad
K^t_{\link}
=
\link_{\Delta^t}(\sigma\cup\{i\}),
\]
corresponding respectively to
$\mathcal B^q_{i,\del,\sigma}(f)$ and
$\mathcal B^q_{i,\link,\sigma}(f)$.

Assume that every vertex of the final $1$-skeleton
$G=(V,E)$ of $K^\bullet$ appears at the same filtration value $b$.
For each edge $e\in E$, let $w(e)$ be its filtration value, and let $F$ be any
minimum spanning forest of the edge-weighted graph $(G,w)$.  If $c$ is the
number of connected components of $G$, then the corresponding per-vertex
barcode is
\begin{equation}
\label{eq:msf-barcode}
	\bigl\{[b,w(e)):e\in F\bigr\}
	\ \cup\
	\bigl\{[b,\infty)^{\,c-1}\bigr\},
\end{equation}
with zero-length intervals omitted.
\end{proposition}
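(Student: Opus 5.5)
By Theorem~\ref{thm:persistent-hochster} with $d=q-|\sigma|-1=0$, the barcode $\mathcal B^q_{i,\del,\sigma}(f)$ (resp.\ $\mathcal B^q_{i,\link,\sigma}(f)$) is the barcode of the reversed-arrow module $t\mapsto\rH^{0}(K^t;\kk)$, whose structure maps are restrictions along $K^s\subseteq K^t$. By the duality statement at the end of that theorem, all ranks $\rank(\rH^0(K^t)\to\rH^0(K^s))$ agree with the ranks of $\rH_0(K^s)\to\rH_0(K^t)$ in the forward direction. Persistence modules of finite type over a totally ordered index set are determined up to isomorphism by their rank functions. It therefore suffices to compute the barcode of the ordinary forward module $t\mapsto\rH_0(K^t;\kk)$ and to check that its intervals, read as subsets of $\RR$, are the ones in \eqref{eq:msf-barcode}. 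This reduces the proposition to the classical statement that $0$-dimensional persistence is computed by Kruskal's algorithm.

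Next, $\rH_0(K^t)$ depends only on the $1$-skeleton of $K^t$. For $t<b$ the complex $K^t$ has no vertices, so $\rH_0=0$; the $\rH^{-1}$ class contributed by $K^t=\{\varnothing\}$ lives in a different degree and is irrelevant here. For $t\ge b$ every vertex of $V$ is present, and the $1$-skeleton of $K^t$ is the graph $G^t=(V,\{e:w(e)\le t\})$. Monotonicity of $f$ guarantees $w(e)\ge b$ for each edge. Thus $\dim\rH_0(K^t)=\#\pi_0(G^t)-1$ for $t\ge b$, and the map $\rH_0(K^s)\to\rH_0(K^t)$ is induced by merging components. Its rank is $\#\pi_0(G^t)-1$, since the map on unreduced $H_0$ is surjective.

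I then run Kruskal's algorithm. Process the edges in order of $w$, breaking ties in a way consistent with $F$; any minimum spanning forest arises from some tie-breaking. An edge either joins two components, in which case it belongs to $F$, or closes a cycle and leaves $\rH_0$ unchanged. At time $t$ the number of components is $|V|-\#\{e\in F:w(e)\le t\}$, because the Kruskal forest restricted to weights $\le t$ spans the components of $G^t$. This holds for every minimum spanning forest, since the multiset of weights is the same for all of them.

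Finally I compare rank functions. The candidate barcode is $\{[b,w(e)):e\in F\}\cup\{[b,\infty)^{c-1}\}$. There are $|F|+c-1=|V|-1$ intervals, and for $b\le s\le t$ the number containing both $s$ and $t$ is $|V|-1-\#\{e\in F:w(e)\le t\}=\#\pi_0(G^t)-1$. This matches the rank computed above. Both sides vanish when $s<b$, and bars with $w(e)=b$ have length zero and are omitted. Equality of rank functions then gives the barcode by uniqueness, using Theorem~\ref{thm:finite-type}.

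The only delicate point I expect is the bookkeeping of interval endpoints under reversal, namely that the $\RR^{\mathrm{op}}$ module has intervals $[b,w(e))$ rather than $(b,w(e)]$. I would settle this by computing pointwise dimensions directly: $\dim\rH^0(K^t)$ equals the number of bars $[b,w)$ containing $t$, closed at $b$ because $K^b$ already has all vertices, and open at $w(e)$ because the merge has already happened at $t=w(e)$. Consistency with Proposition~\ref{cor:minus-one-bars} is a useful sanity check.
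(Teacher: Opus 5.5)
Your proposal is correct and follows essentially the same route as the paper. Both proofs use the persistent links--Hochster formula to reduce to $\widetilde H^0$ of the $1$-skeleton of $K^t$, then run Kruskal's algorithm. Your rank-function comparison and the tie-breaking (equal weight-multiset) remark for an arbitrary minimum spanning forest make rigorous what the paper states informally, namely that each merging edge removes exactly one class at its weight.
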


\begin{proof}
By Theorem~\ref{thm:persistent-hochster}, the relevant per-vertex barcode is the
barcode of the reversed persistence module
$\widetilde H^0(K^t;\kk)$.  Since $H^0$ depends only on the $1$-skeleton,
higher-dimensional simplices play no role.

At the common vertex-birth value $b$, the reduced cohomology has dimension
$|V|-1$.  Process the edges in nondecreasing order of $w(e)$, as in Kruskal's
algorithm.  Whenever an edge joins two previously distinct connected
components, the number of components decreases by one and one independent
class in $\widetilde H^0$ disappears at that edge weight.  The edges producing
such mergers are precisely the edges of a minimum spanning forest $F$.
Edges whose endpoints are already connected do not change
$\widetilde H^0$.

Since $F$ has $|V|-c$ edges, these give the finite intervals
$[b,w(e))$, $e\in F$.  The final graph has $c$ connected components, so
$\dim_\kk\widetilde H^0(G;\kk)=c-1$, giving the remaining $c-1$ essential
intervals $[b,\infty)$.  This proves \eqref{eq:msf-barcode}.
\end{proof}

\subsection{Stability}\label{ssec:stability}

We prove a stability theorem for the per-vertex barcodes. We use the interleaving
formalism for persistence modules and the isometry theorem of Chazal--de Silva--Glisse--Oudot, as in
Suwayyid--Wei. We first record how monotone functions close in sup-norm induce interleaved
filtrations, then transport the interleaving through $\Hloc{q}{\pp_i}$.

Throughout, $f,g:\Delta\to\RR$ are monotonic and $\delta:=\|f-g\|_\infty=\sup_{\sigma\in\Delta}
|f(\sigma)-g(\sigma)|$.

\begin{lemma}[Interleaved filtrations]\label{lem:interleave-filt}
For all $t\in\RR$,
\[
\Delta^t_f\subseteq\Delta^{t+\delta}_g\subseteq\Delta^{t+2\delta}_f .
\]
Consequently, applying the deletion (resp.\ link) of a fixed vertex $i$, which preserves inclusions
of complexes on $[n]$,
\[
\del_{\Delta^t_f}(i)\subseteq\del_{\Delta^{t+\delta}_g}(i)\subseteq\del_{\Delta^{t+2\delta}_f}(i),
\qquad
\link_{\Delta^t_f}(i)\subseteq\link_{\Delta^{t+\delta}_g}(i)\subseteq\link_{\Delta^{t+2\delta}_f}(i).
\]
\end{lemma}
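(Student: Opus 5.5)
The plan is to check the first chain of inclusions directly from the definition of the sublevel filtrations, and then obtain the deletion and link statements by monotonicity of these operations under inclusion.

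For the first inclusion, take $\sigma\in\Delta^t_f$, so that $f(\sigma)\le t$. Since $|f(\sigma)-g(\sigma)|\le\delta$, we get
\[
g(\sigma)\le f(\sigma)+\delta\le t+\delta,
\]
and hence $\sigma\in\Delta^{t+\delta}_g$. For the second inclusion, take $\sigma\in\Delta^{t+\delta}_g$. The same bound with the roles of $f$ and $g$ exchanged gives
\[
f(\sigma)\le g(\sigma)+\delta\le t+2\delta,
\]
so $\sigma\in\Delta^{t+2\delta}_f$. All three sets are subcomplexes of $\Delta$ on the fixed vertex set $[n]$, because $f$ and $g$ are monotonic, so these are inclusions of complexes and not merely of face sets. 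Since $\Delta$ is finite, $\delta$ is a finite maximum, and no limiting argument is needed.

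For the consequences, I will use the observation already made in Remark~\ref{rem:maps-split}: for complexes $\Gamma'\subseteq\Gamma$ on $[n]$, one has $\del_{\Gamma'}(i)\subseteq\del_\Gamma(i)$ and $\link_{\Gamma'}(i)\subseteq\link_\Gamma(i)$. The deletion case holds because $\del_{\Gamma'}(i)$ consists of the faces of $\Gamma'$ avoiding $i$, and each of these is a face of $\Gamma$ avoiding $i$. The link case holds because $\tau\cup\{i\}\in\Gamma'$ implies $\tau\cup\{i\}\in\Gamma$. Applying this monotonicity to each of the two inclusions $\Delta^t_f\subseteq\Delta^{t+\delta}_g$ and $\Delta^{t+\delta}_g\subseteq\Delta^{t+2\delta}_f$ gives both displayed chains.

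There is no real obstacle here. The only point needing care is that the inclusions are of subcomplexes on the same vertex set $[n]$, so that the Stanley--Reisner surjections used in the subsequent interleaving argument are all defined over the same ring $S'$.
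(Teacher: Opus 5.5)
Your proposal is correct and follows the paper's proof essentially verbatim: the same one-line sup-norm estimate $g(\sigma)\le f(\sigma)+\delta$ for the first inclusion, the symmetric argument for the second, and then monotonicity of deletion and link under inclusion of complexes on the fixed vertex set $[n]$. Your explicit verification of that monotonicity just spells out what the paper states in a single sentence.
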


\begin{proof}
If $\sigma\in\Delta^t_f$ then $f(\sigma)\le t$, so $g(\sigma)\le f(\sigma)+\delta\le t+\delta$, i.e.\
$\sigma\in\Delta^{t+\delta}_g$; the second inclusion is symmetric. Deletion is the induced
subcomplex on $[n]\setminus\{i\}$ and link is $\{\tau\not\ni i:\tau\cup\{i\}\in\Delta'\}$; both are
monotone in $\Delta'$ under inclusion of complexes on the fixed vertex set, giving the displayed
chains.
\end{proof}

We recall the two external ingredients we use.

\begin{definition}[Interleaving of persistence modules {\cite[\S3]{ChazalDeSilvaGlisseOudot}}]\label{def:interleaving}
Let $\mathbb U=(U_t,u^s_t)$ and $\mathbb W=(W_t,w^s_t)$ be $\RR$-indexed persistence modules and
$\delta\ge0$. A \emph{$\delta$-interleaving} is a pair of families of linear maps
$\varphi_t:U_t\to W_{t+\delta}$ and $\psi_t:W_t\to U_{t+\delta}$ ($t\in\RR$) such that:
(a) $\varphi$ and $\psi$ are morphisms of persistence modules of degree $\delta$, i.e.\
$w^{s+\delta}_{t+\delta}\varphi_s=\varphi_t u^s_t$ and $u^{s+\delta}_{t+\delta}\psi_s=\psi_t w^s_t$
for all $s\le t$; and (b) the two composites equal the internal transition maps over $2\delta$:
$\psi_{t+\delta}\varphi_t=u^t_{t+2\delta}$ and $\varphi_{t+\delta}\psi_t=w^t_{t+2\delta}$ for all $t$.
The \emph{interleaving distance} is $d_i(\mathbb U,\mathbb W)=\inf\{\delta\ge0:\mathbb U,\mathbb W\text{ are }\delta\text{-interleaved}\}$.
\end{definition}

\begin{theorem}[Isometry/stability theorem {\cite[Thms.~4.9, 4.11, 5.14]{ChazalDeSilvaGlisseOudot}}]\label{thm:cdsgo}
Let $\mathbb U,\mathbb W$ be $q$-tame $\RR$-indexed persistence modules. If $\mathbb U$ and $\mathbb W$
are $\delta$-interleaved, then there is a $\delta$-matching between their persistence diagrams
$\mathrm{dgm}(\mathbb U)$ and $\mathrm{dgm}(\mathbb W)$; consequently
$d_b(\mathrm{dgm}(\mathbb U),\mathrm{dgm}(\mathbb W))\le d_i(\mathbb U,\mathbb W)$.
\textup{(}$q$-tameness guarantees the diagrams exist \cite[\S2.8, \S3]{ChazalDeSilvaGlisseOudot}.\textup{)}
\end{theorem}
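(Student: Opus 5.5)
Since this is the external isometry theorem of Chazal--de Silva--Glisse--Oudot, my plan is to reconstruct its proof in the $q$-tame generality, then note a shortcut for the finite-type modules produced by Theorem~\ref{thm:finite-type}. I will encode each module by its rank function $r_{\mathbb U}(s,t)=\rank(u^s_t)$. For a rectangle $R=[a,b]\times[c,d]$ with $a<b\le c<d$ in the extended half-plane, I will use the persistence measure
\[
\mu_{\mathbb U}(R)=r_{\mathbb U}(b,c)-r_{\mathbb U}(a,c)-r_{\mathbb U}(b,d)+r_{\mathbb U}(a,d).
\]
$q$-tameness, meaning $u^s_t$ has finite rank whenever $s<t$, is exactly what makes $\mu_{\mathbb U}$ finite on every rectangle strictly above the diagonal. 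Additivity of $\mu_{\mathbb U}$ under splitting rectangles then defines $\mathrm{dgm}(\mathbb U)$ as the locally finite multiset whose point counts in rectangles are $\mu_{\mathbb U}$; this is the existence statement attributed to \cite[\S2.8, \S3]{ChazalDeSilvaGlisseOudot}.

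\textbf{Step 1: box inequality.} Given a $\delta$-interleaving $(\varphi,\psi)$, I will show
\[
\mu_{\mathbb U}(R)\le \mu_{\mathbb W}(R^\delta),
\]
where $R^\delta$ is $R$ thickened by $\delta$ on each side, provided $R^\delta$ stays above the diagonal. The route is to factor $u^{b}_{c}$ through $\varphi_b$ and $\psi_{c-\delta}$ using axiom (b) of Definition~\ref{def:interleaving}. One then compares ranks in the $2\times 2$ diagram of maps among levels $a,b,c,d$ and their shifts. The four-term alternating sum is the dimension of a subquotient, so it behaves monotonically under such factorizations.

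\textbf{Step 2: interpolation lemma.} I will construct a one-parameter family $\mathbb U_x$, $x\in[0,\delta]$, with $\mathbb U_0=\mathbb U$ and $\mathbb U_\delta=\mathbb W$, such that $\mathbb U_x$ and $\mathbb U_y$ are $|x-y|$-interleaved. Following CDSGO, the construction first encodes the interleaving as a module over the shifted-diagonal strip poset in $\RR^2$, then restricts to lines $y=x+\mathrm{const}$. Intermediate modules are obtained by left Kan extension (or images) along the shears, and each $\mathbb U_x$ remains $q$-tame.

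\textbf{Step 3: continuity and assembly, the main obstacle.} Passing from the local box inequalities to a genuine $\delta$-matching is where I expect the real difficulty. Locally, the box inequality in both directions shows that for $|x-y|$ small, every point of $\mathrm{dgm}(\mathbb U_x)$ at distance greater than $\varepsilon$ from the diagonal has exactly as many partners in a small box of $\mathrm{dgm}(\mathbb U_y)$. Hall's marriage theorem then yields an $|x-y|$-matching on the part of the diagrams far from the diagonal. I will compose these matchings along a fine partition of $[0,\delta]$ and take $\varepsilon\to0$, using local finiteness together with a compactness (diagonal-subsequence) argument to extract a global $\delta$-matching. Since $\mathbb U,\mathbb W$ are $\delta'$-interleaved for every $\delta'$ slightly above $d_i$ and matchings are closed under such limits, we get $d_b\le d_i$.

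For the purposes of this paper there is a shortcut: every module to which we apply the theorem is finite type by Theorem~\ref{thm:finite-type}. In that case I would instead invoke the induced-matching proof of Bauer--Lesnick. It matches the barcode of $\operatorname{im}\varphi$ to those of $\mathbb U$ and $\mathbb W[\delta]$ via canonical injection and surjection matchings, and checks that the interleaving axioms force all unmatched bars to have length at most $2\delta$. This avoids the interpolation and compactness steps entirely.
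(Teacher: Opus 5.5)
The paper does not prove this statement. It lists it among the ``two external ingredients'' and imports it from \cite{ChazalDeSilvaGlisseOudot} by citation alone, so your Steps 1--3 are being compared with the cited argument rather than anything in the paper. They outline that argument faithfully: the box lemma comes from the factorization $u^b_c=\psi_{c-\delta}\,w^{b+\delta}_{c-\delta}\,\varphi_b$, valid once $b+\delta\le c-\delta$. Interpolation extends the interleaving from two shifted diagonals to the strip between them. Local continuity of diagrams is then assembled over a partition of $[0,\delta]$ chosen by compactness, and the limit $\varepsilon\to0$ absorbs the points that cross the $\varepsilon$-cutoff between consecutive diagrams. This route proves the statement in the $q$-tame generality in which it is stated, at the cost of the rectangle-measure machinery and the compactness step. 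One simplification: the final inequality $d_b\le d_i$ needs no closure of matchings under limits. A $\delta'$-interleaving, hence a $\delta'$-matching, exists for every $\delta'>d_i$, and $d_b$ is an infimum.

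Your Bauer--Lesnick alternative is a genuinely different route. It is purely algebraic and builds a canonical matching from the single morphism $\varphi\colon\mathbb U\to\mathbb W[\delta]$. It uses $\psi$ only to show that $\ker\varphi$ and $\operatorname{coker}\varphi$ are $2\delta$-trivial, which bounds both the endpoint shifts of matched bars and the lengths of unmatched bars. It needs no interpolation and no limiting argument. What it gives up is the $q$-tame generality, since induced matchings are set up for pointwise finite-dimensional modules. That loss does not matter here, because every module the paper feeds into Theorem~\ref{thm:cdsgo} is of finite type: the homology modules $\mathbf H_f,\mathbf H_g$ in the proof of Theorem~\ref{thm:stability}, and the image modules of Corollary~\ref{thm:attachment-stability} after reindexing $\RR^{\mathrm{op}}$ by $t\mapsto -t$. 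So for the paper's purposes your shortcut is the more economical self-contained justification, while your first route is what the theorem as literally stated requires.
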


\begin{definition}[Metric on the face-indexed collection]\label{def:coll-metric}
Let $\mathcal I_i=\del_\Delta(i)\sqcup\link_\Delta(i)$ be the fixed finite index set of
Definition~\ref{def:barcode}, and for $\tau\in\mathcal I_i$ write $\mathcal B^q_{i,\tau}$ for the
corresponding component of $\mathcal B^q_i$ \textup{(}i.e.\ $\mathcal B^q_{i,\del,\sigma}$ or
$\mathcal B^q_{i,\link,\sigma}$\textup{)}. For two such collections indexed by the same set
$\mathcal I_i$, define
\[
d_{\mathrm{coll}}\big(\mathcal B_i^q(f),\mathcal B_i^q(g)\big)\ :=\
\max_{\tau\in\mathcal I_i}\ d_b\big(\mathcal B^q_{i,\tau}(f),\mathcal B^q_{i,\tau}(g)\big).
\]
The maximum exists because $\mathcal I_i$ is finite; as $\mathcal I_i$ is fixed
\textup{(}it depends on $\Delta$ and $i$, not on $f$ or $g$\textup{)}, $d_{\mathrm{coll}}$ is a metric
on $\mathcal I_i$-indexed barcode collections.
\end{definition}

\begin{theorem}[Stability of per-vertex local-cohomology barcodes]\label{thm:stability}
Let $f,g:\Delta\to\RR$ be monotonic with induced filtrations $(\Delta^t_f)$, $(\Delta^t_g)$. For
every vertex $i$, degree $q$, and multidegree $\alpha\in\Zn$,
\[
d_b\!\big(\mathcal B^q_{i,\alpha}(f),\,\mathcal B^q_{i,\alpha}(g)\big)\ \le\ \|f-g\|_\infty,
\]
and likewise for each of the face-indexed barcodes $\mathcal B^q_{i,\del,\sigma}$ and
$\mathcal B^q_{i,\link,\sigma}$ comprising $\mathcal B^q_i$ (Definition~\ref{def:barcode}). Here $d_b$
is the bottleneck distance.
\end{theorem}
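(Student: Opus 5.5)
We prove the bound by building a $\delta$-interleaving, with $\delta:=\|f-g\|_\infty$, between $\mathbb V^q_{i,\alpha}(f)$ and $\mathbb V^q_{i,\alpha}(g)$. We then apply Theorem~\ref{thm:cdsgo}. Those modules are $q$-tame by Theorem~\ref{thm:finite-type}.

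One bookkeeping point comes first, because the modules are $\RR^{\mathrm{op}}$-indexed while Definition~\ref{def:interleaving} is stated for $\RR$-indexed modules. We reindex by $t\mapsto -t$. This turns each $\mathbb V^q_{i,\alpha}(f)$ into an $\RR$-indexed module $\widehat{\mathbb V}_f$ with $(\widehat{\mathbb V}_f)_u=\Hloc{q}{\pp_i}(\kk[\Delta^{-u}_f])_\alpha$. Its transition maps for $u\le u'$ are $\theta^{-u\to -u'}$. The reindexing reflects every interval and preserves bottleneck distance, so it suffices to bound $d_b$ for the reindexed modules.

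\emph{Constructing the interleaving maps.} By Lemma~\ref{lem:interleave-filt} we have inclusions
\[
\Delta^{t-\delta}_g\subseteq\Delta^{t}_f\subseteq\Delta^{t+\delta}_g
\qquad\text{and}\qquad
\Delta^{t-\delta}_f\subseteq\Delta^{t}_g .
\]
Each inclusion $\Delta'\subseteq\Delta''$ of complexes on $[n]$ gives a $\Zn$-graded surjection $\kk[\Delta'']\twoheadrightarrow\kk[\Delta']$. Applying $\Hloc{q}{\pp_i}(-)_\alpha$ to it gives a map in the reversed direction. We define
\[
\varphi_u\colon \Hloc{q}{\pp_i}(\kk[\Delta^{-u}_f])_\alpha\to\Hloc{q}{\pp_i}(\kk[\Delta^{-u-\delta}_g])_\alpha
\]
as the map induced by $\Delta^{-u-\delta}_g\subseteq\Delta^{-u}_f$. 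We define $\psi_u$ symmetrically, from $\Delta^{-u-\delta}_f\subseteq\Delta^{-u}_g$. In the reindexed variable, both maps have degree $+\delta$.

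\emph{Checking the interleaving axioms.} All maps involved, including $\varphi$, $\psi$ and the internal transition maps, are induced by inclusions among the complexes $\Delta^\bullet_f$ and $\Delta^\bullet_g$ inside $\Delta$. A composite of two of them corresponds to the composite inclusion. Local cohomology is a covariant functor on these ring surjections, and the surjections compose strictly. Hence any two composites induced by the same inclusion agree. This gives both the naturality squares (a) and the identities (b) of Definition~\ref{def:interleaving}; for example, $\psi_{u+\delta}\varphi_u$ is induced by $\Delta^{-u-2\delta}_f\subseteq\Delta^{-u}_f$, which is exactly the internal transition map over $2\delta$. This is the same functoriality already used to establish $\theta^{t\to s}\circ\theta^{u\to t}=\theta^{u\to s}$ in \S\ref{ssec:filtration}. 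Theorem~\ref{thm:cdsgo} then gives $d_b\le d_i\le\delta$.

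\emph{Face-indexed barcodes.} These follow in one of two ways. First, $\mathcal B^q_{i,\del,\sigma}$ and $\mathcal B^q_{i,\link,\sigma}$ are by definition the barcodes $\mathcal B^q_{i,\alpha}$ at $\alpha=(0;-\mathbf 1_\sigma)$ and $\alpha=(1;-\mathbf 1_\sigma)$, so they are special cases of the above. Alternatively, one can run the same argument directly on $\mathbb D^q_{i,\sigma}$ and $\mathbb L^q_{i,\sigma}$, using the deletion and link chains of Lemma~\ref{lem:interleave-filt}.

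\emph{Main obstacle.} The only delicate point is one of conventions. We must get the direction of the reversed arrows right and confirm that reindexing by $t\mapsto-t$ turns $\RR^{\mathrm{op}}$-interleavings into $\RR$-interleavings of the same $\delta$. We also need to note that Theorem~\ref{thm:cdsgo} applies to the reflected modules, which remain $q$-tame, with the same bottleneck distance.
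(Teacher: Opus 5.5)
Your proof is correct, but it takes a more direct route than the paper's. The paper never interleaves the local-cohomology modules themselves. It fixes a face $\sigma$ and uses the natural cochain-level Hochster isomorphism (Step~1 of Theorem~\ref{thm:persistent-hochster}) to replace $\mathbb V^q_{i,(0;-\mathbf 1_\sigma)}(f)$ by the reduced-cohomology module of $K_{\del}(\Delta^t_f)=\link_{\Delta^t_f}(\sigma)\cap\del_{\Delta^t_f}(i)$, and similarly for $K_{\link}$. It then dualizes over $\kk$ to an $\RR$-indexed reduced-homology module; this plays the role of your reflection $t\mapsto -t$. Next it observes that $\Delta'\mapsto K_{\del}(\Delta')$ is monotone, so the $K$-filtrations inherit the $\delta$-interleaving of Lemma~\ref{lem:interleave-filt}. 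Finally it applies $\rH_d$ and Theorem~\ref{thm:cdsgo}. A general $\alpha$ is left to reduce to these face-indexed cases via Proposition~\ref{prop:reduction}.

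You instead use only that $\Delta'\mapsto\Hloc{q}{\pp_i}(\kk[\Delta'])_\alpha$ is a contravariant functor on the inclusion poset of subcomplexes of $\Delta$, since canonical quotients compose to canonical quotients. In your argument the structure theorem and Hochster's formula enter only through pointwise finite-dimensionality, i.e.\ $q$-tameness via Theorem~\ref{thm:finite-type}. Your version is shorter and treats every $\alpha\in\Zn$ uniformly. Also, the interleaving maps are induced by $S$-linear maps on the whole $\Zn$-graded module, so they commute with multiplication by $x_i$. The same argument therefore also yields the arrow interleaving of Corollary~\ref{thm:attachment-stability}.

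The paper's version, in exchange, exhibits the per-vertex barcodes explicitly as ordinary persistent (co)homology barcodes of link/deletion filtrations. The bound is then visibly the classical stability of persistent homology. The same $K$-level interleavings are also what the paper reuses for the attachment and Boolean-edge stability statements.
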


\begin{proof}
Fix $i,q$ and a face $\sigma$; set $d=q-|\sigma|-1$ and $\delta=\|f-g\|_\infty$. For a complex
$\Delta'$ on $[n]$ write $K_{\del}(\Delta')=\link_{\Delta'}(\sigma)\cap\del_{\Delta'}(i)$ and
$K_{\link}(\Delta')=\link_{\Delta'}(\sigma\cup\{i\})$, the two subcomplexes of
Theorem~\ref{thm:persistent-hochster} (Corollary~\ref{cor:static-comb}). We treat the deletion barcode;
the link barcode is identical with $K_{\del}$ replaced by $K_{\link}$.

By the cochain-level naturality of Theorem~\ref{thm:persistent-hochster} (Step~1 of its proof),
$\mathbb V^q_{i,(0;-\mathbf 1_\sigma)}(f)$ is isomorphic, as an $\RR^{\mathrm{op}}$-indexed persistence
module, to $\big(\rH^{d}(K_{\del}(\Delta^t_f);\kk)\big)_t$ with the restriction maps of the inclusions
$K_{\del}(\Delta^s_f)\subseteq K_{\del}(\Delta^t_f)$; isomorphic persistence modules have equal
barcodes. Over the field $\kk$ this reduced-cohomology module is the $\kk$-linear dual of the
$\RR$-indexed reduced-homology module $\mathbf H_f:=\big(\rH_{d}(K_{\del}(\Delta^t_f);\kk)\big)_t$
(universal coefficients, naturally in the inclusion, as in the proof of
Theorem~\ref{thm:persistent-hochster}); dualization over $\kk$ reverses arrows and preserves interval supports, and hence barcodes, so it suffices to bound
$d_b\bigl(\mathrm{dgm}(\mathbf H_f),\mathrm{dgm}(\mathbf H_g)\bigr)$.

The operation $\Delta'\mapsto K_{\del}(\Delta')$ preserves inclusions of complexes on $[n]$ (link of the
fixed face $\sigma$, deletion of the fixed vertex $i$, and intersection are each monotone). Applying it
to the chain $\Delta^t_f\subseteq\Delta^{t+\delta}_g\subseteq\Delta^{t+2\delta}_f$ of
Lemma~\ref{lem:interleave-filt},
\[
K_{\del}(\Delta^t_f)\subseteq K_{\del}(\Delta^{t+\delta}_g)\subseteq K_{\del}(\Delta^{t+2\delta}_f)
\qquad(t\in\RR),
\]
and symmetrically with $f,g$ swapped, so the filtrations $(K_{\del}(\Delta^t_f))_t$ and
$(K_{\del}(\Delta^t_g))_t$ are $\delta$-interleaved. Since $\rH_d(-;\kk)$ is a functor and interleavings
are preserved by functors \cite{BubenikScott}, $\mathbf H_f$ and $\mathbf H_g$ are $\delta$-interleaved.
Both are of finite type, hence $q$-tame: each $K_{\del}(\Delta^t)$ is a subcomplex of the finite complex
$\Delta$, and the filtration changes at finitely many values of $t$. By the isometry theorem
(Theorem~\ref{thm:cdsgo}),
\[
d_b\big(\mathcal B^q_{i,\del,\sigma}(f),\mathcal B^q_{i,\del,\sigma}(g)\big)
=
d_b\bigl(\mathrm{dgm}(\mathbf H_f),\mathrm{dgm}(\mathbf H_g)\bigr)
\le\delta.
\]
The same argument with $K_{\link}$ gives
$d_b(\mathcal B^q_{i,\link,\sigma}(f),\mathcal B^q_{i,\link,\sigma}(g))\le\delta$. Taking the maximum
over the finitely many faces $\sigma\in\del_\Delta(i)$, resp.\ $\link_\Delta(i)$
(Proposition~\ref{prop:reduction}), gives the bound for every member of $\mathcal B^q_i$ and, in
particular, $d_{\mathrm{coll}}(\mathcal B^q_i(f),\mathcal B^q_i(g))\le\|f-g\|_\infty$.
\end{proof}

\begin{remark}[Stability of the face-indexed barcode collection]\label{rem:stability-meaning}
Theorem~\ref{thm:stability} gives, for every $\tau\in\mathcal I_i$,
\[
d_b\big(\mathcal B^q_{i,\tau}(f),\mathcal B^q_{i,\tau}(g)\big)\ \le\ \|f-g\|_\infty .
\]
Taking the maximum over the finite common index set $\mathcal I_i$ yields
\[
d_{\mathrm{coll}}\big(\mathcal B_i^q(f),\mathcal B_i^q(g)\big)\ \le\ \|f-g\|_\infty .
\]
Hence, for fixed $\Delta$, $i$, and $q$, the assignment $f\mapsto\mathcal B_i^q(f)$ is $1$-Lipschitz
from the monotonic functions $\Delta\to\RR$, with the sup-norm, to $\mathcal I_i$-indexed barcode
collections with $d_{\mathrm{coll}}$. In particular the per-vertex barcode is a robust descriptor of
$(\Delta,f)$: if $f$ is known only up to sup-norm error $\varepsilon$, any admissible $g$ with
$\|f-g\|_\infty\le\varepsilon$ yields a collection within $d_{\mathrm{coll}}$-distance $\varepsilon$.
This is the same $1$-Lipschitz behavior as for the Tor-side invariants of \cite{SuwayyidWeiGraphs}.

\end{remark}

Alternatively, Theorem~\ref{thm:summand-iso} transports the same stability bound from the interleaved deletion and link filtrations, with the same constant $\delta$.

\section{Attachment persistence and the \texorpdfstring{$x_i$}{xi}-action}
\label{sec:attachment}

The preceding results describe the deletion and link summands of
Theorem~\ref{thm:structure} separately, identify their persistent ranks through
Theorem~\ref{thm:persistent-hochster}, and associate to them the face-indexed
barcodes of Definition~\ref{def:barcode}.  Proposition~\ref{prop:xi-action}
shows, however, that these two summands are not independent as parts of the
original $S$-module: multiplication by $x_i$ maps the $x_i$-degree-$0$
deletion summand to the $x_i$-degree-$1$ link summand.

Geometrically, this coupling reflects the standard decomposition
\[
\Delta^t
=
\del_{\Delta^t}(i)
\cup
\st_{\Delta^t}(i),
\qquad
\del_{\Delta^t}(i)
\cap
\st_{\Delta^t}(i)
=
\link_{\Delta^t}(i).
\]

Motivated by this description, we refer to the persistence of the map coupling
the deletion and link summands as \emph{attachment persistence}.  The
attachment map does not replace the two separate persistence modules; rather,
it records the additional information that describes how they are coupled within
the $S$-module structure.

\subsection{The attachment morphism}
\label{ssec:attachment-map}

Fix a vertex $i$, a cohomological degree $q$, and a face
$\sigma\in\link_\Delta(i)$, and put
\(
d=q-|\sigma|-1.
\)
For every $t$, set
\begin{equation}
\label{eq:attachment-complexes}
A_{i,\sigma}^t
:=
\link_{\Delta^t}(\sigma)\cap\del_{\Delta^t}(i),
\qquad
B_{i,\sigma}^t
:=
\link_{\Delta^t}(\sigma\cup\{i\}).
\end{equation}
Equivalently, $A_{i,\sigma}^t=\link_{\del_{\Delta^t}(i)}(\sigma)$ and
$B_{i,\sigma}^t=\link_{\link_{\Delta^t}(i)}(\sigma)$, with the same
void-complex convention used in Theorem~\ref{thm:persistent-hochster}.
For every $t$, one has $B_{i,\sigma}^t\subseteq A_{i,\sigma}^t$.

By Theorem~\ref{thm:persistent-hochster}, the values of the corresponding
deletion and link persistence modules may be identified as
\begin{equation}
\label{eq:attachment-DL}
D_{t,\sigma}^q
:=
\bigl(\mathbb D^q_{i,\sigma}\bigr)_t
\cong
\rH^d(A_{i,\sigma}^t;\kk),
\qquad
L_{t,\sigma}^q
:=
\bigl(\mathbb L^q_{i,\sigma}\bigr)_t
\cong
\rH^d(B_{i,\sigma}^t;\kk).
\end{equation}
The inclusion \(B_{i,\sigma}^t\subseteq A_{i,\sigma}^t\) therefore induces a restriction map
\begin{equation}
\label{eq:attachment-map}
\rho_{t,\sigma}^q
\colon
D_{t,\sigma}^q
\longrightarrow
L_{t,\sigma}^q.
\end{equation}

Under the identifications \eqref{eq:attachment-DL}, this restriction map is
precisely the combinatorial realization of the $x_i$-action described in
Proposition~\ref{prop:xi-action}. Indeed, the multiplication of $x_i$ from
multidegree $(0;-\mathbf 1_\sigma)$ to $(1;-\mathbf 1_\sigma)$ is induced by
the quotient
\[
\kk[\del_{\Delta^t}(i)]
\twoheadrightarrow
\kk[\link_{\Delta^t}(i)].
\]
Restricting to multidegree $-\mathbf 1_\sigma$ and applying the natural
Hochster identification used in Theorem~\ref{thm:persistent-hochster}, this
quotient corresponds to the cohomology restriction induced by
\(
\link_{\link_{\Delta^t}(i)}(\sigma)
\subseteq
\link_{\del_{\Delta^t}(i)}(\sigma),
\)
that is,
\(
B_{i,\sigma}^t\subseteq A_{i,\sigma}^t.
\)
Thus multiplication by $x_i$ on this squarefree multidegree is exactly
\[
\rho_{t,\sigma}^q
\colon
\rH^d(A_{i,\sigma}^t;\kk)
\longrightarrow
\rH^d(B_{i,\sigma}^t;\kk).
\]

The naturality established in Proposition~\ref{prop:xi-action} gives, for
every $s\leq t$,
\begin{equation}
\label{eq:rho-naturality}
\rho_{s,\sigma}^q\circ\eta^{t\to s}
=
\zeta^{t\to s}\circ\rho_{t,\sigma}^q.
\end{equation}
Consequently, the maps $\rho_{t,\sigma}^q$ assemble into a natural
transformation of $\RR^{\mathrm{op}}$-indexed persistence modules
\begin{equation}
\label{eq:rho-natural-transformation}
\boldsymbol\rho^q_{i,\sigma}
\colon
\mathbb D^q_{i,\sigma}
\longrightarrow
\mathbb L^q_{i,\sigma}.
\end{equation}

The natural transformation $\boldsymbol\rho^q_{i,\sigma}$ retains the
deletion and link persistence modules together with the additional
$x_i$-coupling between them.  Forgetting the arrow recovers
$\mathbb D^q_{i,\sigma}$ and $\mathbb L^q_{i,\sigma}$, whereas the two
modules, or their barcodes considered separately, need not determine
$\boldsymbol\rho^q_{i,\sigma}$.

\subsection{Relation with relative cohomology}
\label{ssec:attachment-relative}

The morphism
\[
\boldsymbol\rho^q_{i,\sigma}\colon
\mathbb D^q_{i,\sigma}\longrightarrow
\mathbb L^q_{i,\sigma}
\]
is the primary attachment datum.  Since
$B_{i,\sigma}^t\subseteq A_{i,\sigma}^t$ at every filtration level, it is
also the middle morphism in the usual reduced-cohomology long exact sequence
of the pair.  Writing
\[
\mathbb R^q_{i,\sigma}
:=
\Bigl(
\widetilde H^{\,q-|\sigma|-1}
(A_{i,\sigma}^t,B_{i,\sigma}^t;\kk)
\Bigr)_{t\in\RR},
\]
naturality of relative cohomology gives a long exact sequence of
$\RR^{\mathrm{op}}$-indexed persistence modules
\[
\cdots\longrightarrow
\mathbb R^q_{i,\sigma}
\longrightarrow
\mathbb D^q_{i,\sigma}
\xrightarrow{\boldsymbol\rho^q_{i,\sigma}}
\mathbb L^q_{i,\sigma}
\longrightarrow
\mathbb R^{q+1}_{i,\sigma}
\longrightarrow\cdots .
\]

Thus the kernel and cokernel of $\boldsymbol\rho^q_{i,\sigma}$ are the
corresponding images in this standard long exact sequence.  We will use only
the image persistence module
\[
\mathbb I^q_{i,\sigma}
:=
\operatorname{im}\boldsymbol\rho^q_{i,\sigma},
\]
whose value at $t$ is
\[
\bigl(\mathbb I^q_{i,\sigma}\bigr)_t
=
\rho_{t,\sigma}^q(D_{t,\sigma}^q)
\subseteq L_{t,\sigma}^q.
\]
It records the part of the deletion persistence carried by multiplication
with $x_i$ into the link persistence.

\subsection{The coupled attachment rank}
\label{ssec:attachment-rank}

The persistent numbers introduced earlier measure the deletion and link
summands separately across two filtration levels.  The attachment map gives a
corresponding two-time quantity that couples the two summands.

\begin{definition}
\label{def:attachment-rank}
For $s\leq t$, define
\begin{equation}
\label{eq:attachment-rank-definition}
\chi^{q,s\to t}_{i,\sigma}
:=
\rank\Bigl(
D_{t,\sigma}^q
\xrightarrow{\rho_{t,\sigma}^q}
L_{t,\sigma}^q
\xrightarrow{\zeta^{t\to s}}
L_{s,\sigma}^q
\Bigr).
\end{equation}
By \eqref{eq:rho-naturality}, equivalently,
\[
\chi^{q,s\to t}_{i,\sigma}
=
\rank\Bigl(
D_{t,\sigma}^q
\xrightarrow{\eta^{t\to s}}
D_{s,\sigma}^q
\xrightarrow{\rho_{s,\sigma}^q}
L_{s,\sigma}^q
\Bigr).
\]
\end{definition}

\begin{corollary}
\label{thm:attachment-rank}
For $s\leq t$ and $d=q-|\sigma|-1$,
\begin{equation}
\label{eq:attachment-rank-cohomology}
\chi^{q,s\to t}_{i,\sigma}
=
\rank\Bigl(
\widetilde H^d(A_{i,\sigma}^t;\kk)
\longrightarrow
\widetilde H^d(B_{i,\sigma}^s;\kk)
\Bigr),
\end{equation}
where the map is reduced-cohomology restriction induced by
$B_{i,\sigma}^s\subseteq A_{i,\sigma}^t$.
Moreover,
\[
\chi^{q,s\to t}_{i,\sigma}
=
\rank\Bigl(
(\mathbb I^q_{i,\sigma})_t
\longrightarrow
(\mathbb I^q_{i,\sigma})_s
\Bigr).
\]
Hence $\chi^{q,s\to t}_{i,\sigma}$ is the rank invariant of the persistent
image of multiplication by $x_i$.
\end{corollary}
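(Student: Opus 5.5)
The plan has two parts, one for each displayed identity; the second leans on the first only through the definition of $\mathbb I^q_{i,\sigma}$.

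\emph{First identity.} I will use the composite form in Definition~\ref{def:attachment-rank}, namely $\zeta^{t\to s}\circ\rho^q_{t,\sigma}$, and transport each factor through the identifications \eqref{eq:attachment-DL}.
\begin{itemize}
\item By Step~1 of the proof of Theorem~\ref{thm:persistent-hochster}, $\zeta^{t\to s}$ becomes the restriction $\widetilde H^d(B^t_{i,\sigma};\kk)\to\widetilde H^d(B^s_{i,\sigma};\kk)$ induced by $B^s_{i,\sigma}\subseteq B^t_{i,\sigma}$.
\item By \S\ref{ssec:attachment-map}, $\rho^q_{t,\sigma}$ becomes the restriction induced by $B^t_{i,\sigma}\subseteq A^t_{i,\sigma}$.
\item Reduced cohomology is a contravariant functor on inclusions, so the composite is the restriction along $B^s_{i,\sigma}\subseteq B^t_{i,\sigma}\subseteq A^t_{i,\sigma}$.
\end{itemize}
The identifications are isomorphisms, so ranks are preserved, which gives \eqref{eq:attachment-rank-cohomology}.

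The main obstacle is that the identifications must be simultaneously compatible with $\zeta$ and with $\rho$, that is, the same Hochster isomorphisms $\Psi$ are used on both sides. For $\zeta$ this is exactly the naturality square of Step~1. For $\rho$ it is the same square applied to $\Gamma=\del_{\Delta^t}(i)\supseteq\Gamma'=\link_{\Delta^t}(i)$, both complexes on $[n]\setminus\{i\}$ over $S'$.

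There is one subtlety. The $x_i$-action sends the multidegree-$(0;-\mathbf 1_\sigma)$ piece to the $(1;-\mathbf 1_\sigma)$ piece, and Lemma~\ref{lem:chain-split} identifies the latter with $x_i$ times the link \v{C}ech complex with no extra sign. So no sign discrepancy arises, and the square commutes on the nose, not merely up to sign. Even if a sign did appear, it would not affect ranks.

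When $\sigma\notin\Delta^s$ or $\sigma\notin\link_{\Delta^t}(i)$, the relevant groups vanish by the void-complex convention, and both sides are $0$.

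\emph{Second identity.} Let $x\in(\mathbb I^q_{i,\sigma})_t=\operatorname{im}\rho_t$. The structure map of $\mathbb I$ is the restriction of $\zeta^{t\to s}$ to this image; it lands in $\operatorname{im}\rho_s$ by \eqref{eq:rho-naturality}, since $\zeta\rho_t(y)=\rho_s\eta(y)$. Hence its image is $\zeta^{t\to s}(\rho_t(D_t))$, whose dimension is the rank of $\zeta^{t\to s}\circ\rho_t$, i.e.\ $\chi^{q,s\to t}_{i,\sigma}$ by Definition~\ref{def:attachment-rank}.

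The final sentence of the statement is then a restatement: $\chi$ is the rank function of the persistence module $\mathbb I^q_{i,\sigma}$.
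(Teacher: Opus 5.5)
Your proposal is correct and follows the paper's proof. The composite $\zeta^{t\to s}\circ\rho^q_{t,\sigma}$ is restriction along $B^s_{i,\sigma}\subseteq B^t_{i,\sigma}\subseteq A^t_{i,\sigma}$ by functoriality, and the second identity follows from $\mathbb I^q_{i,\sigma}=\operatorname{im}\boldsymbol\rho^q_{i,\sigma}$ together with \eqref{eq:rho-naturality}; your extra checks (the same Hochster isomorphisms realize both $\zeta$ and $\rho$, and the void-complex cases) only spell out what the paper leaves implicit.
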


\begin{proof}
Since
\[
B_{i,\sigma}^s
\subseteq
B_{i,\sigma}^t
\subseteq
A_{i,\sigma}^t,
\]
the composite defining $\chi^{q,s\to t}_{i,\sigma}$ is restriction along
$B_{i,\sigma}^s\subseteq A_{i,\sigma}^t$.  The second assertion follows from
$\mathbb I^q_{i,\sigma}=\operatorname{im}\boldsymbol\rho^q_{i,\sigma}$.
\end{proof}

The attachment morphism can distinguish two situations in which the
corresponding deletion and link persistence modules have identical barcodes.
Thus the arrow-valued invariant contains information that is absent from the
two barcodes considered separately.

\begin{example}
\label{ex:attachment-strict}
Let $A_1=B_1=\partial(abc)$, the boundary of the triangle $abc$, and let
$B_2=\partial(abc)$ and
$A_2=\langle abc\rangle\cup\partial(ade)$, where $\langle abc\rangle$
is the filled $2$-simplex on $\{a,b,c\}$ and $\partial(ade)$ is the
boundary of a second triangle attached to the first at $a$. Then
$A_1\simeq A_2\simeq S^1$ and $B_1\cong B_2\cong S^1$.

Give every face filtration value $0$. Add a new vertex $i$ and set
$\Delta_r:=A_r\cup(i*B_r)$ for $r=1,2$. Then
$\del_{\Delta_r}(i)=A_r$ and $\link_{\Delta_r}(i)=B_r$. For
$\sigma=\varnothing$ and $q=2$, so that $d=1$, the corresponding deletion
and link persistence modules therefore have the same barcodes for $r=1$ and
$r=2$.

The attachment maps are different. For $r=1$, the map
$H^1(A_1;\kk)\to H^1(B_1;\kk)$ is an isomorphism. For $r=2$, the unique
nonzero class in $H^1(A_2;\kk)$ is represented by the cycle $ade$, while
$abc$ bounds the filled triangle $\langle abc\rangle$. Hence
$H^1(A_2;\kk)\to H^1(B_2;\kk)$ is the zero map.  Let
$\rho^{(r),2}_{i,\varnothing}$ denote the attachment morphism associated
with $\Delta_r$, for $r=1,2$.  Then
\(
\rank\rho^{(1),2}_{i,\varnothing}=1,
\) and \(
\rank\rho^{(2),2}_{i,\varnothing}=0.
\)

Equivalently, multiplication by $x_i$ from the deletion slice
$(0;\mathbf 0)$ to the link slice $(1;\mathbf 0)$ has different ranks in the
two examples, although the corresponding deletion and link barcodes agree.
Thus the arrow
\[
\boldsymbol\rho^q_{i,\sigma}\colon
\mathbb D^q_{i,\sigma}\longrightarrow
\mathbb L^q_{i,\sigma}
\]
contains information that is not determined by its source and target
persistence modules, and hence not by their barcodes.
\end{example}

\subsection{Stability of the attachment morphism}
\label{ssec:attachment-stability}

The attachment morphism inherits the same stability constant as the deletion
and link persistence modules.

\begin{corollary}[Stability of the $x_i$-action]
\label{thm:attachment-stability}
Let $f,g\colon\Delta\to\RR$ be monotonic and put
$\delta=\|f-g\|_\infty$.  For every $i,q,\sigma$, the arrows
\[
\boldsymbol\rho^q_{i,\sigma}(f)\colon
\mathbb D^q_{i,\sigma}(f)\longrightarrow
\mathbb L^q_{i,\sigma}(f)
\]
and
\[
\boldsymbol\rho^q_{i,\sigma}(g)\colon
\mathbb D^q_{i,\sigma}(g)\longrightarrow
\mathbb L^q_{i,\sigma}(g)
\]
are $\delta$-interleaved in the arrow category of
$\RR^{\mathrm{op}}$-indexed persistence modules.  Consequently, their image
modules are $\delta$-interleaved, and the corresponding image barcodes satisfy
\[
d_b\Bigl(
\mathcal B(\mathbb I^q_{i,\sigma}(f)),
\mathcal B(\mathbb I^q_{i,\sigma}(g))
\Bigr)
\leq
\|f-g\|_\infty.
\]
\end{corollary}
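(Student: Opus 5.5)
The plan is to build the interleaving at the level of pairs of simplicial complexes and then push it through reduced cohomology, so that the deletion module, the link module and the arrow between them are interleaved all at once. The maps of an interleaving must be natural in the arrow $B\subseteq A$, and working with pairs forces this.

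\emph{Step 1: interleaved pairs.} For a complex $\Delta'$ on $[n]$, write $A(\Delta')=\link_{\Delta'}(\sigma)\cap\del_{\Delta'}(i)$ and $B(\Delta')=\link_{\Delta'}(\sigma\cup\{i\})$, so that $B(\Delta')\subseteq A(\Delta')$. Both operations preserve inclusions of complexes on $[n]$, as in the proof of Theorem~\ref{thm:stability}. Lemma~\ref{lem:interleave-filt} gives
\[
\Delta^t_f\subseteq\Delta^{t+\delta}_g\subseteq\Delta^{t+2\delta}_f
\]
and the same chain with $f,g$ swapped. From this we get commutative diagrams of inclusions of pairs,
\[
(A(\Delta^t_f),B(\Delta^t_f))\subseteq(A(\Delta^{t+\delta}_g),B(\Delta^{t+\delta}_g))\subseteq(A(\Delta^{t+2\delta}_f),B(\Delta^{t+2\delta}_f)).
\]
Every square involved is a square of inclusions, so it commutes automatically.

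\emph{Step 2: apply cohomology.} Apply the functor sending an inclusion $B\subseteq A$ to the restriction arrow $\rH^d(A;\kk)\to\rH^d(B;\kk)$, with $d=q-|\sigma|-1$. This is a functor from pairs of complexes with inclusions to the arrow category of $\kk$-vector spaces. Restriction maps go contravariantly, which matches the $\RR^{\mathrm{op}}$ indexing. Interleavings are preserved by functors \cite{BubenikScott}, so the diagrams of Step 1 yield a $\delta$-interleaving of the arrows
\[
\rH^d(A(\Delta^\bullet_f))\to\rH^d(B(\Delta^\bullet_f))
\quad\text{and}\quad
\rH^d(A(\Delta^\bullet_g))\to\rH^d(B(\Delta^\bullet_g))
\]
in the arrow category of $\RR^{\mathrm{op}}$-indexed modules. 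The interleaving maps in the $A$-component and the $B$-component commute with the horizontal arrows, because they come from inclusions of pairs.

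It remains to transport this to $\boldsymbol\rho^q_{i,\sigma}(f)$ and $\boldsymbol\rho^q_{i,\sigma}(g)$. The identifications \eqref{eq:attachment-DL} are natural in the complex by Step~1 of the proof of Theorem~\ref{thm:persistent-hochster}. By the discussion in \S\ref{ssec:attachment-map}, they intertwine $\rho$ with restriction along $B\subseteq A$. Hence $\boldsymbol\rho(f)$ and $\boldsymbol\rho(g)$ are isomorphic, as objects of the arrow category, to the cohomological arrows above, and the interleaving transfers.

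\emph{Step 3: images.} Taking the image is a functor from the arrow category to persistence modules, and functors preserve interleavings, so $\mathbb I^q_{i,\sigma}(f)$ and $\mathbb I^q_{i,\sigma}(g)$ are $\delta$-interleaved. Concretely, an interleaving map $\varphi=(\varphi_A,\varphi_B)$ sends $\operatorname{im}\rho$ into $\operatorname{im}\rho$ because $\varphi_B\rho=\rho\varphi_A$. The interleaving identities are inherited by restriction.

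To pass to barcodes we need the isometry theorem, Theorem~\ref{thm:cdsgo}, which is stated for $\RR$-indexed modules, so we apply it after reindexing $t\mapsto -t$. Reindexing preserves interleavings and bottleneck distance. The image modules are submodules of the pfd, finite-type modules $\mathbb L^q_{i,\sigma}$ of Theorem~\ref{thm:finite-type}, with critical values in $\operatorname{Crit}(f)$, so they are $q$-tame. Theorem~\ref{thm:cdsgo} then gives the bottleneck bound.

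The main point requiring care is Step 2's transport: checking that the Hochster-type identifications are natural simultaneously for inclusions along the filtration, across $f$ and $g$, and for $B\subseteq A$. This holds because the identifications depend only on face sets. The $\delta$-shifted maps pass between different filtrations, and these are again just inclusions of subcomplexes of $\Delta$ on $[n]$, so naturality applies.
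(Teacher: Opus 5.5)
Your proposal is correct and follows essentially the same route as the paper. Both arguments apply $A$ and $B$ to the interleaving chain of Lemma~\ref{lem:interleave-filt}, use the commuting inclusions $B\subseteq A$ and reduced cohomology to interleave the attachment arrows, pass to images, and invoke Theorem~\ref{thm:cdsgo}. Your extra checks fill in details the paper leaves implicit: transport along the natural Hochster identification, reindexing $\RR^{\mathrm{op}}$ to $\RR$, and $q$-tameness of the image modules.
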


\begin{proof}
The interleaving inclusions of Lemma~\ref{lem:interleave-filt} remain
inclusions after applying
\[
\Delta'\longmapsto
A_{i,\sigma}(\Delta')
\qquad\text{and}\qquad
\Delta'\longmapsto
B_{i,\sigma}(\Delta'),
\]
and commute with $B_{i,\sigma}(\Delta')\subseteq A_{i,\sigma}(\Delta')$.
Reduced cohomology therefore gives a $\delta$-interleaving of the two
attachment arrows.  Passing to their images preserves the interleaving, and
the bottleneck estimate follows from Theorem~\ref{thm:cdsgo}.
\end{proof}

\section{Example}\label{sec:example}

We compute the face-$\varnothing$ per-vertex barcodes of a filtered disk on four vertices whose
filtration function admits no nontrivial symmetry. The four vertices then receive pairwise distinct
barcodes, and the top of the cone is the only vertex carrying an essential bar in cohomological
degree $2$. Both features are invisible to the persistent homology of $\Delta^\bullet$ itself, which
is a single barcode attached to no vertex.

\begin{figure}[h]
\centering
\begin{tikzpicture}[scale=2.2,
    vtx/.style={circle,fill=black,inner sep=1.6pt},
    vlbl/.style={font=\small}]
\coordinate (v1) at (-1.40,0.36);
\coordinate (v2) at (1.40,0.20);
\coordinate (v3) at (0.12,-0.95);
\coordinate (v0) at (-0.13,-0.05);
\fill[black!14] (v0)--(v1)--(v2)--cycle;
\fill[black!8]  (v0)--(v1)--(v3)--cycle;
\fill[black!3]  (v0)--(v2)--(v3)--cycle;
\draw[thick] (v1)--(v2)--(v3)--(v1);
\draw (v0)--(v1); \draw (v0)--(v2); \draw (v0)--(v3);
\foreach \p in {v0,v1,v2,v3} \node[vtx] at (\p){};
\node[vlbl,above left=-2.5pt] at (v0) {$0$};
\node[vlbl,left] at (v1) {$1$};
\node[vlbl,right] at (v2) {$2$};
\node[vlbl,below] at (v3) {$3$};
\end{tikzpicture}
\caption{The complex $\Delta$ of Example~\ref{ex:cone}, drawn in perspective as the cone it is, seen
from above with the apex $0$ behind the rim cycle $1$--$2$--$3$ (heavy). The three shaded triangles
$012$, $023$, $013$ are faces; the rim triangle $\{1,2,3\}$ and the tetrahedron $\{0,1,2,3\}$ are
not, so $\Delta$ is a triangulated disk with boundary cycle $1$--$2$--$3$. The drawing records only
which subsets of $\{0,1,2,3\}$ are faces; no metric is implied, and it carries no filtration data.
The filtration function $f$ is given in the table below.}
\label{fig:cone}
\end{figure}

\begin{example}[A cone point, detected per vertex]\label{ex:cone}
We index the vertices by $\{0,1,2,3\}$, a relabeling of $[4]$. Let $\Delta$ be the cone with top $0$
over the boundary of the triangle on $\{1,2,3\}$: its faces are
the four vertices, the rim edges $12,23,31$, the top edges $01,02,03$, and the side triangles
$012,023,013$. The rim triangle $\{1,2,3\}$ and the tetrahedron are not faces, so $\Delta$ is a
triangulated disk with boundary cycle $1$--$2$--$3$. Filter it by the monotonic function $f$ with
$f(\{v\})=0$ for every vertex $v$ and
\[
\renewcommand{\arraystretch}{1.15}
\begin{array}{c|cccccc|ccc}
\tau & 23 & 01 & 02 & 13 & 12 & 03 & 012 & 023 & 013\\\hline
f(\tau) & 1 & 2 & 3 & 4 & 5 & 6 & 7 & 8 & 9
\end{array}
\]
(Figure~\ref{fig:cone}). Monotonicity holds because each side triangle is assigned a value larger
than those of its three edges. The filtration is not flag: at $t=5$ all three rim edges are present,
but $\{1,2,3\}$ is never filled, which is what keeps $\Delta$ a disk. Every automorphism of $\Delta$
fixes $0$, because $0$ is the only vertex whose link is a cycle, and permutes $\{1,2,3\}$; since $f$
takes distinct values on the three rim edges, the identity is the only automorphism of $\Delta$
preserving $f$. The asymmetry is therefore a property of the pair $(\Delta,f)$.

We record the face $\sigma=\varnothing$; the remaining faces are handled similarly. For
$\sigma=\varnothing$, Corollary~\ref{cor:static-comb} gives
$\link_{\link_{\Delta^t}(i)}(\varnothing)=\link_{\Delta^t}(i)$ and
$\link_{\del_{\Delta^t}(i)}(\varnothing)=\del_{\Delta^t}(i)$, so by the homological form of
Theorem~\ref{thm:persistent-hochster} the barcodes $\mathcal B^q_{i,\link,\varnothing}$ and
$\mathcal B^q_{i,\del,\varnothing}$ are those of ordinary persistent reduced homology of the
filtrations $(\link_{\Delta^t}(i))_t$, resp.\ $(\del_{\Delta^t}(i))_t$, in degree $q-1$.

Both filtrations are read off $f$ directly: a face $\tau$ with $i\notin\tau$ lies in
$\link_{\Delta^t}(i)$ exactly when $f(\tau\cup\{i\})\le t$, and in $\del_{\Delta^t}(i)$ exactly when
$\tau\in\Delta$ and $f(\tau)\le t$. In the table below, each entry $\tau{:}t$ names a face $\tau$,
set in bold, together with the value $t$ at which it enters; the vertices of the deletions, all of
which enter at $t=0$, are omitted.
\[
\renewcommand{\arraystretch}{1.3}
\begin{array}{c|ll|ll}
 & \multicolumn{2}{c|}{\link_{\Delta^t}(i)} & \multicolumn{2}{c}{\del_{\Delta^t}(i)}\\
i & \text{vertices} & \text{edges} & \text{edges} & \text{triangle}\\\hline
0 & \mathbf 1{:}2,\ \mathbf 2{:}3,\ \mathbf 3{:}6 & \mathbf{12}{:}7,\ \mathbf{23}{:}8,\ \mathbf{13}{:}9
  & \mathbf{23}{:}1,\ \mathbf{13}{:}4,\ \mathbf{12}{:}5 & \text{none}\\
1 & \mathbf 0{:}2,\ \mathbf 3{:}4,\ \mathbf 2{:}5 & \mathbf{02}{:}7,\ \mathbf{03}{:}9
  & \mathbf{23}{:}1,\ \mathbf{02}{:}3,\ \mathbf{03}{:}6 & \mathbf{023}{:}8\\
2 & \mathbf 3{:}1,\ \mathbf 0{:}3,\ \mathbf 1{:}5 & \mathbf{01}{:}7,\ \mathbf{03}{:}8
  & \mathbf{01}{:}2,\ \mathbf{13}{:}4,\ \mathbf{03}{:}6 & \mathbf{013}{:}9\\
3 & \mathbf 2{:}1,\ \mathbf 1{:}4,\ \mathbf 0{:}6 & \mathbf{02}{:}8,\ \mathbf{01}{:}9
  & \mathbf{01}{:}2,\ \mathbf{02}{:}3,\ \mathbf{12}{:}5 & \mathbf{012}{:}7
\end{array}
\]
Each link of a rim vertex is missing one edge: $\link_{\Delta^t}(1)$ never contains $23$, since
$\{1,2,3\}\notin\Delta$, and similarly for $2$ and $3$. Each link filtration is a filtration of a
complex on three vertices with at most two edges, and each deletion filtration is a filtration of a
graph on three vertices, possibly completed by one triangle.

The resulting barcodes are
\[
\renewcommand{\arraystretch}{1.25}
\begin{array}{c|ccc|ccc}
 & \multicolumn{3}{c|}{\text{link: }\mathcal B^q_{i,\link,\varnothing}}
 & \multicolumn{3}{c}{\text{deletion: }\mathcal B^q_{i,\del,\varnothing}}\\
i & q=0 & q=1 & q=2 & q=0 & q=1 & q=2\\\hline
0 & [0,2) & [3,7),\,[6,8) & [9,\infty) & \varnothing & [0,1),\,[0,4) & [5,\infty)\\
1 & [0,2) & [4,9),\,[5,7) & \varnothing & \varnothing & [0,1),\,[0,3) & [6,8)\\
2 & [0,1) & [3,8),\,[5,7) & \varnothing & \varnothing & [0,2),\,[0,4) & [6,9)\\
3 & [0,1) & [4,9),\,[6,8) & \varnothing & \varnothing & [0,2),\,[0,3) & [5,7)
\end{array}
\]
(Figure~\ref{fig:cone-barcode}); all degrees $q\ge3$ are empty as well, and the empty
deletion component in degree $q=0$ is the zero-length bar of Proposition~\ref{cor:minus-one-bars}
with $\sigma=\varnothing$, whose endpoints are both $0$. Three of the four rows of this table
are supplied by the general results of \S\ref{ssec:finite-type}. The degree-$0$ link bars are the case
$d=-1$ of Proposition~\ref{cor:minus-one-bars} with $\sigma=\varnothing$ and $\tau=\{i\}$: the bar is
\[
\Bigl[\,0,\ \min_{v\neq i,\ \{i,v\}\in\Delta}f(\{i,v\})\Bigr),
\]
so it dies when $i$ acquires its first edge, at $t=2$ for $i=0,1$ and at $t=1$ for $i=2,3$. The degree-$1$ deletion bars are the case $d=0$ of
Proposition~\ref{cor:msf-bars}, whose hypothesis holds because every vertex of a deletion enters at
$t=0$: for $\del_{\Delta^t}(0)$ the minimum spanning forest of the weighted graph
$23{:}1,\ 13{:}4,\ 12{:}5$ consists of $23$ and $13$, giving $[0,1)$ and $[0,4)$, and the graph is
connected, so there is no essential bar; the other three deletions are computed the same way. The
degree-$1$ link bars follow from the elder rule applied to the same tables, the vertices of a link now
entering at distinct times; Proposition~\ref{cor:msf-bars} does not apply to them for that reason. The
degree-$2$ bars record $\rH^1$ and are read off directly: $\rH^1(\del_{\Delta^t}(0);\kk)=\kk$ for
$t\ge5$, when the rim cycle closes and is never filled, and $\rH^1(\link_{\Delta^t}(0);\kk)=\kk$ for
$t\ge9$, when the link of the top becomes the rim cycle; for $i=1,2,3$ the cycle in
$\del_{\Delta^t}(i)$ is eventually filled by the triangle $\{0\}\cup(\{1,2,3\}\setminus\{i\})$ and every
link is a path, so no degree-$2$ bar is essential.

The four columns are pairwise distinct. The degree-$0$ link bar separates $\{0,1\}$ from
$\{2,3\}$; within $\{0,1\}$ the top is distinguished by its essential bars, and within $\{2,3\}$ the
degree-$2$ deletion bars $[6,9)$ and $[5,7)$ differ.

\emph{In the $\sigma=\varnothing$ component, the top is the unique vertex with an essential
degree-$2$ bar.} By Theorem~\ref{thm:structure} its essential bars record a nonzero module,
$\Hloc{2}{\pp_0}(\kk[\Delta^9])\neq0$: in the $b=0$ slice, the $x_0$-degree-$0$ piece is
$\rH^1(\del_{\Delta^9}(0);\kk)=\kk$ and each positive-$x_0$-degree piece is
$\rH^1(\link_{\Delta^9}(0);\kk)=\kk$. This is a statement about the $\sigma=\varnothing$ slice only.
In nonempty-face slices rim vertices also carry essential degree-$2$ bars: for $i=1$ and
$\sigma=\{0,2\}$, a facet of $\link_{\Delta^9}(1)$, Corollary~\ref{cor:static-comb} gives
$\Hloc{2}{\pp_1}(\kk[\Delta^9])_{(c;-\mathbf 1_{\{0,2\}})}\cong\rH^{-1}(\{\varnothing\};\kk)=\kk$ for
every $c\ge1$, and $\{0,2\}$ enters $\link_{\Delta^t}(1)$ at $t=f(\{0,1,2\})=7$, so
Proposition~\ref{cor:minus-one-bars} gives the essential bar $[7,\infty)$ in
$\mathcal B^2_{1,\link,\{0,2\}}$; in particular $\Hloc{2}{\pp_1}(\kk[\Delta])\neq0$. What singles out
the top is confined to the $\sigma=\varnothing$ slice, where its link is a $1$-cycle while every rim
link is a path.

Finally, the attachment morphism of \S\ref{ssec:attachment-map} is nontrivial here in the following
sense. Take $i=0$, $\sigma=\varnothing$ and $q=2$, so $d=1$, $A^t_{0,\varnothing}=\del_{\Delta^t}(0)$
and $B^t_{0,\varnothing}=\link_{\Delta^t}(0)$. For $5\le t\le8$ one has
$\rH^1(A^t_{0,\varnothing};\kk)=\kk$ and $\rH^1(B^t_{0,\varnothing};\kk)=0$, so
$\rho^2_{t,\varnothing}=0$, whereas at $t=9$ the two complexes coincide with the rim cycle and
$\rho^2_{9,\varnothing}$ is an isomorphism. Hence
$\chi^{2,s\to t}_{0,\varnothing}=1$ if $s=t=9$ and $0$ otherwise, by
Corollary~\ref{thm:attachment-rank}: multiplication by $x_0$ carries the deletion class into the link
only after the top edges and side triangles have all appeared.
\end{example}

\begin{figure}[h]
\centering
\begin{tikzpicture}[x=0.28cm,y=1cm,font=\scriptsize,
 fin/.style={line width=1.0pt,line cap=butt},
 ess/.style={line width=1.0pt,line cap=butt,-{Latex[length=1.5mm]}}]
\node[anchor=east] at (-6.25,2.79) {link};
\node[anchor=east] at (-6.25,1.24) {del};
\begin{scope}[xshift=0.0000cm]
\node[anchor=south] at (4.5,3.42) {$i=0$\ (top)};
\node[anchor=east,inner sep=1.5pt] at (0,3.30) {$0$};
\draw[fin] (0,3.30)--(2,3.30);
\node[anchor=east,inner sep=1.5pt] at (0,2.96) {$1$};
\draw[fin] (3,2.96)--(7,2.96);
\node[anchor=east,inner sep=1.5pt] at (0,2.62) {$1$};
\draw[fin] (6,2.62)--(8,2.62);
\node[anchor=east,inner sep=1.5pt] at (0,2.28) {$2$};
\draw[ess] (9,2.28)--(10.7,2.28);
\draw[gray!35] (0,1.88)--(9.4,1.88);
\node[anchor=east,inner sep=1.5pt] at (0,1.58) {$1$};
\draw[fin] (0,1.58)--(1,1.58);
\node[anchor=east,inner sep=1.5pt] at (0,1.24) {$1$};
\draw[fin] (0,1.24)--(4,1.24);
\node[anchor=east,inner sep=1.5pt] at (0,0.90) {$2$};
\draw[ess] (5,0.90)--(10.7,0.90);
\draw[gray!55] (0,0.54)--(9.4,0.54);
\foreach \t in {0,...,9}{\draw[gray!55](\t,0.49)--(\t,0.59);}
\foreach \t in {0,3,6,9}{\node[gray!55,below,inner sep=1.2pt] at (\t,0.50){$\t$};}
\node[gray!55,right,inner sep=1.5pt] at (9.5,0.54){$t$};
\end{scope}
\begin{scope}[xshift=3.3500cm]
\node[anchor=south] at (4.5,3.42) {$i=1$};
\node[anchor=east,inner sep=1.5pt] at (0,3.30) {$0$};
\draw[fin] (0,3.30)--(2,3.30);
\node[anchor=east,inner sep=1.5pt] at (0,2.96) {$1$};
\draw[fin] (4,2.96)--(9,2.96);
\node[anchor=east,inner sep=1.5pt] at (0,2.62) {$1$};
\draw[fin] (5,2.62)--(7,2.62);
\draw[gray!35] (0,1.88)--(9.4,1.88);
\node[anchor=east,inner sep=1.5pt] at (0,1.58) {$1$};
\draw[fin] (0,1.58)--(1,1.58);
\node[anchor=east,inner sep=1.5pt] at (0,1.24) {$1$};
\draw[fin] (0,1.24)--(3,1.24);
\node[anchor=east,inner sep=1.5pt] at (0,0.90) {$2$};
\draw[fin] (6,0.90)--(8,0.90);
\draw[gray!55] (0,0.54)--(9.4,0.54);
\foreach \t in {0,...,9}{\draw[gray!55](\t,0.49)--(\t,0.59);}
\foreach \t in {0,3,6,9}{\node[gray!55,below,inner sep=1.2pt] at (\t,0.50){$\t$};}
\node[gray!55,right,inner sep=1.5pt] at (9.5,0.54){$t$};
\end{scope}
\begin{scope}[xshift=6.7000cm]
\node[anchor=south] at (4.5,3.42) {$i=2$};
\node[anchor=east,inner sep=1.5pt] at (0,3.30) {$0$};
\draw[fin] (0,3.30)--(1,3.30);
\node[anchor=east,inner sep=1.5pt] at (0,2.96) {$1$};
\draw[fin] (3,2.96)--(8,2.96);
\node[anchor=east,inner sep=1.5pt] at (0,2.62) {$1$};
\draw[fin] (5,2.62)--(7,2.62);
\draw[gray!35] (0,1.88)--(9.4,1.88);
\node[anchor=east,inner sep=1.5pt] at (0,1.58) {$1$};
\draw[fin] (0,1.58)--(2,1.58);
\node[anchor=east,inner sep=1.5pt] at (0,1.24) {$1$};
\draw[fin] (0,1.24)--(4,1.24);
\node[anchor=east,inner sep=1.5pt] at (0,0.90) {$2$};
\draw[fin] (6,0.90)--(9,0.90);
\draw[gray!55] (0,0.54)--(9.4,0.54);
\foreach \t in {0,...,9}{\draw[gray!55](\t,0.49)--(\t,0.59);}
\foreach \t in {0,3,6,9}{\node[gray!55,below,inner sep=1.2pt] at (\t,0.50){$\t$};}
\node[gray!55,right,inner sep=1.5pt] at (9.5,0.54){$t$};
\end{scope}
\begin{scope}[xshift=10.0500cm]
\node[anchor=south] at (4.5,3.42) {$i=3$};
\node[anchor=east,inner sep=1.5pt] at (0,3.30) {$0$};
\draw[fin] (0,3.30)--(1,3.30);
\node[anchor=east,inner sep=1.5pt] at (0,2.96) {$1$};
\draw[fin] (4,2.96)--(9,2.96);
\node[anchor=east,inner sep=1.5pt] at (0,2.62) {$1$};
\draw[fin] (6,2.62)--(8,2.62);
\draw[gray!35] (0,1.88)--(9.4,1.88);
\node[anchor=east,inner sep=1.5pt] at (0,1.58) {$1$};
\draw[fin] (0,1.58)--(2,1.58);
\node[anchor=east,inner sep=1.5pt] at (0,1.24) {$1$};
\draw[fin] (0,1.24)--(3,1.24);
\node[anchor=east,inner sep=1.5pt] at (0,0.90) {$2$};
\draw[fin] (5,0.90)--(7,0.90);
\draw[gray!55] (0,0.54)--(9.4,0.54);
\foreach \t in {0,...,9}{\draw[gray!55](\t,0.49)--(\t,0.59);}
\foreach \t in {0,3,6,9}{\node[gray!55,below,inner sep=1.2pt] at (\t,0.50){$\t$};}
\node[gray!55,right,inner sep=1.5pt] at (9.5,0.54){$t$};
\end{scope}
\end{tikzpicture}
\caption{The face-$\varnothing$ per-vertex barcodes of Example~\ref{ex:cone}: link summand (upper
block, labelled \emph{link}) and deletion summand (lower block, labelled \emph{del}) for each of the four vertices. Left labels in each block are
the cohomological degree $q$; arrows mark essential bars. The four columns are pairwise distinct, and
only the top $0$ has essential bars.}
\label{fig:cone-barcode}
\end{figure}

\section{Coordinate primes and Boolean multiplication diagrams}
\label{sec:coordinate-primes}

The preceding sections focused on a vertex prime
$\pp_i=(x_j:j\neq i)$, for which there is a single uninverted variable
$x_i$.  The resulting local-cohomology decomposition has two pieces,
corresponding to $x_i$-degree $0$ and positive $x_i$-degree, and the
attachment morphism records multiplication from the former to the latter.
The same mechanism extends naturally to an arbitrary coordinate prime.  With
several uninverted variables, their positive support may be any subset of a
coordinate set $U$, so the two-piece decomposition is replaced by a Boolean
family of summands and attachment morphisms.

Fix $U\subseteq[n]$, set $T=[n]\setminus U$, and write
\(
\pp_U:=(x_j:j\in T)\subseteq S\),
\(S_T:=\kk[x_j:j\in T]\), and
\(
\mm_T:=(x_j:j\in T).
\)
Then $\pp_U$ is a coordinate prime of the ambient polynomial ring $S$, and
\[
S/\pp_U\cong\kk[x_u:u\in U].
\]
Its image in the Stanley--Reisner ring $\kk[\Delta]=S/I_\Delta$ need not
be a prime ideal.  Under our standing convention that every singleton
$\{i\}$ is a face of $\Delta$, this image is prime if and only if
$U\in\Delta$.  Indeed,
\[
\kk[\Delta]/\pp_U\kk[\Delta]
\cong
\kk[\Delta_U],
\]
and $\Delta_U$ is the full simplex on $U$ precisely when $U\in\Delta$.

We nevertheless allow arbitrary $U\subseteq[n]$: throughout this section,
$\Hloc{q}{\pp_U}(\kk[\Delta])$ means local cohomology of the
$S$-module $\kk[\Delta]$ with support in the coordinate prime
$\pp_U\subseteq S$.
For $A\subseteq U$, define the simplicial complex on the ambient set $T$ by
\begin{equation}
\label{eq:gamma-UA}
\Gamma_A(\Delta)
:=
\{\tau\subseteq T:A\cup\tau\in\Delta\}
=
\bigl(\link_\Delta(A)\bigr)_T.
\end{equation}
When $A\notin\Delta$, this is the void complex and its Stanley--Reisner ring
is zero.  Moreover, $A\subseteq A'\subseteq U$ implies
$\Gamma_{A'}(\Delta)\subseteq\Gamma_A(\Delta)$.

The subsets $A\subseteq U$ form the Boolean lattice $2^U$.  Equivalently,
their indicator vectors form the vertices of the cube
$\{0,1\}^{|U|}$, and an inclusion $A\subset A\cup\{u\}$ is one of its
edges.  This is the sense in which the coordinate-prime structure below is
Boolean: multiplication by an uninverted variable $x_u$ moves between
summands along these edges.

For $A\subseteq U$, let
$M_A:=\bigotimes_{u\in A}x_u\kk[x_u]$, viewed as the $\kk$-span of
monomials in the $U$-variables having positive support exactly $A$;
coordinates in $U\setminus A$ have exponent zero.  Set
$M_\varnothing=\kk$.

\subsection{The coordinate-prime structure theorem}
\label{ssec:coordinate-structure}

\begin{theorem}
\label{thm:coordinate-chain-split}
Let $c\in\ZZ^U$, and let
$\check C_T^\bullet(\kk[\Delta])^{(c)}$ denote the part of the \v{C}ech
complex with fixed $U$-multidegree $c$.  If some coordinate of $c$ is
negative, then this complex is zero.  If $c\in\NN^U$ and
$A=\supp_+(c)$, there is a natural isomorphism of $\ZZ^T$-graded complexes
of $S_T$-modules
\begin{equation}
\label{eq:coordinate-chain-isomorphism}
\check C_T^\bullet(\kk[\Delta])^{(c)}
\cong
x^c\check C_{\mm_T}^\bullet(\kk[\Gamma_A(\Delta)]).
\end{equation}
Consequently,
\begin{equation}
\label{eq:coordinate-chain-direct-sum}
\check C_T^\bullet(\kk[\Delta])
\cong
\bigoplus_{A\subseteq U}
\check C_{\mm_T}^\bullet(\kk[\Gamma_A(\Delta)])
\otimes_\kk M_A
\end{equation}
as $\ZZ^n$-graded complexes of $S_T$-modules.
\end{theorem}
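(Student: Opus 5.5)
The plan is to repeat the argument of Lemma~\ref{lem:chain-split} with the single variable $x_i$ replaced by the block of uninverted variables $\{x_u\}_{u\in U}$.

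\emph{Grading splits the complex.} The \v{C}ech complex $\check C_T^\bullet(\kk[\Delta])$ inverts only the $x_j$ with $j\in T$. Its differentials are therefore sums of canonical localization maps, and these preserve the full $\Zn$-degree, in particular the $U$-part $c\in\ZZ^U$. Hence the complex is the direct sum of its subcomplexes $\check C_T^\bullet(\kk[\Delta])^{(c)}$. Each of these is stable under $S_T$, since multiplication by $x_j$ with $j\in T$ does not change $c$.

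\emph{Negative $c$ gives zero.} By Lemma~\ref{lem:localization}, a basis monomial $x^a$ of $\kk[\Delta]_{x_F}$ with $F\subseteq T$ satisfies $a_u\ge0$ for every $u\notin F$, and in particular for every $u\in U$. So the summands with some $c_u<0$ vanish.

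\emph{Identifying the summand for $c\in\NN^U$.} Write $a=(c;b)$ with $b\in\ZZ^T$, and put $A=\supp_+(c)$. Then
\[
\supp_+(a)=A\cup\supp_+(b).
\]
By Lemma~\ref{lem:localization}, $x^a\neq0$ in $\kk[\Delta]_{x_F}$ if and only if both of the following hold:
\begin{enumerate}[label=\textup{(\roman*)},nosep]
\item $b_j\ge0$ for $j\in T\setminus F$;
\item $A\cup\supp_+(b)\cup F\in\Delta$.
\end{enumerate}
Since $\supp_+(b)\cup F\subseteq T$, condition (ii) says exactly that $\supp_+(b)\cup F\in\Gamma_A(\Delta)$. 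Applying Lemma~\ref{lem:localization} over $S_T$ (see also Remark~\ref{rem:hochster-source}(a) on ghost vertices), this is precisely the condition that $x^b\neq0$ in $\kk[\Gamma_A(\Delta)]_{x_F}$.

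The assignment $x^a\mapsto x^c\cdot x^b$ is therefore a degree-preserving bijection of monomial bases, term by term and for each $F\subseteq T$. It commutes with the differentials, because both \v{C}ech differentials are given by the same formula: the signs $(-1)^{\nu_F(j)}$ and the multiplications by $x_j^N$, $j\in T$, never involve the $U$-coordinates. It is also $S_T$-linear, because multiplication by $x_j$ ($j\in T$) acts on $b$ alone and kills a monomial on one side exactly when it does on the other. The same criterion (ii) governs both sides. This gives the isomorphism \eqref{eq:coordinate-chain-isomorphism}.

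\emph{Edge cases.} If $A\notin\Delta$, then $\Gamma_A(\Delta)$ is void and its ring is $0$. On the other side, condition (ii) fails for every $F$, so both complexes vanish and the isomorphism still holds. Naturality means compatibility with the quotient maps for subcomplexes $\Delta'\subseteq\Delta$. This holds because both sides send a surviving basis monomial to itself or to $0$ according to the same face condition.

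\emph{Assembling the direct sum.} Group the multidegrees $c\in\NN^U$ by their support $A=\supp_+(c)$. For fixed $A$, the monomials $x^c$ with $\supp_+(c)=A$ form exactly a basis of $M_A$. Summing the isomorphisms for all such $c$ gives
\[
\bigoplus_{\supp_+(c)=A}x^c\,\check C_{\mm_T}^\bullet(\kk[\Gamma_A(\Delta)])\cong\check C_{\mm_T}^\bullet(\kk[\Gamma_A(\Delta)])\otimes_\kk M_A,
\]
and summing over $A\subseteq U$ yields \eqref{eq:coordinate-chain-direct-sum} as $\Zn$-graded complexes of $S_T$-modules.

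The only step needing care is the support bookkeeping: $\supp_+(a)$ splits as $A\sqcup\supp_+(b)$, and $F\subseteq T$ is disjoint from $A$, so membership in $\Delta$ translates exactly into membership in $\link_\Delta(A)$ restricted to $T$. Everything else is formal, as in Lemma~\ref{lem:chain-split}.
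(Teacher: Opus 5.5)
Your proposal is correct and follows the paper's proof essentially step for step: the vanishing for negative $c$ and the identification of the $c$-summand both come from Lemma~\ref{lem:localization} combined with the definition of $\Gamma_A(\Delta)$, followed by the monomial bijection $x^cx^b\mapsto x^c\otimes x^b$ and by grouping the exponents $c\in\NN^U$ according to their positive support. You also spell out details the paper leaves implicit, namely that the \v{C}ech signs match because $F\subseteq T$ and $j\in T$, that the bijection is $S_T$-linear, and that the void case $A\notin\Delta$ holds.
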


\begin{proof}
The \v{C}ech complex for $\pp_U$ inverts only the variables $x_j$ with
$j\in T$.  Hence a negative exponent in a $U$-coordinate cannot occur in
any localization.  Now let $c\in\NN^U$, put $A=\supp_+(c)$, and consider
a \v{C}ech summand indexed by $F\subseteq T$.  For $b\in\ZZ^T$,
Lemma~\ref{lem:localization} says that $x^cx^b$ is nonzero in
$\kk[\Delta]_{x_F}$ exactly when $b_j\geq0$ for $j\in T\setminus F$ and
$A\cup\supp_+(b)\cup F\in\Delta$.  By \eqref{eq:gamma-UA}, this is exactly
the condition that $x^b$ be nonzero in
$\kk[\Gamma_A(\Delta)]_{x_F}$.  The bijection
$x^cx^b\mapsto x^c\otimes x^b$ preserves the localization maps and
\v{C}ech signs, giving \eqref{eq:coordinate-chain-isomorphism}.  Summing
over $c\in\NN^U$ and grouping the exponent vectors by their positive
supports gives \eqref{eq:coordinate-chain-direct-sum}.
\end{proof}

\begin{theorem}
\label{thm:coordinate-structure}
For every $q\geq0$, there is an isomorphism of $\ZZ^n$-graded
$S_T$-modules
\begin{equation}
\label{eq:coordinate-structure}
\Hloc{q}{\pp_U}(\kk[\Delta])
\cong
\bigoplus_{A\subseteq U}
\Hloc{q}{\mm_T}(\kk[\Gamma_A(\Delta)])\otimes_\kk M_A.
\end{equation}
More explicitly, for $c\in\ZZ^U$ and $b\in\ZZ^T$,
\begin{equation}
\label{eq:coordinate-multidegree}
\Hloc{q}{\pp_U}(\kk[\Delta])_{(c;b)}
\cong
\begin{cases}
\Hloc{q}{\mm_T}(\kk[\Gamma_A(\Delta)])_b,
& c\in\NN^U,\ A=\supp_+(c),\\
0,& c\notin\NN^U.
\end{cases}
\end{equation}
For $u\in U$, multiplication by $x_u$ acts as follows:
\begin{enumerate}[label=\textup{(\roman*)},leftmargin=2.2em]
\item if $u\in A$, it is the ordinary exponent shift within the
$A$-summand;
\item if $u\notin A$, it is the map from the $A$-summand to the
$A\cup\{u\}$-summand induced by
$\kk[\Gamma_A(\Delta)]\twoheadrightarrow
\kk[\Gamma_{A\cup\{u\}}(\Delta)]$.
\end{enumerate}
These maps make \eqref{eq:coordinate-structure} an isomorphism of
$S=S_T[x_u:u\in U]$-modules.  For distinct
$u,v\in U\setminus A$, the square obtained by adjoining $u$ and $v$ in
either order commutes.
\end{theorem}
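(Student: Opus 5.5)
The plan is to deduce Theorem~\ref{thm:coordinate-structure} from the chain-level splitting of Theorem~\ref{thm:coordinate-chain-split}, in the same way Theorem~\ref{thm:structure} follows from Lemma~\ref{lem:chain-split}. The key observation is that $\Hloc{q}{\pp_U}(\kk[\Delta])$ is the cohomology of $\check C^\bullet_T(\kk[\Delta])$. All differentials of this complex preserve the full $\ZZ^n$-grading, in particular the $U$-multidegree $c$, so cohomology commutes with the decomposition $\check C^\bullet_T(\kk[\Delta])=\bigoplus_{c\in\ZZ^U}\check C^\bullet_T(\kk[\Delta])^{(c)}$. In each multidegree this sum is finite.

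\emph{The graded statements.} For $c\notin\NN^U$ the summand is zero, which gives the vanishing case of \eqref{eq:coordinate-multidegree}. For $c\in\NN^U$ with $A=\supp_+(c)$, the isomorphism \eqref{eq:coordinate-chain-isomorphism} is a shift by $x^c$ of the \v{C}ech complex of $\kk[\Gamma_A(\Delta)]$ over $S_T$. Taking cohomology in multidegree $b$ then gives the nonvanishing case of \eqref{eq:coordinate-multidegree}. For fixed $A$, the $c$ with support exactly $A$ are indexed by the monomial basis of $M_A$. Grouping the $c$ by support therefore yields \eqref{eq:coordinate-structure} as $\ZZ^n$-graded vector spaces.

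$S_T$-linearity is checked as in the proof of Theorem~\ref{thm:structure}. Multiplication by $x_j$ with $j\in T$ preserves $c$ and commutes with the bijection $x^cx^b\mapsto x^c\otimes x^b$. This bijection also preserves the void-complex case: if $A\notin\Delta$, both sides are zero.

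\emph{The $x_u$-action.} I would compute the action of $x_u$ for $u\in U$ at the chain level and then pass to cohomology. Multiplication by $x_u$ on $\check C^\bullet_T(\kk[\Delta])$ commutes with the \v{C}ech differential, since $x_u$ is never inverted. It sends the $c$-piece to the $(c+\mathbf e_u)$-piece, carrying the basis element $x^cx^b$ in the summand indexed by $F$ to $x^{c+\mathbf e_u}x^b$. By Lemma~\ref{lem:localization} the image is nonzero exactly when $\supp_+(c+\mathbf e_u)\cup\supp_+(b)\cup F\in\Delta$. There are two cases.
\begin{enumerate}[label=\textup{(\roman*)},nosep]
\item If $u\in A$, the support is unchanged and the nonvanishing condition is the same on both sides. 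Under \eqref{eq:coordinate-chain-isomorphism} the map is the identity on $\check C^\bullet_{\mm_T}(\kk[\Gamma_A(\Delta)])$, tensored with the exponent shift on $M_A$.
\item If $u\notin A$, the support becomes $A\cup\{u\}$. The map sends $x^b$ to $x^b$ when $x^b$ is nonzero in $\kk[\Gamma_{A\cup\{u\}}(\Delta)]_{x_F}$, and to $0$ otherwise. This is exactly the chain map induced by the surjection $\kk[\Gamma_A(\Delta)]\twoheadrightarrow\kk[\Gamma_{A\cup\{u\}}(\Delta)]$, which exists because $\Gamma_{A\cup\{u\}}(\Delta)\subseteq\Gamma_A(\Delta)$. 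The argument is the same as in the proof of Proposition~\ref{prop:xi-action}.
\end{enumerate}
Passing to cohomology gives (i) and (ii). Defining the $S$-action on the right side of \eqref{eq:coordinate-structure} by these formulas makes the isomorphism $S$-linear, since it is induced by an $S$-linear identification of complexes. In particular the $x_u$ commute with each other and with $S_T$ on the right side, because they do so on the left.

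\emph{The commuting square.} For distinct $u,v\in U\setminus A$, going from the $A$-summand to the $A\cup\{u,v\}$-summand in either order is induced by the composite surjection $\kk[\Gamma_A]\twoheadrightarrow\kk[\Gamma_{A\cup\{u\}}]\twoheadrightarrow\kk[\Gamma_{A\cup\{u,v\}}]$ or by the one through $\Gamma_{A\cup\{v\}}$. Both composites equal the single quotient $\kk[\Gamma_A]\twoheadrightarrow\kk[\Gamma_{A\cup\{u,v\}}]$, and functoriality of $\Hloc{q}{\mm_T}$ then gives commutativity. Alternatively, it follows directly from $x_ux_v=x_vx_u$.

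The main obstacle is bookkeeping rather than substance. One must check that the identification $x^cx^b\mapsto x^c\otimes x^b$ sends $x_u$-multiplication to exactly the stated maps with no extra signs. This holds because the \v{C}ech signs depend only on $F\subseteq T$ and not on $c$. One must also handle summands with $A\notin\Delta$, where both sides are zero and the maps into them are trivially the induced ones.
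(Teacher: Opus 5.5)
Your proposal is correct and follows the paper's proof essentially step for step. You take cohomology in the chain-level splitting of Theorem~\ref{thm:coordinate-chain-split}, and you identify multiplication by $x_u$ at the chain level as either the exponent shift on $M_A$ (for $u\in A$) or the \v{C}ech map induced by $\kk[\Gamma_A(\Delta)]\twoheadrightarrow\kk[\Gamma_{A\cup\{u\}}(\Delta)]$ (for $u\notin A$); the Boolean square then commutes because both composites equal the single quotient onto $\kk[\Gamma_{A\cup\{u,v\}}(\Delta)]$. Your version only spells out more explicitly the $S_T$-linearity, the sign bookkeeping, and the void case.
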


\begin{proof}
Taking cohomology in Theorem~\ref{thm:coordinate-chain-split} gives
\eqref{eq:coordinate-structure} and \eqref{eq:coordinate-multidegree}.  If
$u\in A$, multiplication by $x_u$ does not change the positive
$U$-support, so it shifts the exponent in $M_A$.  If $u\notin A$, the
positive support changes from $A$ to $A\cup\{u\}$.  At chain level,
multiplication retains a face $\tau\subseteq T$ precisely when
$A\cup\{u\}\cup\tau\in\Delta$; otherwise it sends the corresponding
monomial to zero.  This is exactly the \v{C}ech map induced by the quotient
in \textup{(ii)}.  Passing to cohomology proves the description of the
$x_u$-action.  Adjoining $u$ and $v$ in either order gives the same quotient
onto $\kk[\Gamma_{A\cup\{u,v\}}(\Delta)]$, so every Boolean square
commutes.
\end{proof}

For $U=\{i\}$, the two subsets are $\varnothing$ and $\{i\}$, with
$\Gamma_\varnothing(\Delta)=\del_\Delta(i)$ and
$\Gamma_{\{i\}}(\Delta)=\link_\Delta(i)$.  Thus
Theorem~\ref{thm:coordinate-structure} specializes to
Theorem~\ref{thm:structure}, and its unique Boolean edge is the
$x_i$-action of Proposition~\ref{prop:xi-action}.  In particular, the
attachment morphism of Section~\ref{sec:attachment} is the one-dimensional
Boolean case of the structure above.

At the level of fixed multigraded vector-space dimensions,
\eqref{eq:coordinate-multidegree} is a specialization of Rahimi's
formula for local cohomology with support in a monomial prime
\cite{Rahimi}.  The additional structure retained here is the natural
chain-level decomposition, the action of all uninverted variables,
their organization into the commuting Boolean multiplication diagram,
and the functoriality of this structure under simplicial filtrations.

\subsection{The coordinate-prime persistent Hochster formula}
\label{ssec:coordinate-persistent}

For a filtration $\Delta^\bullet$, write
$\Gamma_A^t:=\Gamma_A(\Delta^t)$.  For $s\leq t$, the quotient
$\kk[\Delta^t]\twoheadrightarrow\kk[\Delta^s]$ preserves
$U$-multidegrees, so Theorem~\ref{thm:coordinate-structure} is natural in
$t$.

\begin{theorem}
\label{thm:coordinate-persistent-hochster}
Fix $q$, $s\leq t$, $c\in\ZZ^U$, and $b\in\ZZ^T$.  Set
$\sigma=\supp_-(b)$ and $d=q-|\sigma|-1$.  The multigraded persistence
module vanishes if $c\notin\NN^U$ or
$\supp_+(b)\neq\varnothing$.  Otherwise, set $A=\supp_+(c)$.  There is a
natural isomorphism of $\RR^{\mathrm{op}}$-indexed persistence modules
\begin{equation}
\label{eq:coordinate-persistence-isomorphism}
\Bigl(\Hloc{q}{\pp_U}(\kk[\Delta^t])_{(c;b)}\Bigr)_t
\cong
\Bigl(
\rH^d\bigl((\link_{\Delta^t}(A\cup\sigma))_T;\kk\bigr)
\Bigr)_t.
\end{equation}
Consequently, its persistent rank from $t$ to $s$ is
\begin{equation}
\label{eq:coordinate-persistent-rank}
\rank\Bigl(
\rH^d\bigl((\link_{\Delta^t}(A\cup\sigma))_T;\kk\bigr)
\longrightarrow
\rH^d\bigl((\link_{\Delta^s}(A\cup\sigma))_T;\kk\bigr)
\Bigr).
\end{equation}
The map is the restriction in reduced cohomology induced by inclusion, and a
void link contributes zero.  Up to canonical isomorphism, only the
squarefree representatives $c=\mathbf 1_A$ and
$b=-\mathbf 1_\sigma$ are needed.
\end{theorem}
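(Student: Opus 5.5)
The proof follows the template of Theorem~\ref{thm:persistent-hochster}, with Theorem~\ref{thm:coordinate-structure} replacing Theorem~\ref{thm:structure}. First the vanishing cases. If $c\notin\NN^U$, then \eqref{eq:coordinate-multidegree} gives zero at every level $t$. If $c\in\NN^U$ and $\supp_+(b)\neq\varnothing$, then with $A=\supp_+(c)$ the piece is $\Hloc{q}{\mm_T}(\kk[\Gamma_A(\Delta^t)])_b$, and this vanishes by Theorem~\ref{thm:hochster-classical}, applied over $S_T$ and using Remark~\ref{rem:hochster-source}(a) for non-vertex variables. In both cases the module is zero at every level, so the rank is zero.

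Now fix $c\in\NN^U$ with $A=\supp_+(c)$, and $b$ with $\supp_+(b)=\varnothing$. The first step is naturality in $t$. The chain isomorphism \eqref{eq:coordinate-chain-isomorphism} sends the basis monomial $x^cx^b$ in the summand indexed by $F$ to $x^c\otimes x^b$ in the same summand. For $s\le t$, the quotient $\kk[\Delta^t]\twoheadrightarrow\kk[\Delta^s]$ sends this basis element to itself or to zero, according to whether $A\cup\supp_+(b)\cup F\in\Delta^s$. The quotient $\kk[\Gamma_A^t]\twoheadrightarrow\kk[\Gamma_A^s]$ obeys exactly the same rule. So the squares commute at chain level, and the $U$-multidegree-$c$ slice of the persistence module is naturally isomorphic to $\bigl(\Hloc{q}{\mm_T}(\kk[\Gamma_A^t])_b\bigr)_t$. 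The second step applies Step~1 of the proof of Theorem~\ref{thm:persistent-hochster} to the nested complexes $\Gamma_A^s\subseteq\Gamma_A^t$ on the vertex set $T$. This gives a natural isomorphism with $\bigl(\rH^{d}(\link_{\Gamma_A^t}(\sigma);\kk)\bigr)_t$ under restriction maps, and it includes the void case $\sigma\notin\Gamma_A^s$. Composing the two natural isomorphisms yields a natural isomorphism of $\RR^{\mathrm{op}}$-indexed modules.

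The remaining combinatorial step is the identification $\link_{\Gamma_A(\Delta')}(\sigma)=(\link_{\Delta'}(A\cup\sigma))_T$ for $\sigma\subseteq T$. A set $\tau$ lies in the left side iff $\tau\subseteq T$, $\tau\cap\sigma=\varnothing$ and $A\cup\sigma\cup\tau\in\Delta'$. Since $A\subseteq U$ is disjoint from $T\supseteq\sigma\cup\tau$, this is exactly membership of $\tau$ in $\link_{\Delta'}(A\cup\sigma)$ together with $\tau\subseteq T$. This is the argument of Corollary~\ref{cor:static-comb}, with $A$ in place of $\{i\}$. The identification is also compatible with inclusions $\Delta^s\subseteq\Delta^t$. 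The void conventions match: the left side is void iff $\sigma\notin\Gamma_A$, iff $A\cup\sigma\notin\Delta'$, iff the right side is void. This proves \eqref{eq:coordinate-persistence-isomorphism}, and taking ranks gives \eqref{eq:coordinate-persistent-rank}.

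Finally, the right-hand side depends on $(c,b)$ only through $A=\supp_+(c)$ and $\sigma=\supp_-(b)$. So the modules for all such $(c,b)$ are canonically isomorphic to the one at $c=\mathbf 1_A$, $b=-\mathbf 1_\sigma$, exactly as in Proposition~\ref{prop:reduction}(b). I expect the main obstacle to be the chain-level naturality in the first step: one must check that the \v{C}ech signs of Theorem~\ref{thm:coordinate-chain-split} are independent of $t$. They are, because they depend only on $F\subseteq T$ and the fixed ordering.
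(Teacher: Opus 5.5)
Your proposal is correct and follows essentially the same route as the paper. You reduce via Theorem~\ref{thm:coordinate-structure} to $\Hloc{q}{\mm_T}(\kk[\Gamma_A^t])_b$, apply the natural cochain-level Hochster isomorphism, and identify $\link_{\Gamma_A^t}(\sigma)=(\link_{\Delta^t}(A\cup\sigma))_T$; the chain-level naturality in $t$ and the matching of void conventions, which the paper only asserts, are a welcome added check but do not change the argument.
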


\begin{proof}
By Theorem~\ref{thm:coordinate-structure}, naturally in $t$, the left-hand
side reduces to
$\Hloc{q}{\mm_T}(\kk[\Gamma_A^t])_b$.  Hochster's formula identifies this
with $\rH^d(\link_{\Gamma_A^t}(\sigma);\kk)$ and gives the stated
vanishing.  From \eqref{eq:gamma-UA},
\[
\link_{\Gamma_A^t}(\sigma)
=
\{\tau\subseteq T\setminus\sigma:
A\cup\sigma\cup\tau\in\Delta^t\}
=
(\link_{\Delta^t}(A\cup\sigma))_T.
\]
The cochain-level Hochster isomorphism is natural for inclusions, as used in
Theorem~\ref{thm:persistent-hochster}; hence these levelwise
identifications form \eqref{eq:coordinate-persistence-isomorphism}.  The
rank formula is the structure-map rank of the resulting reversed
cohomology persistence module.  Neither side depends on the positive
magnitudes of $c$ or the negative magnitudes of $b$, only on $A$ and
$\sigma$.
\end{proof}

The explicit degree-$(-1)$ formula from
Proposition~\ref{cor:minus-one-bars} also extends directly to every
coordinate prime.

\begin{corollary}
\label{cor:coordinate-minus-one}
In Theorem~\ref{thm:coordinate-persistent-hochster}, suppose $d=-1$ and
$A\cup\sigma\in\Delta$.  The barcode consists of the single possible
interval
\begin{equation}
\label{eq:coordinate-minus-one}
\left[
f(A\cup\sigma),
\min_{\substack{v\in T\setminus\sigma\\
A\cup\sigma\cup\{v\}\in\Delta}}
f(A\cup\sigma\cup\{v\})
\right),
\end{equation}
with the same convention that the right endpoint is $\infty$ if the
displayed set of vertices is empty.
\end{corollary}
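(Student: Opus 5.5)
The plan is to reduce Corollary~\ref{cor:coordinate-minus-one} to the one-dimensional case already treated in Proposition~\ref{cor:minus-one-bars}, using the persistent identification of Theorem~\ref{thm:coordinate-persistent-hochster}. By \eqref{eq:coordinate-persistence-isomorphism}, the barcode in question is the barcode of the reversed persistence module $t\mapsto\rH^{-1}\bigl(K^t;\kk\bigr)$ with $K^t:=(\link_{\Delta^t}(A\cup\sigma))_T$ and restriction maps. Isomorphic persistence modules have equal barcodes, so it suffices to compute this module explicitly.

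The key observation is the one used before: for a simplicial complex $K$, $\rH^{-1}(K;\kk)\cong\kk$ exactly when $K=\{\varnothing\}$, and it is $0$ both for the void complex and for any complex containing a vertex. I will determine for which $t$ the complex $K^t$ equals $\{\varnothing\}$. First, $\varnothing\in K^t$ iff $A\cup\sigma\in\Delta^t$, that is, iff $t\ge f(A\cup\sigma)$. Before this value $K^t$ is void. Second, a vertex $v$ lies in $K^t$ iff $v\in T\setminus\sigma$ and $A\cup\sigma\cup\{v\}\in\Delta^t$. Here $v\notin A$ holds automatically because $v\in T$, and $v\notin\sigma$ is the link condition. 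So the first vertex appears at
\[
t_1=\min\{f(A\cup\sigma\cup\{v\}):v\in T\setminus\sigma,\ A\cup\sigma\cup\{v\}\in\Delta\},
\]
with $t_1=\infty$ if this set is empty. By monotonicity of $f$, $t_1\ge f(A\cup\sigma)$, and once a vertex is present it stays present. Higher faces of $K^t$ contain vertices, so they cannot appear before $t_1$. Hence $K^t=\{\varnothing\}$ exactly for $t\in[f(A\cup\sigma),t_1)$.

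It remains to check that the module on this interval is the interval module $\kk_J$ and not something with a nontrivial internal map. For $s\le t$ both inside $J$, the restriction $\rH^{-1}(\{\varnothing\})\to\rH^{-1}(\{\varnothing\})$ is induced by the identity inclusion, so it is the identity on $\kk$. Outside $J$ the module vanishes. Thus the module is $\kk_{[f(A\cup\sigma),t_1)}$, which gives \eqref{eq:coordinate-minus-one}, and if $f(A\cup\sigma)=t_1$ the bar has zero length and is omitted.

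The only point needing care is matching the half-open endpoint conventions of the sublevel filtration $\Delta^t=\{f\le t\}$. This is settled exactly as in Proposition~\ref{cor:minus-one-bars}: the class is born at the closed value $f(A\cup\sigma)$ and dies at the closed value $t_1$, where the vertex is already present. I expect no real obstacle, since the argument is Proposition~\ref{cor:minus-one-bars} with $\{i\}\cup\sigma$ replaced by $A\cup\sigma$ and the vertex range replaced by $T\setminus\sigma$.
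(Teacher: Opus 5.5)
Your proposal is correct and follows the paper's own argument. You use Theorem~\ref{thm:coordinate-persistent-hochster} to reduce to the reversed module $\rH^{-1}\bigl((\link_{\Delta^t}(A\cup\sigma))_T;\kk\bigr)$, note that this complex is void before $f(A\cup\sigma)$ and equals $\{\varnothing\}$ until its first vertex appears at the stated minimum, and conclude with the characterization of $\rH^{-1}$ from Proposition~\ref{cor:minus-one-bars}; your extra check that the structure maps are the identity on that interval, so the module is the interval module, only makes explicit a step the paper leaves implicit.
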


\begin{proof}
The complex $(\link_{\Delta^t}(A\cup\sigma))_T$ changes from void to
$\{\varnothing\}$ when $A\cup\sigma$ appears and acquires its first vertex
exactly at the right endpoint of \eqref{eq:coordinate-minus-one}.  The claim
therefore follows from the characterization of reduced
$(-1)$-cohomology used in Proposition~\ref{cor:minus-one-bars}.
\end{proof}

\subsection{The persistent Boolean multiplication diagram}
\label{ssec:boolean-attachment}

The coordinate-prime structure retains more than the persistence of the
individual summands.  For each $A\subseteq U$, each
$u\in U\setminus A$, and each relevant face $\sigma$, put
\[
K_{A,\sigma}^t
:=
(\link_{\Delta^t}(A\cup\sigma))_T.
\]
Then
\[
K_{A\cup\{u\},\sigma}^t
\subseteq
K_{A,\sigma}^t,
\]
and Theorems~\ref{thm:coordinate-structure} and
\ref{thm:coordinate-persistent-hochster} identify multiplication by $x_u$
with the natural transformation
\[
\boldsymbol\rho^q_{A,u,\sigma}\colon
\Bigl(\widetilde H^d(K_{A,\sigma}^t;\kk)\Bigr)_t
\longrightarrow
\Bigl(\widetilde H^d(K_{A\cup\{u\},\sigma}^t;\kk)\Bigr)_t,
\qquad
d=q-|\sigma|-1.
\]

As $A$ ranges over $2^U$, these morphisms form a commuting Boolean diagram of
$\RR^{\mathrm{op}}$-indexed persistence modules.  In particular, for distinct
$u,v\in U\setminus A$, the diagram
\[
\begin{tikzcd}
\mathbb P^q_{A,\sigma}
\arrow[r,"x_u"]
\arrow[d,"x_v"']
&
\mathbb P^q_{A\cup\{u\},\sigma}
\arrow[d,"x_v"]
\\
\mathbb P^q_{A\cup\{v\},\sigma}
\arrow[r,"x_u"']
&
\mathbb P^q_{A\cup\{u,v\},\sigma}
\end{tikzcd}
\]
commutes, where
\(
\mathbb P^q_{A,\sigma}
:=
\Bigl(\widetilde H^d(K_{A,\sigma}^t;\kk)\Bigr)_t.
\)

For $s\leq t$, the rank of the edge morphism from level $t$ to level $s$ is
\[
\chi^{q,s\to t}_{U,A,u,\sigma}
=
\rank\Bigl(
\widetilde H^d(K_{A,\sigma}^t;\kk)
\longrightarrow
\widetilde H^d(K_{A\cup\{u\},\sigma}^s;\kk)
\Bigr).
\]
Thus the coordinate-prime theory organizes the multiplication maps of all
uninverted variables simultaneously over the Boolean lattice, rather than
retaining the summand barcodes independently.

Each Boolean edge inherits the same stability bound as the vertex-prime
attachment morphism.  More precisely, if
$f,g\colon\Delta\to\RR$ are monotone filtration functions and
$\delta=\|f-g\|_\infty$, then, for every $A\subseteq U$,
$u\in U\setminus A$, $q$, and relevant face $\sigma$, the corresponding
edge morphisms
\(
\boldsymbol\rho^q_{A,u,\sigma}(f)
\) and \(
\boldsymbol\rho^q_{A,u,\sigma}(g)
\)
are $\delta$-interleaved in the arrow category of
$\RR^{\mathrm{op}}$-indexed persistence modules.  Indeed, the
interleaving inclusions for $\Delta_f^\bullet$ and $\Delta_g^\bullet$
remain inclusions after applying
\[
\Delta'\longmapsto
\bigl(\link_{\Delta'}(A\cup\sigma)\bigr)_T
\]
and are compatible with
\[
\bigl(\link_{\Delta'}(A\cup\{u\}\cup\sigma)\bigr)_T
\subseteq
\bigl(\link_{\Delta'}(A\cup\sigma)\bigr)_T.
\]
Thus the argument of Corollary~\ref{thm:attachment-stability}
applies verbatim to every Boolean edge.

For $U=\{i\}$ the Boolean lattice has two vertices and one edge, and the
diagram reduces exactly to
\(
\boldsymbol\rho^q_{i,\sigma}\colon
\mathbb D^q_{i,\sigma}\longrightarrow
\mathbb L^q_{i,\sigma}.
\)

\section{Relation to existing work and scope}\label{sec:scope}

We now situate the construction within the program of \cite{SuwayyidWeiPSRT,SuwayyidWeiGraphs}.

Theorem~\ref{thm:structure} reduces $\Hloc{\bullet}{\pp_i}(\kk[\Delta])$ to maximal-support local
cohomology of the link and deletion of $i$, and Corollary~\ref{cor:static-comb} expresses each graded
piece as reduced simplicial cohomology of an iterated link of $\Delta$ over the field $\kk$. This is
combinatorial data of the same kind---reduced homology of complexes derived from $\Delta$, with
$\kk$-coefficients---that Hochster's Tor-side formula attaches to the multigraded Betti numbers of
$\kk[\Delta]$ (there through induced subcomplexes rather than links). The per-vertex barcode is not a homological quantity completely distinct from the Tor-side persistent Betti numbers of \cite{SuwayyidWeiGraphs}: it persists the same kind of
reduced-homology data, reorganized on the injective side and localized at a single vertex. The new
content is therefore (i) the per-vertex localization $i\mapsto\pp_i$, which isolates the contribution
of one vertex; (ii) the explicit deletion/link decomposition of Theorem~\ref{thm:structure}; and (iii)
the resulting per-vertex barcodes and their bottleneck stability. The localization (i) is what lets the
barcodes attach a feature to a single vertex, as in Example~\ref{ex:cone}, where the top of a cone is
singled out by an essential degree-$2$ bar.

\paragraph{What this construction does not capture, and a sharper open problem.}

The face-indexed barcodes record the persistence of
$\dim_\kk\Hloc{q}{\mm'}(\kk[\del_{\Delta^t}(i)])_b$
and
$\dim_\kk\Hloc{q}{\mm'}(\kk[\link_{\Delta^t}(i)])_b$
separately---equivalently, of the pointwise vector-space persistence modules
$\mathbb V^q_{i,(c;b)}$ (Theorem~\ref{thm:summand-iso}(a)). They do not by
themselves determine the $\Zn$-graded $S$-module
$\Hloc{q}{\pp_i}(\kk[\Delta])$ itself. Two kinds of data are not contained in
the separate barcodes. First, the action of the remaining variable $x_i$: it
sends the $x_i$-degree-$0$ (deletion) part to the degree-$1$ (link) part
through the map on $\Hloc{q}{\mm'}(-)$ induced by the quotient
$\kk[\del_\Delta(i)]\twoheadrightarrow\kk[\link_\Delta(i)]$, whose rank is
not recovered from the two summands' barcodes. This first piece of missing
information is recovered in Section~\ref{sec:attachment}, where the
$x_i$-action is retained as the natural transformation
\[
\boldsymbol\rho^q_{i,\sigma}\colon
\mathbb D^q_{i,\sigma}\longrightarrow
\mathbb L^q_{i,\sigma}.
\]
For coordinate primes, Section~\ref{sec:coordinate-primes} organizes the
corresponding multiplication maps of the uninverted variables into a
commuting Boolean diagram. Second, the separate barcodes do not retain the
internal $S'$-multiplications between different multidegrees $b$, which
Gr\"abe realizes \cite{Grabe} as maps between reduced cohomologies of distinct
links, not determined by their Betti numbers alone. The barcodes therefore
classify the individual one-parameter modules $\mathbb V^q_{i,(c;b)}$, but
not by themselves the full multigraded module-valued persistence object.

Beyond this, the construction inherits exactly the characteristic dependence of ordinary Hochster
theory and no more; it does not see the finer characteristic-dependent injective invariants---Bass
numbers and Lyubeznik numbers---of $\kk[\Delta]$
\cite{AlvarezMontanerSohrabi,HelmMiller,AlvarezMontanerVahidi}, whose persistence lies genuinely
outside both the Tor-side framework and the present one. Persisting these across a filtration would
record information different from both the present barcodes and from the Tor-side persistent Betti numbers of
\cite{SuwayyidWeiGraphs}. We regard this as the most promising direction beyond the present paper.

The proposed localized persistent commutative algebra can be readily applied to a wide variety of problems in science and engineering. In particular, it can be used to characterize the global properties of a specific data point, such as an individual node within a network, as well as the local or internal properties of a specific data point, such as a drug molecule in the DrugBank database.



\section{Conclusion and outlook}\label{sec:conclusion}

We have localized the persistent Stanley--Reisner theory of
\cite{SuwayyidWeiPSRT,SuwayyidWeiGraphs} at a vertex prime where each cohomology module is splitting into a deletion and link summand. Then they are persisted across a filtration to provide the gives per-vertex barcodes
(Definition~\ref{def:barcode}), a persistent links--Hochster formula
(Theorem~\ref{thm:persistent-hochster}), interval decomposability
(Theorem~\ref{thm:finite-type}), bottleneck stability (Theorem~\ref{thm:stability})

Some possible future ideas including : 
\paragraph{Bass numbers.} The invariants persisted here are graded dimensions of
local cohomology, and Corollary~\ref{cor:static-comb} shows that they depend on the
field $\kk$ only through the reduced cohomology of links. The injective side carries
finer invariants that this dependence does not see. For a prime $\pp$ of a Noetherian
ring $R$ and an $R$-module $M$, the Bass numbers
\[
\mu^p(\pp,M)\;=\;\dim_{\kappa(\pp)}\Ext^p_{R_\pp}\bigl(\kappa(\pp),M_\pp\bigr),
\qquad \kappa(\pp)=R_\pp/\pp R_\pp,
\]
count the copies of the injective hull $E_R(R/\pp)$ in the minimal injective
resolution of $M$ \cite[\S3.2]{BrunsHerzog}; they are the injective-side analogue of
the graded Betti numbers persisted in \cite{SuwayyidWeiGraphs}. For local cohomology
of monomial ideals these have been computed combinatorially by Helm--Miller
\cite{HelmMiller} in the semigroup-graded setting and by \`Alvarez Montaner--Sohrabi
\cite{AlvarezMontanerSohrabi} for cover ideals of graphs. But a persistent theory of Bass
numbers does not follow from the present one; it requires a separate finiteness and
functoriality analysis.

\paragraph{Lyubeznik numbers.} Let $S=\kk[x_1,\dots,x_n]$, let $I\subseteq S$ be a
graded ideal, and set $R=S/I$ of dimension $d$. The Lyubeznik numbers
\[
\lambda_{p,q}(R)\;=\;\mu^p\bigl(\mm,\Hloc{n-q}{I}(S)\bigr)
\]
are finite and independent of the presentation of $R$ \cite{Lyubeznik}; finiteness of
the relevant Bass numbers is due to Huneke--Sharp in positive characteristic
\cite{HunekeSharp} and to Lyubeznik in characteristic zero \cite{Lyubeznik}. For
monomial ideals they admit a combinatorial description, and---unlike the numbers
persisted here---they genuinely depend on the characteristic of $\kk$ and detect
properties invisible to Hochster's formula alone
\cite{AlvarezMontanerVahidi}. The Alexander duality functors of Miller
\cite{Miller} and the squarefree-module formalism of Yanagawa \cite{Yanagawa}
provide the machinery in which such a persistent theory would most naturally be
developed. We regard persistent Bass and Lyubeznik numbers as the most substantial
open direction beyond this paper: they record information different both from the
present barcodes and from the Tor-side persistent Betti numbers of
\cite{SuwayyidWeiGraphs}.

\section*{Acknowledgment}
The work of KH was supported in part by the University of Georgia and Syracuse University. 
The work of FS was supported in part by the King Fahd University of Petroleum and Minerals. 
The work of GWW was supported in part by the University of Georgia, Georgia Research Alliance, and   NIH R35GM148196. 

\bibliographystyle{plain}

\bibliography{references}

@article{AlvarezMontanerSohrabi,
  author  = {J. {\`A}lvarez Montaner and F. Sohrabi},
  title   = {Bass numbers of local cohomology of cover ideals of graphs},
  journal = {J. Algebraic Combin.},
  volume  = {53},
  number  = {1},
  year    = {2021},
  pages   = {263--297}
}

@article{zia2025gbnl,
  title={{GBNL: Graded Betti} number learning of complex biological data},
  author={Zia, Mushal and Suwayyid, Faisal and Wei, Guo-Wei},
  journal={Foundations of Data Science},
  pages= {Doi: 10.3934/fods.2026014},
  year={2026}
}

@article{feng2025caml,
  title={CAML: Commutative Algebra Machine Learning A Case Study on Protein--Ligand Binding Affinity Prediction},
  author={Feng, Hongsong and Suwayyid, Faisal and Zia, Mushal and Wee, JunJie and Hozumi, Yuta and Chen, Chun-Long and Wei, Guo-Wei},
  journal={Journal of Chemical Information and Modeling},
  volume={65},
  number={13},
  pages={6732},
  year={2025}
}

@article{zhang2026commutative,
  title={Commutative Algebra Learning for Protein Flexibility Analysis},
  author={Zhang, Honghao and Feng, Hongsong},
  journal={arXiv preprint arXiv:2607.00879},
  year={2026}
}

@article{hu2025commutative,
  title={Commutative algebra-enhanced topological data analysis},
  author={Hu, Chuanshen and Wang, Yu and Xia, Kelin and Ye, Ke and Zhang, Yipeng},
  journal={arXiv preprint arXiv:2504.09174},
  year={2025}
}

@article{ren2025interpretability,
  title={Interpretability and Representability of Commutative Algebra, Algebraic Topology, and Topological Spectral Theory for Real-World Data},
  author={Ren, Yiming and Wei, Guo-Wei},
  journal={Advanced intelligent discovery},
  pages={e202500207},
  year={2025},
  publisher={Wiley Online Library}
}

@article{HelmMiller,
  author  = {D. Helm and E. Miller},
  title   = {Bass numbers of semigroup-graded local cohomology},
  journal = {Pacific J. Math.},
  volume  = {209},
  number  = {1},
  year    = {2003},
  pages   = {41--66}
}

@article{BubenikScott,
  author  = {P. Bubenik and J. A. Scott},
  title   = {Categorification of persistent homology},
  journal = {Discrete Comput. Geom.},
  volume  = {51},
  number  = {3},
  year    = {2014},
  pages   = {600--627}
}

@book{StanleyCCA,
  author    = {R. P. Stanley},
  title     = {Combinatorics and Commutative Algebra},
  edition   = {2nd},
  series    = {Progress in Mathematics},
  volume    = {41},
  publisher = {Birkh{\"a}user},
  year      = {1996}
}

@book{BrunsHerzog,
  author    = {W. Bruns and J. Herzog},
  title     = {Cohen--Macaulay Rings},
  series    = {Cambridge Studies in Advanced Mathematics},
  volume    = {39},
  publisher = {Cambridge University Press},
  year      = {1998}
}

@book{Eisenbud,
  author    = {D. Eisenbud},
  title     = {Commutative Algebra with a View Toward Algebraic Geometry},
  series    = {Graduate Texts in Mathematics},
  volume    = {150},
  publisher = {Springer},
  year      = {1995}
}

@book{MillerSturmfels,
  author    = {E. Miller and B. Sturmfels},
  title     = {Combinatorial Commutative Algebra},
  series    = {Graduate Texts in Mathematics},
  volume    = {227},
  publisher = {Springer},
  year      = {2005}
}

@article{Rahimi,
  author  = {A. Rahimi},
  title   = {Tameness of local cohomology of monomial ideals with respect to monomial prime ideals},
  journal = {J. Pure Appl. Algebra},
  volume  = {211},
  number  = {1},
  year    = {2007},
  pages   = {83--93},
  note    = {arXiv:math/0607256}
}

@article{EisenbudMustataStillman,
  author  = {D. Eisenbud and M. Musta{\c{t}}{\u{a}} and M. Stillman},
  title   = {Cohomology on toric varieties and local cohomology with monomial supports},
  journal = {J. Symbolic Comput.},
  volume  = {29},
  year    = {2000},
  pages   = {583--600}
}

@article{Grabe,
  author  = {H.-G. Gr{\"a}be},
  title   = {The canonical module of a Stanley--Reisner ring},
  journal = {J. Algebra},
  volume  = {86},
  year    = {1984},
  pages   = {272--281}
}

@article{BrunBrunsRomer,
  author  = {M. Brun and W. Bruns and T. R{\"o}mer},
  title   = {Cohomology of partially ordered sets and local cohomology of section rings},
  journal = {Adv. Math.},
  volume  = {208},
  year    = {2007},
  pages   = {210--235}
}

@article{AlvarezMontanerVahidi,
  author  = {J. {\`A}lvarez Montaner and A. Vahidi},
  title   = {Lyubeznik numbers of monomial ideals},
  journal = {Trans. Amer. Math. Soc.},
  volume  = {366},
  year    = {2014},
  pages   = {1829--1855},
  note    = {See also J. {\`A}lvarez Montaner, \emph{Lyubeznik table of sequentially Cohen--Macaulay rings}, Nagoya Math. J. \textbf{226} (2017)}
}

@book{ChazalDeSilvaGlisseOudot,
  author    = {F. Chazal and V. de Silva and M. Glisse and S. Oudot},
  title     = {The Structure and Stability of Persistence Modules},
  series    = {SpringerBriefs in Mathematics},
  publisher = {Springer},
  year      = {2016}
}

@article{CrawleyBoevey,
  author  = {W. Crawley-Boevey},
  title   = {Decomposition of pointwise finite-dimensional persistence modules},
  journal = {J. Algebra Appl.},
  volume  = {14},
  number  = {5},
  year    = {2015},
  pages   = {1550066}
}

@article{BotnanCrawleyBoevey,
  author  = {M. B. Botnan and W. Crawley-Boevey},
  title   = {Decomposition of persistence modules},
  journal = {Proc. Amer. Math. Soc.},
  volume  = {148},
  year    = {2020},
  pages   = {4581--4596},
  note    = {arXiv:1811.08946}
}

@article{wei2026commutative,
  author  = {G.-W. Wei},
  title   = {Commutative Algebra Meets Data Science: A New Paradigm in Mathematical Artificial Intelligence},
  journal = {Collections},
  volume  = {59},
  number  = {2},
  year    = {2026}
}

@article{suwayyid2025cakr,
  author  = {F. Suwayyid and Y. Hozumi and H. Feng and M. Zia and J. Wee and G.-W. Wei},
  title   = {{CAKR: Commutative} algebra k-mer representation of genomics},
  journal = {Nature Communication},
  year    = {2026},
  pages ={https://doi.org/10.1038/s41467-026-76429-z},
  url     = {https://doi.org/10.1038/s41467-026-76429-z}
}

@article{carlsson2009topology,
  title={Topology and data},
  author={Carlsson, Gunnar},
  journal={Bulletin of the American mathematical society},
  volume={46},
  number={2},
  pages={255--308},
  year={2009}
}

@article{SuwayyidWeiPSRT,
  author  = {F. Suwayyid and G.-W. Wei},
  title   = {Persistent {Stanley--Reisner} theory},
  journal = {Foundations of Data Science},
  volume  = {8},
  year    = {2026},
  pages   = {287--312},
  note    = {arXiv:2503.23482}
}

@article{SuwayyidWeiGraphs,
  author  = {F. Suwayyid and G.-W. Wei},
  title   = {Persistent commutative algebra on graphs and hypergraphs},
  journal = {Foundations of Data Science},
  year    = {2026},
  pages ={doi: 10.3934/fods.2026020},
  doi     = {10.3934/fods.2026020},
  note    = {arXiv:2512.17619, 2025}
}

@inproceedings{BendichCSEHM,
  author    = {P. Bendich and D. Cohen-Steiner and H. Edelsbrunner and J. Harer and D. Morozov},
  title     = {Inferring local homology from sampled stratified spaces},
  booktitle = {48th Annual IEEE Symposium on Foundations of Computer Science (FOCS'07)},
  year      = {2007},
  pages     = {536--546},
  doi       = {10.1109/FOCS.2007.33}
}

@inproceedings{BendichWangMukherjee,
  author    = {P. Bendich and B. Wang and S. Mukherjee},
  title     = {Local homology transfer and stratification learning},
  booktitle = {Proceedings of the 23rd Annual ACM-SIAM Symposium on Discrete Algorithms (SODA)},
  year      = {2012},
  pages     = {1355--1370}
}

@book{Munkres,
  author    = {J. R. Munkres},
  title     = {Elements of Algebraic Topology},
  publisher = {Addison-Wesley},
  address   = {Menlo Park, CA},
  year      = {1984}
}

@article{Lyubeznik,
  author  = {G. Lyubeznik},
  title   = {Finiteness properties of local cohomology modules (an application of $D$-modules to commutative algebra)},
  journal = {Invent. Math.},
  volume  = {113},
  number  = {1},
  year    = {1993},
  pages   = {41--55}
}

@article{HunekeSharp,
  author  = {C. L. Huneke and R. Y. Sharp},
  title   = {Bass numbers of local cohomology modules},
  journal = {Trans. Amer. Math. Soc.},
  volume  = {339},
  number  = {2},
  year    = {1993},
  pages   = {765--779}
}

@article{Miller,
  author  = {E. Miller},
  title   = {The {A}lexander duality functors and local duality with monomial support},
  journal = {J. Algebra},
  volume  = {231},
  number  = {1},
  year    = {2000},
  pages   = {180--234}
}

@article{Yanagawa,
  author  = {K. Yanagawa},
  title   = {{A}lexander duality for {S}tanley--{R}eisner rings and squarefree $\mathbb{N}^n$-graded modules},
  journal = {J. Algebra},
  volume  = {225},
  number  = {2},
  year    = {2000},
  pages   = {630--645}
}

\end{document}